\newtheorem{theorem}{Theorem}
\newtheorem{proposition}{Proposition}
\newtheorem{corollary}{Corollary}
\newtheorem{lemma}{Lemma}
\newtheorem*{lemma*}{Lemma}
\begin{document}

\title[The Ballet of the Triangle Centers on the Elliptic Billiard]{The Ballet of Triangle Centers\\on the Elliptic Billiard}

\author[Reznik]{Dan Reznik$^*$}
\address{Data Science Consulting\\
Rio de Janeiro, RJ, Brazil}
\email{dreznik@gmail.com}
\thanks{* \texttt{dreznik@gmail.com}. 1st author thanks IMPA for the opportunity to present this research at the 2019 Math Colloquium. 2nd author thanks CNPq for a Fellowship and Project PRONEX/CNPq/FAPEG 2017 10 26 7000 508. 3rd author thanks Federal University of Juiz de Fora for a 2018-2019 fellowship.}

\author[Garcia]{Ronaldo Garcia}
\address{Inst. de Matemática e Estatística\\
Univ. Federal de Goiás\\
Goiânia, GO, Brazil}
\email{ragarcia@ufg.br}

\author[Koiller]{Jair Koiller}
\address{Dept. de Matemática\\
Univ. Federal de Juiz de Fora\\
Juiz de Fora, MG, Brazil}
\email{jairkoiller@gmail.com}

\subjclass{51N20\;51M04\;51-04\;37-04}

\keywords{elliptic billiard, periodic trajectories, triangle center, loci, dynamic geometry.}

\date{}

\dedicatory{To Clark Kimberling, Peter Moses, and Eric Weisstein}

\begin{abstract}
The dynamic geometry of the family of 3-periodics in the Elliptic Billiard is mystifying. Besides conserving perimeter and the ratio of inradius-to-circumradius, it has a stationary point. Furthermore, its triangle centers sweep out mesmerizing loci including ellipses, quartics, circles, and a slew of other more complex curves. Here we explore a bevy of new phenomena relating to (i) the shape of 3-periodics and (ii) the kinematics of certain Triangle Centers  constrained to the Billiard boundary, specifically the non-monotonic motion some can display with respect to 3-periodics. Hypnotizing is the {\em joint} motion of two such non-monotonic Centers, whose many stops-and-gos are akin to a Ballet.
\end{abstract}

\maketitle

\section{Introduction}
\label{sec:intro}
Being uniquely integrable \cite{kaloshin2018}, the Elliptic Billiard (EB) is the {\em avis rara} of planar Billiards. As a special case of Poncelet's Porism \cite{dragovic88}, it is associated with a 1d family of $N$-periodics tangent to a confocal Caustic and of constant perimeter \cite{sergei91}. Its plethora of mystifying properties has been extensively studied, see \cite{rozikov2018-billiards} for a recent treatment.

Through the prism of classic triangle geometry, we initially explored the loci of their {\em Triangle Centers} (TCs) \cite{reznik2019-loci-gallery-orbit-exc}: e.g., the Incenter $X_1$, Barycenter $X_2$, Circumcenter $X_3$, etc., see summary below. The $X_i$ notation is after  Kimberling's Encyclopedia \cite{etc}, where thousands of TCs are catalogued. Appendix~\ref{app:trilinears} reviews basic TCs. Here we explore a bevy of curious behaviors displayed by the the family of 3-periodics, roughly divided into two categories (with an intermezzo):

\begin{enumerate}
    \item The {\em shape} of 3-periodics and/or TC loci: when are the former acute, obtuse, pythagorean, and the latter non-smooth, self-intersecting, non-compact? See for example this recent treatment of TCs at infinity  \cite{kimberling2020-poly-infinity}.
    \item The {\em kinematics} of EB-railed TCs: a handful of TCs\footnote{Some 50 out of 40 thousand in \cite{etc}.} is known to lie on the EB. As the family of 3-periodics is traversed, what is the nature of their motion (monotonicity, critical points, etc.). We further examine the {\em joint} motion of two EB-railed TCs which in some cases resembles a Ballet.
\end{enumerate}

In the intermezzo we introduce (i) a deceptively simple construction under which the EB bugles out the Golden Ratio, and (ii) a triangle closely related to 3-periodics\footnote{The Contact (or Intouch) Triangle of the Anticomplementary Triangle (ACT).}, whose vertices are dynamically clutched onto the EB.

Throughout the paper Figures will sometimes include a hyperlink to an animation in the format \cite[PL\#nn]{reznik2020-playlist-intriguing}, where nn is the entry into our video list on Table~\ref{tab:playlist} in Section~\ref{sec:conclusion}. Indeed, the beauty of most phenomena remain ellusive unless they are observed dynamically.

\paragraph{Recent Work} Two particularly striking observations have been \cite{reznik2020-intelligencer}:

\begin{enumerate}
    \item The Mittenpunkt $X_9$ is the black swan of all TCs: its locus is a {\em point} at the center of the EB\footnote{In Triangle parlance, the EB is the ``$X_9$-Centered Circumellipse''.} \cite[PL\#01]{reznik2020-playlist-intriguing}.
    \item The 3-periodic family conserves the ratio of Inradius-to-Circumradius. This implies the sum of its cosines is invariant. Suprisingly the latter holds for all $N$-periodics \cite{akopyan2020-invariants,bialy2020-invariants}.
\end{enumerate}

\noindent Other observations focused on the surprising elliptic locus of many a TC: the locus of the Incenter $X_1$ is an ellipse \cite{olga14}, as is that of the Excenters \cite{garcia2019-incenter}. The latter is similar to a rotated locus of $X_1$, Figure~\ref{fig:incenter-loci}. Also elliptic are the loci of the Barycenter $X_2$ \cite{sergei2016proj}, Circumcenter $X_3$ \cite{corentin19}, Orthocenter $X_4$ \cite{garcia2019-incenter}, Center $X_5$ of the Nine-Point Circle  \cite{garcia2020-ellipses}. See Figure~\ref{fig:x12345-feuer-combo} (left). Recently we showed that out of the first 100 Kimberling Centers in \cite{etc}, exactly 29 produce elliptic loci \cite{garcia2020-ellipses}. This is quite unexpected given the non-linearities involved.

Other observations, many which find parallels in Triangle Geometry, included (i) the locus of the Feuerbach Point $X_{11}$ is on the Caustic\footnote{The Inconic centered on the Mittenpunkt $X_9$ which passes through the Extouchpoints is known as the {\em Mandart Inellipse} \cite{mw}. By definition, an Inconic is internally tangent to the sides, so it must be the Caustic.}, as is that of the Extouchpoints, though these move in opposite directions; (ii) the locus of $X_{100}$, the anticomplement of $X_{11}$, is on the EB \cite{garcia2020-new-properties}. See Figure~\ref{fig:x12345-feuer-combo} (right); (iii) the locus of vertices of Intouch, Medial and Feuerbach Triangles are all non-elliptic. See Figure~\ref{fig:non-elliptic-vertex}.

We still don't understand how loci ellipticiy or many of the phenomena below correlate to the Trilinears of a given TC.

\begin{figure}[b]
    \centering
    \includegraphics[width=\textwidth]{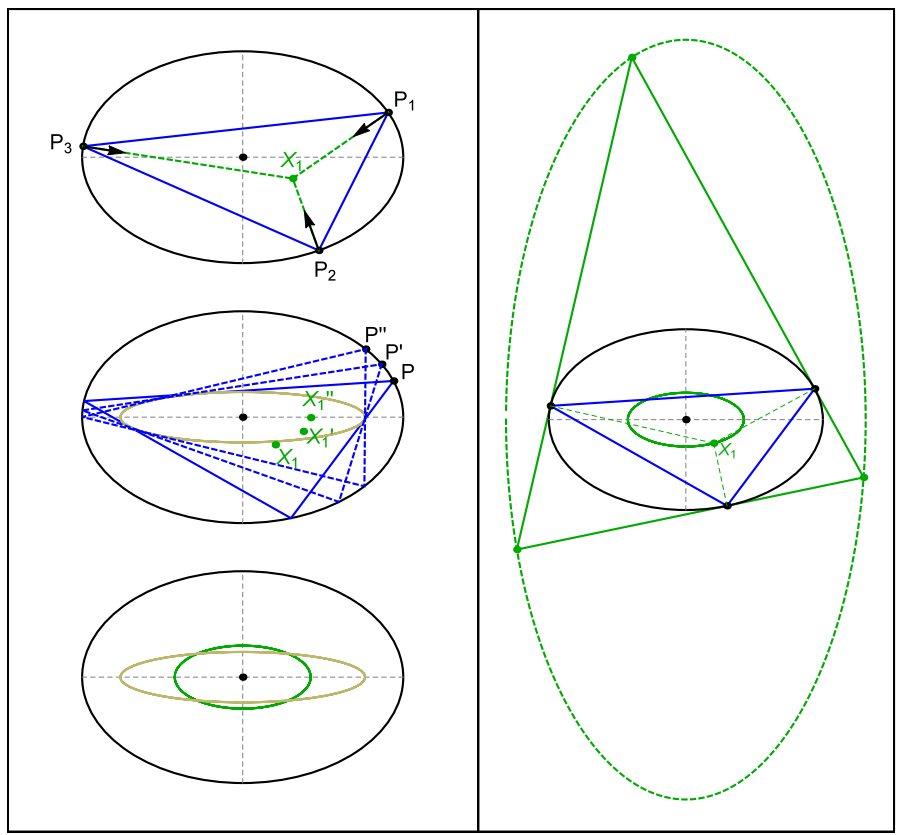}
    \caption{\textbf{Left Top}: A 3-periodic and its Incenter $X_1$: where bisectors meet. \textbf{Left Mid}: Three 3-periodics, each identified by a starting vertex $P,P',P''$, and their Incenters $X_1,X_1',X_1''$. Also shown is the  confocal Caustic (brown) which is the stationary {\em Mandart Inellipse} \cite{mw} of the 3-periodic family. \textbf{Left Bot}: the locus of $X_1$ over the 3-periodic family is an ellipse (green). Also shown is the Caustic (brown). \textbf{Right}: the Excentral Triangle (green) of a 3-periodic (blue). The locus of its vertices (the Excenters) is an ellipse (dashed green) similar to a perpendicular copy of the locus of the Incenter (solid green inside the EB). This is the stationary {\em MacBeath} Circumellipse of the Excentral Triangle \cite{mw}, centered on the latter's $X_6$ (i.e., $X_9$) \textbf{Video}:\cite[PL\#01,04]{reznik2020-playlist-intriguing}.}
    \label{fig:incenter-loci}
\end{figure}

\begin{figure}
\centering
\includegraphics[width=\linewidth]{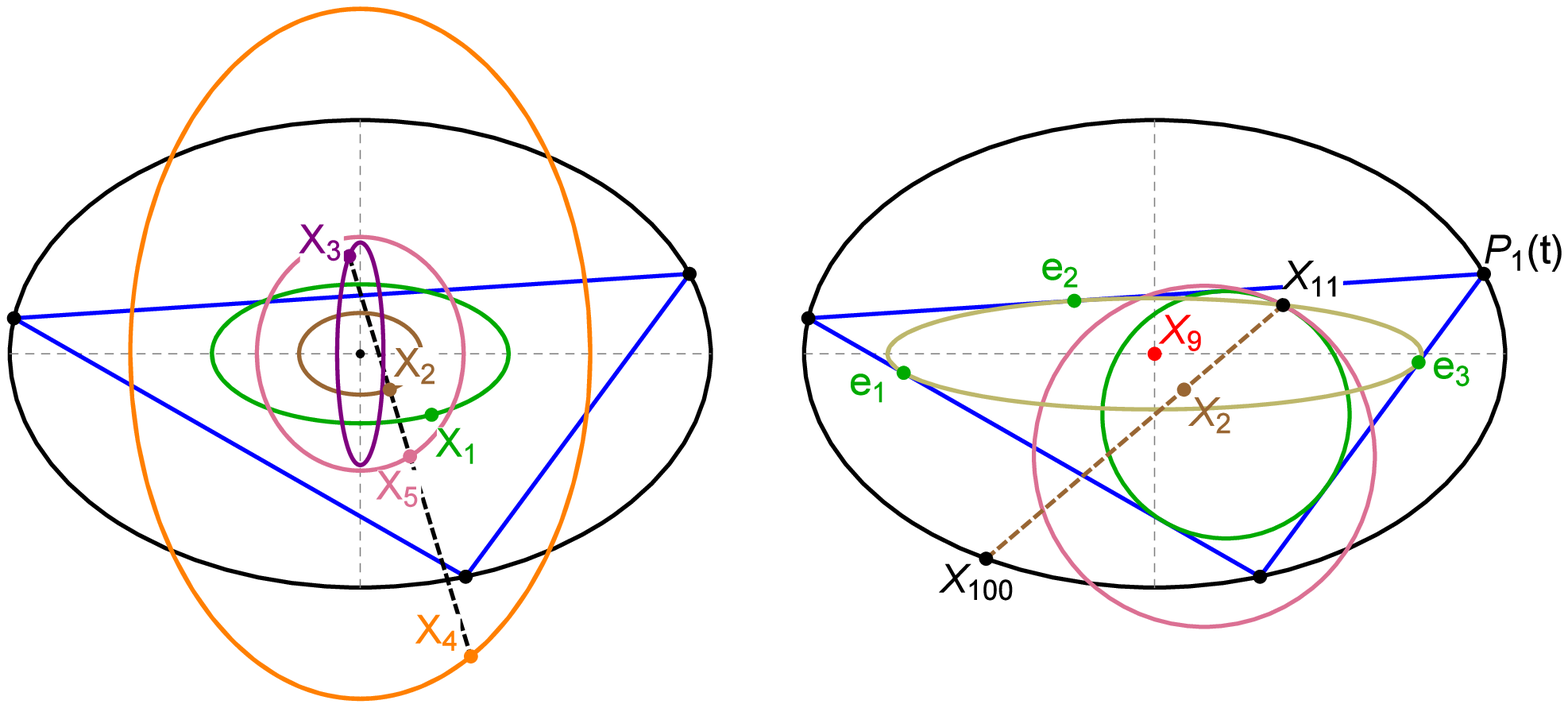}
\caption{\textbf{Left}: The loci of Incenter $X_1$ (green), Barycenter $X_2$ (brown), Circumcenter $X_3$ (purple), Orthocenter $X_4$ (orange), and Center of the 9-Point Circle $X_5$ (pink), are all ellipses, \href{https://youtu.be/sMcNzcYaqtg}{Video} \cite[PL\#05]{reznik2020-playlist-intriguing}. Also shown is the {\em Euler Line} (dashed black) which for any triangle, passes through all of $X_i$, $i=1...5$ \cite{mw}. \textbf{Right}: The locus of $X_{11}$, where the Incircle (green) and 9-Point Circle (pink) meet, is the Caustic (brown), also swept (in the opposite direction) by the Extouchpoints $e_i$. $X_{100}$ (double-length reflection of $X_{11}$ about $X_2$) sweeps the EB. $X_9$ is the black swan of all points: it is stationary at the EB center \cite{reznik2020-intelligencer}. \textbf{Video:} \cite[PL\#05,07]{reznik2020-playlist-intriguing}.}
\label{fig:x12345-feuer-combo}
\end{figure}

\begin{figure}
    \centering
    \includegraphics[width=\linewidth]{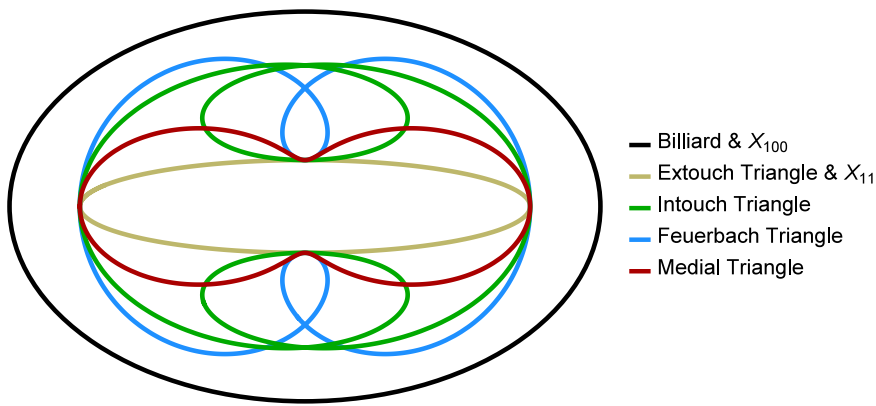}
    \caption{The loci of the Intouch (green), Feuerbach (blue), and Medial (red) Triangles which are all non-elliptic. An exception is the locus of the Extouchpoints (brown), who sweep the elliptic Caustic (as does $X_{11}$ not shown). \cite[PL\#07]{reznik2020-playlist-intriguing}}
    \label{fig:non-elliptic-vertex}
\end{figure}

\clearpage
\section{First Movement: Shape}
\label{sec:act-I}
Let the boundary of the EB be given by, $a>b>0$:

\begin{equation}
f(x,y)=\left(\frac{x}{a}\right)^2+\left(\frac{y}{b}\right)^2=1
\label{eqn:billiard-f}
\end{equation}

\noindent Below $c^2=a^2-b^2$, and $\delta=\sqrt{a^4-a^2 b^2+b^4}$.

Throughout this paper we assume one vertex $P_1(t)=(x_1,y_1)$ of a 3-periodic is parametrized as $P_1(t)=[a\cos{t},b\sin{t}]$. Explicit expressions\footnote{Their coordinates involve double square roots on $x_1$ so these points are constructible by ruler and compass.} for the locations of $P_2(t)$ and $P_3(t)$ under this specific parametrization are given in \cite{garcia2019-incenter}.

\subsection{Can 3-periodics be obtuse? Pythagorean?}
\label{sec:x4}
The locus of the Orthocenter $X_4$ is an ellipse of axes $a_4,b_4$ similar to a rotated copy of the EB. These are given by \cite {garcia2020-ellipses}:

\begin{equation*}
\left(a_4,b_4\right)=\left(\frac{k_4}{a},\frac{k_4}{b}\right),\;  k_4=\frac{  ({a}^{2}+{b}^{2})\delta-2\,{a}^{2}{b}^{2} }{c^2}    
\end{equation*}

\noindent Referring to Figure~\ref{fig:orthocenter_loci}, let $\alpha_4=\sqrt{2\,\sqrt {2}-1}\;{\simeq}\;1.352$.

\begin{proposition}
If $a/b=\alpha_4$, then $b_4=b$, i.e., the top and bottom vertices of the locus of $X_4$ coincide with the EB's top and bottom vertices.
\end{proposition}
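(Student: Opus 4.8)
The plan is to work directly from the closed-form expression for $b_4$ already supplied in the excerpt. Since $b_4 = k_4/b$ with $k_4 = \bigl((a^2+b^2)\delta - 2a^2b^2\bigr)/c^2$ and $c^2 = a^2-b^2$, $\delta = \sqrt{a^4-a^2b^2+b^4}$, the equation $b_4 = b$ is equivalent to $k_4 = b^2$, i.e.
\begin{equation*}
(a^2+b^2)\,\delta - 2a^2b^2 = b^2(a^2-b^2).
\end{equation*}
First I would normalize by setting $b=1$ and $u = a^2/b^2 = (a/b)^2 > 1$, so that $\delta = \sqrt{u^2-u+1}$ and the relation becomes $(u+1)\sqrt{u^2-u+1} = 2u + (u-1) = 3u-1$... wait, more carefully: $2a^2b^2 + b^2(a^2-b^2) = 2u + (u-1) = 3u - 1$, so the equation is $(u+1)\sqrt{u^2-u+1} = 3u-1$.

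Next I would square both sides (legitimate once I note $3u-1>0$ for $u>1$) to get $(u+1)^2(u^2-u+1) = (3u-1)^2$. Expanding the left side, $(u+1)^2(u^2-u+1) = (u^2+2u+1)(u^2-u+1)$; since $(u^2+1+2u)(u^2+1-u) = (u^2+1)^2 + u(u^2+1) - 2u^2 = u^4 + 2u^2 + 1 + u^3 + u - 2u^2 = u^4 + u^3 + u + 1$. The right side is $9u^2 - 6u + 1$. So the polynomial equation is $u^4 + u^3 + u + 1 = 9u^2 - 6u + 1$, i.e. $u^4 + u^3 - 9u^2 + 7u = 0$, i.e. $u(u^3 + u^2 - 9u + 7) = 0$. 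Since $u\neq 0$, we need $u^3 + u^2 - 9u + 7 = 0$. I would then observe that $u=1$ is a root ($1+1-9+7=0$), factor out $(u-1)$ to get $(u-1)(u^2+2u-7)=0$, and discard $u=1$ (which gives $a=b$, a circle, excluded) as well as the negative root $u = -1-2\sqrt 2 < 0$, leaving $u = -1 + 2\sqrt 2 = 2\sqrt 2 - 1$. Hence $(a/b)^2 = 2\sqrt 2 - 1$, so $a/b = \sqrt{2\sqrt 2 - 1} = \alpha_4$, as claimed.

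The one genuine subtlety — the only place where care is needed rather than routine algebra — is the squaring step: I must confirm that no spurious root is introduced and, conversely, that the root $u = 2\sqrt2-1$ actually satisfies the original unsquared equation $(u+1)\sqrt{u^2-u+1}=3u-1$ with the correct sign. Since $u = 2\sqrt2 - 1 \approx 1.828 > 1$, we have $3u-1 \approx 4.49 > 0$ and $u+1 > 0$, so both sides are positive and squaring is reversible on this branch; a quick numerical check ($b_4 \approx b$ at $a/b\approx 1.352$) seals it. I would also remark at the end that a symmetric computation could instead be phrased as "$b_4 = b \iff k_4 = b^2$" and that the companion fact $a_4 = a \iff k_4 = a^2$ has no admissible solution, which is why only the top/bottom vertices (not the left/right ones) can be made to coincide — though this last remark is optional and only included if it fits the paper's flow.
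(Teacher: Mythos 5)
Your proposal is correct and follows essentially the same route as the paper: reduce $b_4=b$ to a polynomial condition in $u=(a/b)^2$ and solve, the paper's stated equivalent $a^4+2a^2b^2-7b^4=0$ being exactly your factor $u^2+2u-7=0$ after discarding $u=0$ and $u=1$ (which your hypothesis $a>b>0$ excludes). You merely spell out the squaring and factorization steps that the paper compresses into one sentence, and your check that squaring introduces no spurious branch is a harmless bonus.
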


\begin{proof}
The equation $b_4=b$ is equivalent to $a^4+2a^2b^2-7b^4=0.$ Therefore, as $a>b>0$, it follows that $a/b=\sqrt{2\,\sqrt {2}-1}.$
\end{proof}

\noindent Let $\alpha_4^*$ be the positive root of
${x}^{6}+{x}^{4}-4\,{x}^{3}-{x}^{2}-1=0$, i.e.,
$\alpha_4^{*}={\simeq}\;1.51$. 

\begin{proposition}
When $a/b=\alpha_4^{*}$, then $a_4=b$ and $b_4=a$, i.e., the locus of $X_4$ is identical to a rotated copy of Billiard. 
\end{proposition}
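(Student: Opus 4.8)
The plan is to reduce both claimed equalities to a single scalar condition on the aspect ratio $u = a/b$. Since the excerpt records $(a_4,b_4) = (k_4/a,\,k_4/b)$, we have $a_4/b_4 = b/a$ identically, so $a_4 = b \Leftrightarrow k_4 = ab \Leftrightarrow b_4 = a$; in particular the two equalities hold simultaneously exactly when $k_4 = ab$. Substituting $k_4 = \big((a^2+b^2)\delta - 2a^2b^2\big)/c^2$ with $c^2 = a^2-b^2$, the condition $k_4 = ab$ becomes
\[
(a^2+b^2)\,\delta \;=\; ab(a^2-b^2) + 2a^2b^2 .
\]

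Next I would clear the radical. For $a>b>0$ the right-hand side is positive (it equals $ab(a^2+2ab-b^2)$, with $a^2+2ab-b^2>0$), and the left-hand side is positive as well, so squaring is an equivalence and introduces no spurious branch. Using $\delta^2 = a^4-a^2b^2+b^4$ and homogenizing (divide through by $b^3$, then square), I expect the routine expansion to collapse to
\[
u^8 - 4u^5 - 2u^4 + 4u^3 + 1 = 0, \qquad u = a/b .
\]
The key algebraic point is that this octic factors as $(u^2-1)\big(u^6 + u^4 - 4u^3 - u^2 - 1\big)$; the factor $u^2-1$ corresponds to the degenerate circular case $a=b$, which is excluded since $a>b$ forces $u>1$. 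Hence $u$ is a root of the sextic $x^6 + x^4 - 4x^3 - x^2 - 1$, i.e. $u = \alpha_4^*$ by definition.

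To close the argument cleanly I would verify that this sextic has a single admissible root: Descartes' rule of signs on $x^6 + x^4 - 4x^3 - x^2 - 1$ shows exactly one sign change, hence a unique positive real root, and its value at $x=1$ (namely $-4$) and at $x=2$ (namely $43$) localizes it in $(1,2)$, consistent with $a>b$ and with the stated estimate $\alpha_4^* \simeq 1.51$. Running the chain backwards at this root (where $u>1$ makes $u^3+2u^2-u>0$, so the un-squared equation is recovered) yields $k_4 = ab$, and therefore the $X_4$-locus — the origin-centered, axis-aligned ellipse with semi-axes $(k_4/a,k_4/b)=(b,a)$ — is the EB turned through a right angle.

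The only genuine obstacle is bookkeeping: performing the squaring and homogenization so that the octic appears in a form where the factor $x^2-1$ is visible. The structural steps — the equivalence $k_4=ab \Leftrightarrow a_4=b \Leftrightarrow b_4=a$, the positivity check that legitimizes squaring, and the uniqueness of the positive root — are all short.
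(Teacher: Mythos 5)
Your proposal is correct and follows essentially the same route as the paper: reduce the condition to the sextic $x^6+x^4-4x^3-x^2-1=0$ in $x=a/b$ and invoke uniqueness of its positive root (your expansion checks out: $u^8-4u^5-2u^4+4u^3+1=(u^2-1)(u^6+u^4-4u^3-u^2-1)$). Your write-up is actually tighter than the paper's in two places --- you justify the squaring step and isolate the spurious factor $(u^2-1)$ explicitly, and you replace the paper's appeal to ``graphic analysis'' with Descartes' rule of signs, which gives a genuine proof that the positive root is unique.
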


\begin{proof}
The condition $a_4=b$, or equivalently $b_4=a$, is defined by $a^6+a^4b^2-4a^3b^3-a^2b^4-b^6=0$. Graphic analysis shows that ${x}^{6}+{x}^{4}-4\,{x}^{3}-{x}^{2}-1=0$ has only one positive real root which we call $\alpha_4^*$.
\end{proof}

\begin{theorem}
If $a/b<\alpha_4$ (resp. $a/b>\alpha_4$) the 3-periodic family will not (resp. will) contain obtuse triangles.
\end{theorem}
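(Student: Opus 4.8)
The plan is to control the position of the Orthocenter $X_4$ relative to \emph{both} the 3-periodic and the EB, via the classical trichotomy: a triangle's orthocenter lies in its interior, at its right-angled vertex, or in its exterior, according as the triangle is acute, right, or obtuse. Since a 3-periodic $T(t)$ is inscribed in the EB it is contained in the closed region bounded by the EB, so: (L$_1$) if $X_4(t)$ lies \emph{outside} the EB then $T(t)$ is obtuse; and (L$_2$) if $X_4(t)$ never lies \emph{on} the EB, then no 3-periodic is right-angled (a right 3-periodic would have $X_4$ at one of its vertices). We use the fact \cite{garcia2020-ellipses} that the locus of $X_4$ is the ellipse $\mathcal{E}_4:\ x^2/a_4^2+y^2/b_4^2=1$ with $a_4=k_4/a<b_4=k_4/b$, together with the Proposition above relating $b_4=b$ to $a/b=\alpha_4$. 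Because $b_4=k_4/b\to 0$ as $a/b\to 1^{+}$ and $b_4\to\infty$ as $a/b\to\infty$, continuity sharpens that Proposition to: $b_4<b$ for $1<a/b<\alpha_4$, and $b_4>b$ for $a/b>\alpha_4$. We also note $a_4<a$ for every $a>b$, equivalent to $\delta<a^2$, which holds since $a^4-\delta^2=a^2b^2-b^4=b^2c^2>0$.

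\emph{If $a/b>\alpha_4$, the family contains obtuse 3-periodics.} Take $t=\pi/2$, so $P_1=(0,b)$. The EB, the confocal caustic, and the point $(0,b)$ are all fixed by the reflection $\sigma:(x,y)\mapsto(-x,y)$; as there is a unique 3-periodic through a given boundary point, $\sigma$ maps $T(\pi/2)$ to itself, fixing $P_1$ and swapping $P_2,P_3$. Hence $T(\pi/2)$ is isosceles, symmetric about the $y$-axis, so $X_4(\pi/2)$ lies on the $y$-axis; being also on $\mathcal{E}_4$, it equals $(0,\pm b_4)$. Since $a/b>\alpha_4$ forces $b_4>b$, this point is outside the EB, and (L$_1$) makes $T(\pi/2)$ obtuse.

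\emph{If $a/b<\alpha_4$, no 3-periodic is obtuse.} Now $b_4<b$ and $a_4<a$, so for any point of $\mathcal{E}_4$ we get $x^2/a^2+y^2/b^2<x^2/a_4^2+y^2/b_4^2=1$, i.e. $\mathcal{E}_4$ lies strictly inside the EB; hence $X_4(t)$ never meets the EB and, by (L$_2$), no 3-periodic is right-angled. Moreover $T(0)$ is acute: it is isosceles with apex $A=(a,0)$ and vertical base in $\{x=x_0\}$ for some $x_0<a$, its orthocenter lies on the axis of symmetry (the $x$-axis), and a right or obtuse apex angle would place that orthocenter at or beyond $A$ (abscissa $\ge a$); but the orthocenter lies on $\mathcal{E}_4$, so its abscissa is $\pm a_4$ with $a_4<a$ --- a contradiction. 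Since the apex is the only angle of an isosceles triangle that can be non-acute, $T(0)$ is acute. Finally, the 3-periodics form a connected family, each is non-degenerate, and $t\mapsto(\text{largest angle of }T(t))$ is continuous; as this never equals $\pi/2$ and is $<\pi/2$ at $t=0$, it is $<\pi/2$ throughout, so every 3-periodic is acute.

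\emph{Main obstacle.} The routine parts are the orthocenter trichotomy and the two algebraic inequalities. The one genuinely delicate step is upgrading the \emph{equality} $b_4=b\Leftrightarrow a/b=\alpha_4$ coming from the Proposition to the \emph{signed} statement $b_4\gtrless b\Leftrightarrow a/b\gtrless\alpha_4$: this needs the two limiting values of $b_4$ (at $a/b\to1^{+}$ and $a/b\to\infty$) plus continuity, or equivalently a direct check that $b_4$ is monotone increasing in $a/b$. Care is also needed for the uniqueness-and-symmetry argument pinning $X_4(\pi/2)$ to the $y$-axis, and for the non-degeneracy of all 3-periodics, which is what makes the final connectedness/continuity step legitimate.
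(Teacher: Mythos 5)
Your proof is correct and rests on the same central mechanism as the paper's: the acute/right/obtuse trichotomy for the position of the Orthocenter combined with the elliptic locus of $X_4$ and the threshold $a/b=\alpha_4$ at which that locus touches the EB. You supply several details the paper leaves implicit --- the signed inequality $b_4\gtrless b$ on either side of $\alpha_4$, an explicit obtuse witness at $t=\pi/2$, and the continuity/connectedness step ruling out obtuse triangles when $a/b<\alpha_4$ --- which strengthens rather than changes the argument.
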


\begin{proof}
If the 3-periodic is acute, $X_4$ is in its interior, therefore also internal to the EB. If the 3-periodic is a right triangle, $X_4$ lies on the right-angle vertex and is therefore on the EB. If the 3-periodic is obtuse, $X_4$ lies on exterior wedge between sides incident on the obtuse vertex (feet of altitudes are exterior). Since the latter is on the EB, $X_4$ is exterior to the EB.
\end{proof}

\begin{figure}[H]
    \centering
    \includegraphics[width=\textwidth]{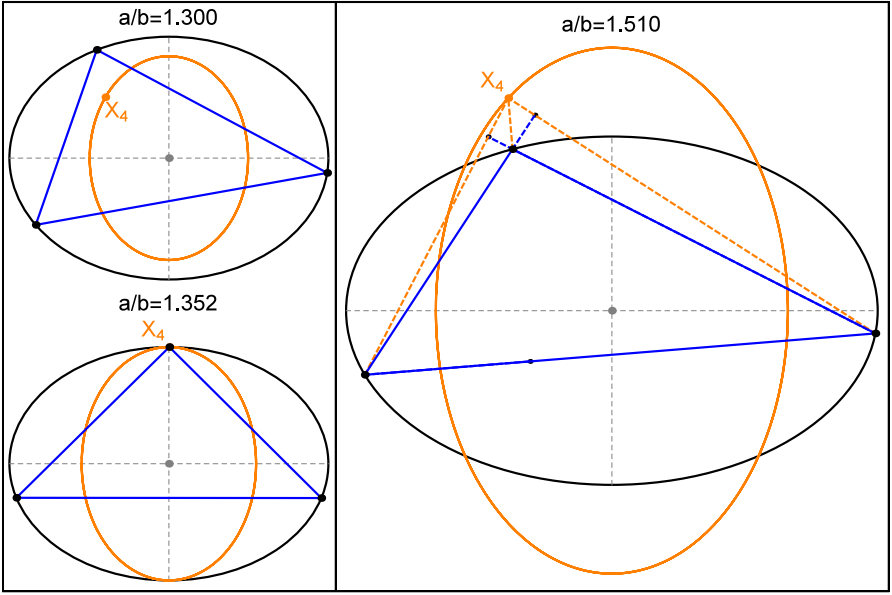}
    \caption{Let
    $\alpha_4=\sqrt{2\,\sqrt {2}-1}\;{\simeq}\;1.352$
    and $H$ be the elliptic locus of $X_4$ (orange), similar to a rotated copy of the EB (black). \textbf{Top Left}: $a/b<\alpha_4$, $H$ is interior to the EB and all 3-periodics (blue) are acute. \textbf{Bot Left}: at $a/b=\alpha_4$, $H$ is tangent to the top and bottom vertices of the EB. The 3-periodic shown is a right triangle since one vertex is at the upper EB vertex where $X_4$ currently is. \textbf{Right}: at $a/b>\alpha_4$, the 3-periodic family will contain both acute and obtuse 3-periodics, corresponding to $X_4$ interior (resp. exterior) to the EB. For any obtuse 3-periodic, $X_4$ will lie within the wedge between sides incident upon the obtuse angle and exterior to the 3-periodic, i.e., exterior to the EB. For the particular aspect ratio shown ($a/b=1.51$), $H$ is identical to a $90^{\circ}$-rotated copy of the EB.}
    \label{fig:orthocenter_loci}
\end{figure}

\subsection{Can a locus be non-smooth?}
\label{sec:orthic-incenter}
Loci considered thus far have been smooth, regular curves.

\begin{proposition}
If $a/b>\alpha_4$ the locus of the Incenter of the 3-periodic's Orthic Triangle comprises four arcs of ellipses, connected at four corners.
\end{proposition}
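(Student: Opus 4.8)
The plan is to exploit the known relationship between the Orthic triangle of the 3-periodic and the 3-periodic family itself. The key classical fact is that when the reference triangle $T$ is acute, the Incenter of its Orthic triangle coincides with the Orthocenter $X_4$ of $T$; when $T$ is obtuse, say at vertex $V$, the Orthic triangle's incenter is instead the vertex $V$ of $T$ (one of the Orthic triangle's excenters becomes the ``incenter'' in the obtuse case, because the Orthic triangle's angles and the altitude configuration degenerate/reflect). So the locus we want is a piecewise object: along the sub-family of acute 3-periodics it agrees with the locus of $X_4$, which by the result quoted from \cite{garcia2020-ellipses} is an arc of the ellipse $H$ with semi-axes $(k_4/a, k_4/b)$; along the sub-family of obtuse 3-periodics it agrees with the relevant vertex of the 3-periodic, which lies on the EB itself, so it traces arcs of the ellipse \eqref{eqn:billiard-f}.

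First I would establish (or cite) the triangle-geometry lemma: for a triangle with all acute angles the incenter of the orthic triangle is $X_4$, and for an obtuse triangle it is the obtuse vertex; I would also note that the transition happens exactly at right triangles, where $X_4$ sits on the right-angle vertex, which lies on the EB. Second, I would invoke the previous theorem in the excerpt: since $a/b>\alpha_4$, the family genuinely contains both acute and obtuse 3-periodics, and the acute/right/obtuse character changes as $t$ varies; by continuity and the symmetry of the configuration (the EB has two axes of symmetry, and the 3-periodic family is invariant under the corresponding reflections), the parameter circle $t\in[0,2\pi)$ splits into arcs of acute triangles alternating with arcs of obtuse triangles, with the right-triangle parameters as the boundary points. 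Counting these: by the $x$- and $y$-axis symmetries one expects the obtuse episodes to come in a symmetric pattern, and a short analysis of when a vertex of the 3-periodic produces an obtuse angle (equivalently, when $X_4$ exits the EB, i.e. when the point $X_4(t)$ on ellipse $H$ lies outside \eqref{eqn:billiard-f}) shows there are exactly four such transition parameters, hence the locus is assembled from four smooth arcs.

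Third, I would identify the arcs concretely: on each acute arc the locus point runs along $H$ (an honest elliptic arc since $H$ is an ellipse), and on each obtuse arc it runs along the EB boundary (again an elliptic arc). The ``four corners'' are precisely the images of the right-triangle parameters: there the locus point is simultaneously on $H$ and on the EB (since at a right triangle $X_4$ is the right-angle vertex $\in$ EB), so the arc from $H$ and the arc from the EB meet there; that they meet at a genuine corner (tangent lines differ) rather than smoothly is because the two ellipses $H$ and the EB are not tangent but cross at those points — this can be checked from the explicit semi-axes, since $H$ is a rotated-and-scaled copy of the EB and the two curves intersect transversally at the four shared points. Assembling: four corners, and between consecutive corners one arc, alternating between an arc of $H$ and an arc of the EB, giving four arcs of ellipses total.

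The main obstacle I anticipate is the bookkeeping in the second step: rigorously showing that the acute/obtuse alternation produces \emph{exactly} four transition points (not, say, eight) and that the resulting arcs are non-degenerate. This reduces to analyzing the sign of a single explicit trigonometric-polynomial function of $t$ (e.g. the largest-angle condition, or equivalently $f(X_4(t))-1$ where $f$ is from \eqref{eqn:billiard-f}), using the reflection symmetries $t\mapsto -t$ and $t\mapsto \pi - t$ to cut the work to a quarter-period; I would expect this function to change sign exactly once per quarter-period, giving the four corners. Verifying the corners are non-smooth (transversal crossing of $H$ and the EB) is a finite computation with the explicit axes and should not pose difficulty, and the identification of the incenter-of-orthic with $X_4$ / obtuse vertex is standard, so the combinatorial count is where the real care is needed.
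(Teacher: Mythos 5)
Your proposal follows essentially the same route as the paper: the classical lemma that the Orthic's Incenter equals $X_4$ for acute triangles and is pinned to the obtuse vertex for obtuse ones, so the locus alternates between arcs of the $X_4$-locus ellipse and arcs of the EB, meeting at the four corners where those two ellipses cross (the right-triangle configurations). If anything you are more careful than the paper, which simply tabulates the acute/right/obtuse regimes and reads the four-arc structure off the figure without the transversality or transition-counting checks you flag.
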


To see this, let $T$ be a triangle, $T_h$ its Orthic\footnote{Its vertices are the feet of the altitudes.}, and $I_h$ be the latter's Incenter. It is well-known that if $T$ is acute $I_h$ coincides with $T$'s Orthocenter $X_4$. However, for obtuse $T$:

\begin{lemma*}
$T_h$ has to vertices outside of $T$, and $I_h$ is ``pinned'' to the obtuse vertex \cite[Chapter 1]{coxeter67}.
\end{lemma*}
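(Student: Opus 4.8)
\medskip\noindent\textbf{Proof sketch.}
Put the obtuse angle of $T=ABC$ at $A$, and let $H_A,H_B,H_C$ be the feet of the altitudes from $A,B,C$, so that $T_h=H_AH_BH_C$. The plan is first to locate the three feet, then to identify $I_h$ by passing to the orthocentric system of $T$. For the feet, recall the elementary fact that the foot of the perpendicular from a vertex onto the opposite side \emph{line} falls in the interior of that side exactly when the two angles adjacent to it are acute, and otherwise lands on the extension beyond the obtuse vertex. Since $\angle B$ and $\angle C$ are acute, $H_A$ lies strictly between $B$ and $C$, hence on $\partial T$; but $H_B$ and $H_C$, the feet onto lines $CA$ and $AB$, fall on the extensions of those sides beyond $A$, so both are exterior to $T$. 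That gives the two exterior vertices.

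For the position of $I_h$, I would work inside the orthocentric system $\{A,B,C,H\}$ with $H=X_4(T)$, where each of the four points is the orthocenter of the triangle spanned by the other three. The crucial observation is that the four triangles $ABC,\ HBC,\ HCA,\ HAB$ all share the \emph{same} orthic triangle $T_h$: in $\triangle HBC$ the altitude from $H$ to $BC$ runs along line $AH_A$ and still has foot $H_A$, while the altitude from $B$ is perpendicular to line $HC$ and therefore coincides with line $AB$, so its foot is $H_C$, and symmetrically the third foot is $H_B$. A short angle count shows $\triangle HBC$ has angles $\pi-A,\ \tfrac{\pi}{2}-B,\ \tfrac{\pi}{2}-C$, all acute precisely because $\angle A$ is obtuse (and $\angle B,\angle C$ are acute). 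Applying the classical acute-case statement quoted just above --- the orthocenter of an acute triangle is the incenter of its orthic triangle --- to the acute triangle $\triangle HBC$ yields $I_h=X_4(\triangle HBC)$, and the orthocentric-system property identifies $X_4(\triangle HBC)$ with $A$. Hence $I_h$ is pinned to the obtuse vertex.

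The one subtle point --- and the reason the conclusion is not simply ``$I_h=X_4(T)$ always'' --- is the internal-versus-external bisector bookkeeping: the three altitude lines of $T$ are forever the angle bisectors of $T_h$, but which of them are the \emph{internal} bisectors switches as $T$ crosses from acute to obtuse, which is exactly what slides $I_h$ off $X_4(T)$ and onto the obtuse vertex. Routing the argument through the orthocentric system quarantines this into the single transparent claim ``exactly one of $ABC,HBC,HCA,HAB$ is acute,'' which is immediate from the angle formula above. A self-contained alternative would instead chase angles in the cyclic quadrilaterals $BH_CH_BC$ (circle on diameter $BC$) and $AH_CHH_B$ (circle on diameter $AH$) to read off the signs of the orthic angles $|\pi-2A|,\,2B,\,2C$ and decide which feet see internal bisectors; I expect that bookkeeping, rather than any deeper geometry, to be the main obstacle.
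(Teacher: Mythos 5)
Your proof is correct and follows essentially the same route as the paper's (Lemma~2 in Appendix~B): your auxiliary acute triangle $HBC$ is exactly the paper's $T_e=A\,X_4\,C$ up to relabeling, both arguments identify $T_h$ as the common orthic of $T$ and this auxiliary triangle, and both conclude by applying the acute-case (Fagnano) lemma to it so that the obtuse vertex, being the orthocenter of the auxiliary triangle, is the incenter of $T_h$. Your explicit angle count $\pi-A,\ \tfrac{\pi}{2}-B,\ \tfrac{\pi}{2}-C$ verifying that $HBC$ is acute is a welcome addition, since the paper only records that fact afterwards in a corollary.
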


This is restated in Appendix~\ref{app:orthic-incenter} as Lemma~\ref{lem:pinned} followed by a proof. This curious phenomenon is illustrated in Figure~\ref{fig:orthic-incenter}. 

\begin{figure}[H]
    \centering
    \includegraphics[width=.75\textwidth]{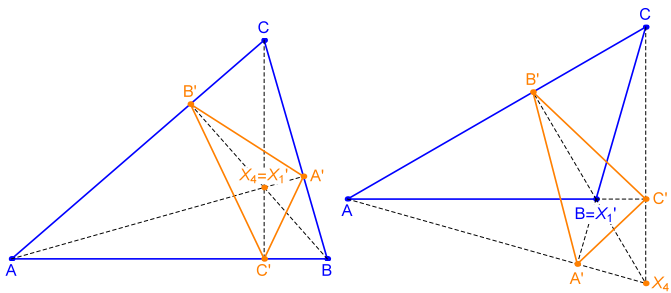}
    \caption{\textbf{Left}: If $T=ABC$ is acute (blue), its Orthic $T'=A'B'C'$ is the so-called Fagnano Triangle \cite{rozikov2018-billiards}, whose properties include: (i) inscribed triangle of minimum perimeter, (ii) a 3-periodic of $T$, i.e., the altitudes of $T$ are bisectors of $T'$, i.e., the Orthic Incenter $X_1'$ coincides with the Orthocenter $X_4$. \textbf{Right}: If $T$ is obtuse, two of the Orthic's vertices lie outside $T$, and $X_4$ is exterior to $T$. $T'$ is the Orthic of {\em both} $T$ and {\em acute} triangle $T_e=A{X_4}C$. So the Orthic is the latter's Fagnano Triangle, i.e., $B$ is where both altitudes and bisectors meet. The result is that if $T$ is obtuse at $B$, the Incenter $X_1'$ of the Orthic is $B$. \textbf{Video}: \cite[PL\#06]{reznik2020-playlist-intriguing}}
    \label{fig:orthic-incenter}
\end{figure}

Assume $a/b>\alpha_4$. Since the 3-periodic family contains both acute and obtuse triangles, the locus of $I_h$ transition between acute and obtuse regimes:

\begin{center}
\begin{tabular}{r|c|l}
 3-periodic & $X_4$ wrt EB & $I_h$ location \\ 
 \hline
 acute & interior & Orthocenter $X_4$ \\  
 right triangle & on it & right-angle vertex \\
obtuse & external (3-periodic Excenter) & obtuse vertex, on EB 
\end{tabular}
\end{center}

In turn, this yields a locus for $I_h$ consisting of four elliptic arcs connected by their endpoints in four corners, Figure~\ref{fig:orthic_incenter_locus}. Notice top and bottom (resp. left and right) arcs belong to the EB (resp. $X_4$ locus).

\begin{figure}
    \centering
    \includegraphics[width=\textwidth]{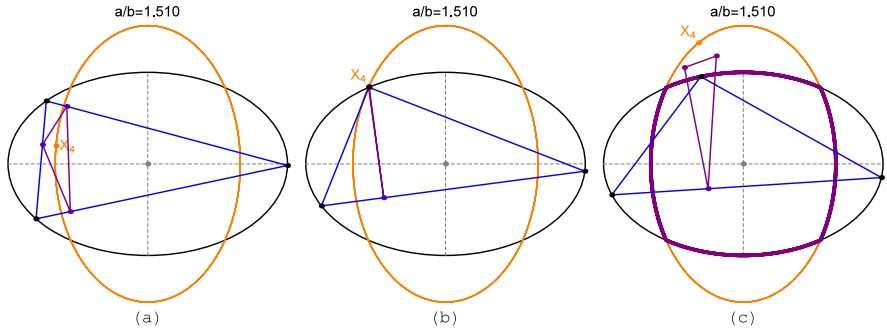}
    \caption{An $a/b>\alpha_4$ EB is shown (black). Let $T$ and $T_h$ be the 3-periodic and its Orthic Triangle (blue and purple, respectively). \textbf{(a)} $T$ is acute ($X_4$ is interior to the EB), and $I_h=X_4$. \textbf{(b)} $X_4$ is on the EB and $T$ is a right triangle. $T_h$ degenerates to a segment. \textbf{(c)} $X_4$ is exterior to the EB. Two of $T_h$'s vertices are outside $T$. $I_h$ is pinned to $T$'s obtuse vertex, on the EB. $X_4$ is an Excenter of the 3-periodic. The complete locus of $I_h$ comprises therefore 4 elliptic arcs (thick purple). \textbf{Video}: \cite[PL\#07]{reznik2020-playlist-intriguing}}
    \label{fig:orthic_incenter_locus}
\end{figure}

For the next proposition, 
let $\alpha_h^2$ (resp.~$1/\alpha_h'^2$) be the real root above 1 (resp.~less than 1) of the polynomial $1 + 12 x - 122 x^2 - 52 x^3 + 97 x^4$. Numerically, $\alpha_h{\simeq}1.174$ and $\alpha_h'{\simeq}2.605$.


\begin{proposition}
At $a/b=\alpha_h$ (resp.~$a/b=\alpha_h'$), at the sideways (resp.~upright) 3-periodic, the Orthic is a right triangle, Figure~\ref{fig:right-orthic}. If $a/b>\alpha_h$ some Orthics are obtuse (a family always contains acutes).
\end{proposition}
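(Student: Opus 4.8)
\emph{Strategy.} The plan is to reduce to the classical description of orthic angles and then to compute one angle in each of the two symmetric orbits. Recall that if a triangle $T$ is acute with angles $A,B,C$ its orthic triangle has angles $\pi-2A,\pi-2B,\pi-2C$, while if $T$ is obtuse at $C$ its orthic has angles $2A,2B,2C-\pi$; in both cases the orthic has a right angle if and only if one of $A,B,C$ equals $\pi/4$ or $3\pi/4$. (A right $T$ has a degenerate orthic, but this does not occur at the aspect ratios in play.) So the task is to find when the sideways, resp.\ upright, $3$-periodic first acquires an angle of $\pi/4$ or $3\pi/4$.

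\emph{The two apex angles.} For the sideways orbit take $P_1=(a,0)$; the reflection law makes $P_2,P_3$ mirror images across the major axis, so the chord $P_2P_3$ is the common tangent of the EB and the confocal Caustic $x^2/a_c^2+y^2/b_c^2=1$ perpendicular to the major axis. Hence $x_2=x_3=-a_c$ with $a_c=a(\delta-b^2)/c^2$, $b_c=b(a^2-\delta)/c^2$ the Caustic semi-axes \cite{garcia2019-incenter}, and using $y_2^2=b^2(1-a_c^2/a^2)$ the apex half-angle at $P_1$ comes out to
\[
\tan\frac{\gamma}{2}=\frac{b}{a}\sqrt{\frac{a-a_c}{a+a_c}}=\frac{b}{a}\sqrt{\frac{a^2-\delta}{\,a^2-2b^2+\delta\,}}\,.
\]
Since $a>b$ this orbit is always acute with $\gamma<\pi/3$, so its orthic is right exactly when $\gamma=\pi/4$, i.e.\ $\tan(\gamma/2)=\tan(\pi/8)=\sqrt2-1$. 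For the upright orbit $P_1=(0,b)$ the same computation (mirror axis now the minor axis, $y_2=y_3=-b_c$) gives $\tan(\gamma'/2)=(a/b)\sqrt{(b-b_c)/(b+b_c)}$, which is the previous expression with $a$ and $b$ interchanged; one checks directly that $\gamma'>\pi/2$ precisely for $a/b>\alpha_4$ — consistently with the earlier observation that at $a/b=\alpha_4$ the upright orbit is the right triangle whose right angle sits at $(0,b)$ — so there its orthic is right exactly when $\gamma'=3\pi/4$, i.e.\ $\tan(\gamma'/2)=\tan(3\pi/8)=\sqrt2+1$.

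\emph{Clearing the radicals.} Set $s=(a/b)^2$, so $\delta/b^2=\sqrt{s^2-s+1}$. Squaring the sideways equation, solving for the surviving radical, and squaring once more removes all roots; collecting powers of $\sqrt2$ and then of $s$ gives a sextic that factors as $P(s)(s-1)^2$ with $P(s)=97s^4-52s^3-122s^2+12s+1$. The factor $s=1$ is spurious (it is the equilateral case $\gamma=\pi/3$), so the condition is $P(s)=0$; from $P(0)=1>0$, $P(1)=-64<0$, $P(s)\to+\infty$ one sees $P$ has exactly one root above $1$ — this is $\alpha_h^2$ — and exactly one below $1$. The upright equation differs from the sideways one only in replacing $\sqrt2-1$ by its reciprocal $\sqrt2+1$, i.e.\ in a sign of $\sqrt2$; clearing it the same way yields $\widehat P(s)(s-1)^2=0$ with $\widehat P(s)=s^4P(1/s)=s^4+12s^3-122s^2-52s+97$, so $P(1/s)=0$ — that is, $(b/a)^2$ is the root of $P$ below $1$, which is $1/\alpha_h'^2$ in the notation of the statement, whence $a/b=\alpha_h'$. (The two governing polynomials coincide because $\tan^2(\gamma'/2)$ at ratio $a/b$ equals $\tan^2(\gamma/2)$ at ratio $b/a$, while $\tan(\pi/8)\tan(3\pi/8)=1$.)

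\emph{The last two assertions, and the hard part.} If $a/b>\alpha_h$, then as $\gamma$ runs continuously from $\pi/3$ at $a/b=1$ toward $0$ and equals $\pi/4$ only at the single root $\alpha_h^2$ of $P$ in $(1,\infty)$, we get $\gamma<\pi/4$; the sideways orbit is still acute, so its orthic has the angle $\pi-2\gamma>\pi/2$ and is obtuse — some orthics are obtuse. For the parenthetical, for $1\le a/b<\alpha_h'$ the upright orbit's orthic, with angles $(\pi-2\gamma',\gamma',\gamma')$ when $\gamma'<\pi/2$ and $(2\gamma'-\pi,\pi-\gamma',\pi-\gamma')$ when $\gamma'>\pi/2$, lies entirely in $(0,\pi/2)$, hence is acute; for $a/b\ge\alpha_h'$ one instead uses continuity, picking $P_1$ between the sideways and upright positions so that all three orbit angles fall in $(\pi/4,\pi/2)$. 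I expect the genuine work to be the radical bookkeeping in the ``clearing'' step — the two successive squarings introduce the extraneous factor $(s-1)^2$ and a wrong $\sqrt2$-branch, and one must certify that the physically correct root is the claimed root of $P$ — together with this last continuity argument for strongly elongated ellipses, where neither symmetric orbit has an acute orthic.
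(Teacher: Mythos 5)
Your argument is correct in substance but follows a genuinely different route from the paper. The paper works in coordinates: it writes down the orthic vertices $A',B',C'$ of the isosceles triangle $A=(a,0)$, $B=(-u,v)$, $C=(-u,-v)$ explicitly, imposes the perpendicularity condition $\langle B'-A',C'-A'\rangle=0$, which reduces to $r(a,u,v)=(a+u)^2-v(2a+2u+v)=0$, and then substitutes the known expressions $u=a(\delta-b^2)/c^2\cdot\tfrac{c^2}{a^2-b^2}$-type formulas for the isosceles orbit to land directly on $97a^8-52a^6b^2-122a^4b^4+12a^2b^6+b^8=0$; the upright case is the same computation with $a$ and $b$ swapped. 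You instead angle-chase: you invoke the classical fact that the orthic of an acute (resp.\ obtuse) triangle has angles $\pi-2A,\dots$ (resp.\ $2A,2B,2C-\pi$), reduce the right-orthic condition for an isosceles orbit to the apex angle being $\pi/4$ or $3\pi/4$, compute $\tan(\gamma/2)$ from the tangency of $P_2P_3$ with the Caustic, and clear radicals. Both roads lead to the same quartic in $(a/b)^2$ (I checked your apex-angle formula numerically at $\alpha_h$ and $\alpha_h'$; it reproduces $\tan(\pi/8)$ and $\tan(3\pi/8)$ to the displayed precision), and your observation that the upright polynomial is the reciprocal of the sideways one because $\tan(\pi/8)\tan(3\pi/8)=1$ is a cleaner explanation of the $a\leftrightarrow b$ symmetry than the paper offers. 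Your approach buys more: the angle formulation immediately yields the monotone consequence that $\gamma<\pi/4$ for $a/b>\alpha_h$, hence the sideways orbit's orthic is obtuse, which proves the second sentence of the proposition --- something the paper's proof does not address at all. Two soft spots remain on your side: the double squaring and the certification that the surviving root of $P$ is the physically correct branch are set up but not executed (they do work out, and your identification of the extraneous factor $(s-1)^2$ as the equilateral case is right), and the parenthetical ``a family always contains acutes'' for strongly elongated ellipses rests on an unproved continuity claim that some $P_1$ gives all three orbit angles in $(\pi/4,\pi/2)$ or the obtuse analogue; since the paper proves neither of these secondary assertions, this does not put you behind it, but the last claim should not be labelled as mere bookkeeping --- it genuinely needs an argument.
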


\begin{proof}
The orthic of an isosceles triangle with vertices $A=(a,0),$ $B=(-u,v)$ and $C=(-u,-v)$ is the isosceles triangle with vertices:
\begin{align*}
A'=&(-u,0)\\
B'=&\left(\frac{-u(a+u)^2+v^2(2a+u) }{(a+u)^2+v^2},\frac{ v( (a+u)^2-v^2)}{(a+u)^2+v^2)}\right)\\
C'=&\left(\frac{-u(a+u)^2+v^2(2a+u) }{(a+u)^2+v^2},- \frac{v( (a+u)^2-v^2)}{(a+u)^2+v^2}\right)
\end{align*}

It is rectangle, if and only if, $\langle B'-A',C' -A'\rangle=0$. This condition is expressed by
$r(a,u,v)=(a+u)^2-v(2a+2u+v)=0.$

Using explicit expression to the 3-periodic vertices \cite{garcia2019-incenter}, obtain that  
$u= u(a,b)=a (\delta- {b}^{2})/({a}^{2}-{b}^{2})$ and $v=v(a,b)={b}^{2}\sqrt {2\delta-{a}^{2}-{b}^{2} 
}/({a}^{2}-{b}^{2}).
$
So it follows that $r(a,u,\simeq)=r(a,b)=97a^8-52a^6b^2-122a^4b^4+12a^2b^6+b^8=0.$
The same argument can be applied to the isosceles 3-periodic with vertices:

\begin{equation*}
    A=(0,b),\;\;B=(-v(b,a),-u(b,a)),\;\;C=(v(b,a),-u(b,a))
\end{equation*}

\noindent The associated orthic triangle will be rectangle, if and only if, $r(b,a)=0$.
\end{proof}

\begin{figure}
    \centering
    \includegraphics[width=\textwidth]{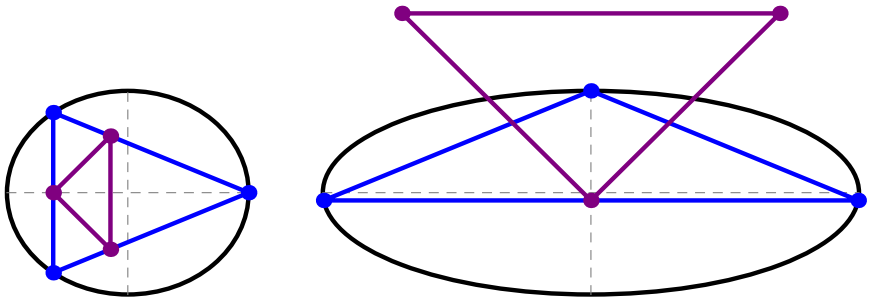}
    \caption{\textbf{Left}: At $a/b=\alpha_h{\simeq}1.174$, a sideways 3-periodic (blue) has a right-triangle Orthic (purple). If $a/b>\alpha_h$ some Orthics in the family are obtuse. \textbf{Right}: At $a/b=\alpha_h'{\simeq}2.605$, when the 3-periodic is an upright isosceles (obtuse since $a/b>\alpha_h$), its extraverted Orthic is also a right triangle.}
    \label{fig:right-orthic}
\end{figure}

The obtuseness of 3-periodic Orthics is a complex phenomenon with several regimes depending on $a/b$. Figure~\ref{fig:orthic-orthic} provides experimental details. 

\begin{figure}
    \centering
    \includegraphics[width=\linewidth]{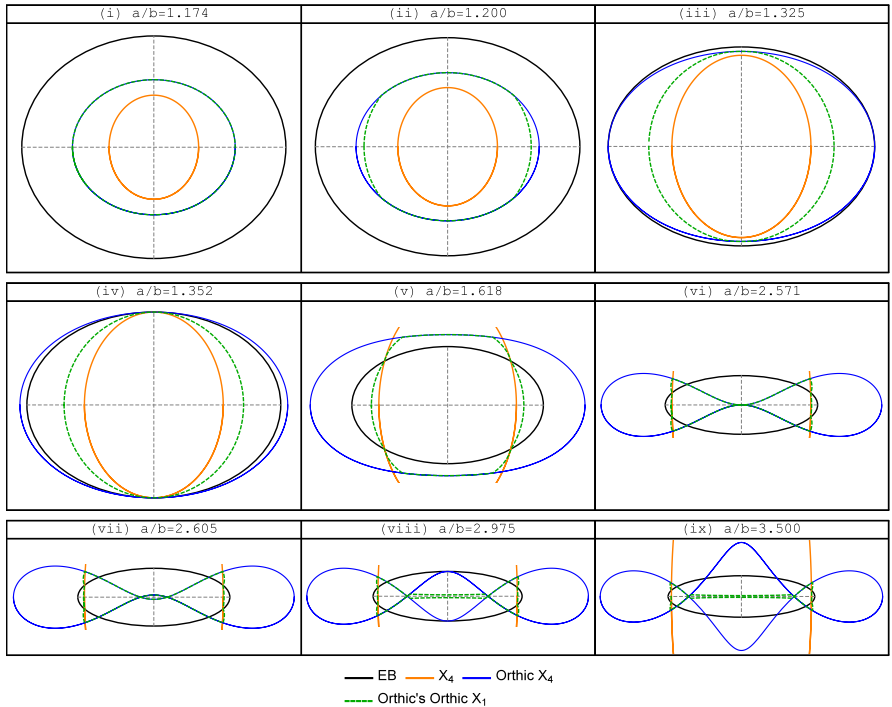}
    \caption{When are Orthics Obtuse? As before, this requires the Orthic Orthic's Incenter $X_1''$ (dashed green) to be detached from the Orthic's Orthocenter $X_4'$ (blue). Let $V$ (resp. $H$) denote either one of the two upright (resp. sideways) isosceles 3-periodics. Notable transitions occur at: (i) $a/b=\alpha_h{\simeq}1.174$, the locus of $X_1''$ is identical to that of $X_4'$, and at $H$, its Orthic is a right-triangle, Figure~\ref{fig:right-orthic} (left); (ii) $a/b>\alpha_h$, one can see $X_1''$ detaching from $X_4'$ indicating a region of obtuse Orthics about the $H$; (iii) At $a/b{\simeq}1.325$ the locus of $X_4'$ touches the EB's left and right vertices at the $H$; (iv) At $a/b=\alpha_4{\simeq}1.352$, the loci of $X_4$ of $X_4'$, and $X_1''$ touch the EB's top and bottom vertices, indicating $V$ are right-triangles and all Orthics not stemming from these are obtuse; (v) At $a/b>\alpha_4$, $X_1''$ tracks $X_4'$ about $V$, indicating some Orthics are acute; (vi) At $a/b{\simeq}2.571$, $X_4'$ two acute Orthics pass through the origin. Above this threshold, the locus of $X_4'$ becomes self-intersecting on the horizontal axis of the EB; (vii) At $a/b=\alpha_h'{\simeq}2.605$, $V$ yield right-triangle Orthics, Figure~\ref{fig:right-orthic} (right). Just above this threshold a new region of obtuse Orthics flares up about $V$; (viii) at $a/b{\simeq}2.965$ $X_4'$ of two obtuse Orthics touch top and bottom vertex of the EB at $V$; (ix) as $a/b$ increases, Orthics about $V$ become more and more obtuse (judging from the deviation between blue and dashed green curves), however two sideway regions of acute Orthic remain where $X_1''$ tracks $X_4'$: these are squeezed to the left and right of the self-intersections of $X_4'$ and the locus of $X_4$. \textbf{Video}: \cite[PL\#08]{reznik2020-playlist-intriguing}}
    \label{fig:orthic-orthic}
\end{figure}

\clearpage
\subsection{Can a Locus be Self-Intersecting?}
\label{sec:x59}
\noindent
\epigraph{The trees are in their autumn beauty,\\
The woodland paths are dry,\\
Under the October twilight the water\\
Mirrors a still sky;\\
Upon the brimming water among the stones\\
Are nine-and-fifty swans.}{\textit{W.B. Yeats}}

Yeats points us an elegant TC: $X_{59}$, the ``Isogonal Conjugate of $X_{11}$'' \cite{etc}, i.e., the two centers have reciprocal trilinears.

Experimentally, $X_{59}$ is a continuous curve internally tangent to the EB at its four vertices, and with four self-intersections, Figure~\ref{fig:x59-center}, and as an animation \cite[PL\#12]{reznik2020-playlist-intriguing}. It intersects a line parallel to and infinitesimally away from either axis on six points, so its degree must be at least 6. Producing analytic expressions for salient aspects of its geometry has proven a tough nut to crack, namely, the following are unsolved:

\begin{itemize}
\item What is the degree of its implicit equation?
\item What is $t$ in $P_1(t)=\left(a\cos(t),b\sin(t)\right)$ such that $X_{59}$ is on one of the four self-intersections? For example, at $a/b=1.3$ (resp. $1.5$), $t$, given in degrees is ${\simeq}32.52^\circ$ (resp. $29.09^\circ$), Figure~\ref{fig:x59-center} (left-bottom).
\item What is $a/b$ such that if $X_{59}$ is on one of the lower self-intersection on the $y$-axis, the 3-periodic is a right triangle? Numerically, this occurs when $a/b=\alpha_{59}^\perp\,{\simeq}\,1.58$ and $t{\simeq}27.92^\circ$, Figure~\ref{fig:x59-center} (right).
\end{itemize}

\begin{figure}
    \centering
    \includegraphics[width=\textwidth]{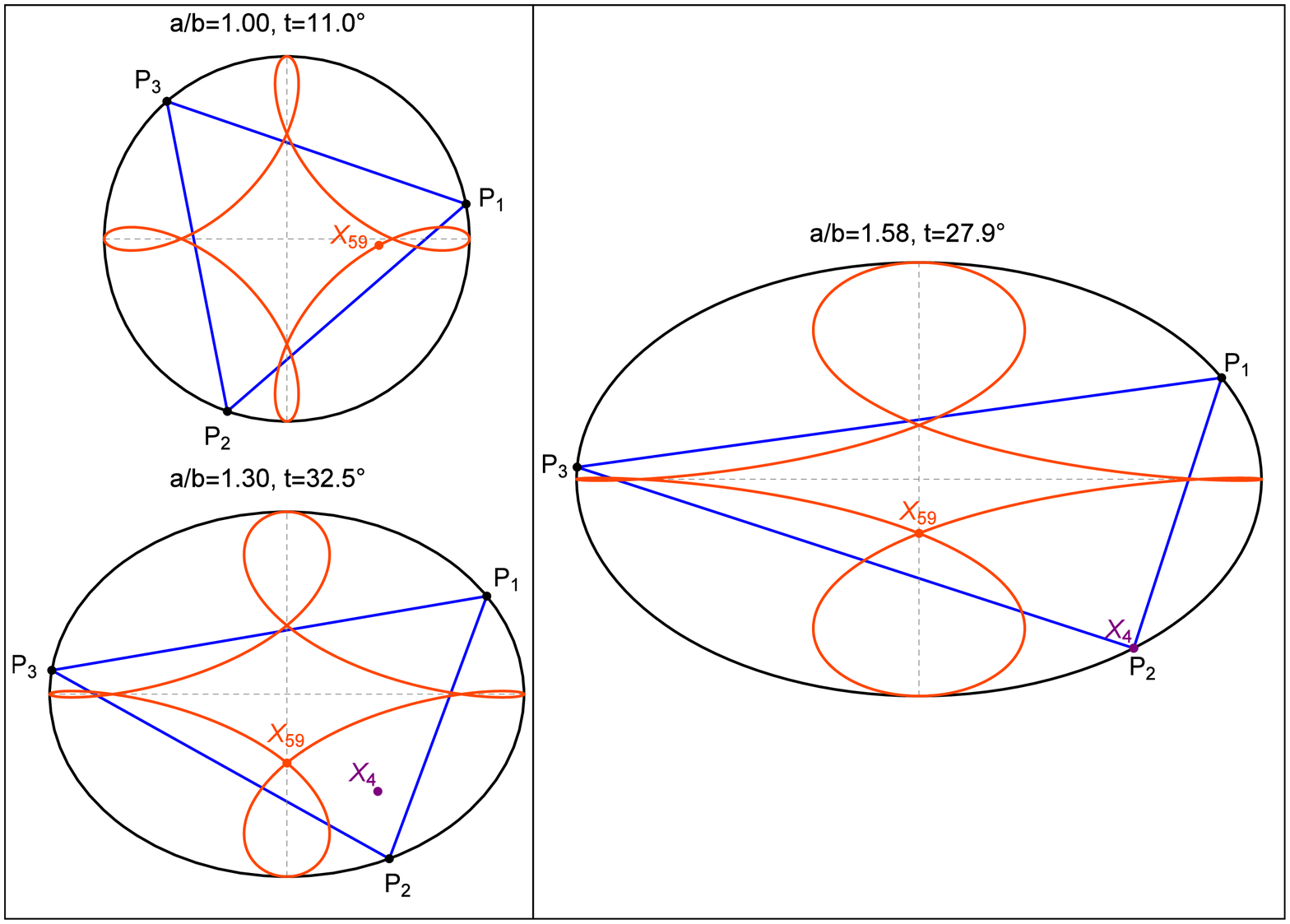}
    \caption{The locus of $X_{59}$ is a continuous curve with four self-intersections, and at least a sextic. It is tangent to the EB at its four vertices. \textbf{Top Left}: circular EB, $X_{59}$ is symmetric about either axis. \textbf{Bottom Left}: $a/b=1.3$, at $t{\simeq}32.5^\circ$ $X_{59}$ is at the lower self-intersection and the 3-periodic is acute ($X_4$ is interior). \textbf{Right}: at $a/b=\alpha_{59}^\perp\,{\simeq}\,1.58$ the following feat is possible: $X_{59}$ is at the lower self-intersection {\em and} the 3-periodic is a right-triangle ($X_4$ is on $P_2$). This occurs at $t{\simeq}27.9^\circ$. \textbf{Video}: \cite[PL\#12]{reznik2020-playlist-intriguing}}
    \label{fig:x59-center}
\end{figure}

\subsection{Can a Locus be Non-compact}
\label{sec:x26}
$X_{26}$ is the Circumcenter of the Tangential Triangle \cite{mw}. Its sides are tangent to the Circumcircle at the vertices. If the 3-periodic is a right-triangle, its hypotenuse is a diameter of the Circumcircle, and $X_{26}$ is unbounded.

We saw above that:

\begin{itemize}
\item $a/b<\alpha_4$, the 3-periodic family is all-acute, i.e., the locus of $X_{26}$ is compact. Figure~\ref{fig:orthocenter_loci} (top left).
\item $a/b=\alpha_4$, the family is all-acute except when one of its vertices coincides with the top or bottom vertex of the EB, Figure~\ref{fig:orthocenter_loci} (bottom left). In this case the 3-periodic is a right triangle and $X_{26}$ is unbounded.
\item $a/b>\alpha_4$, the family features both acute and obtuse triangles. The transition occurs at for four right-angle 3-periodics whose $X_4$ is on the EB, Figure~\ref{fig:orthic_incenter_locus}(b). Here too $X_{26}$ flies off to infinity, Figure~\ref{fig:x26}.
\end{itemize}

\begin{figure}
    \centering
    \includegraphics[width=\textwidth]{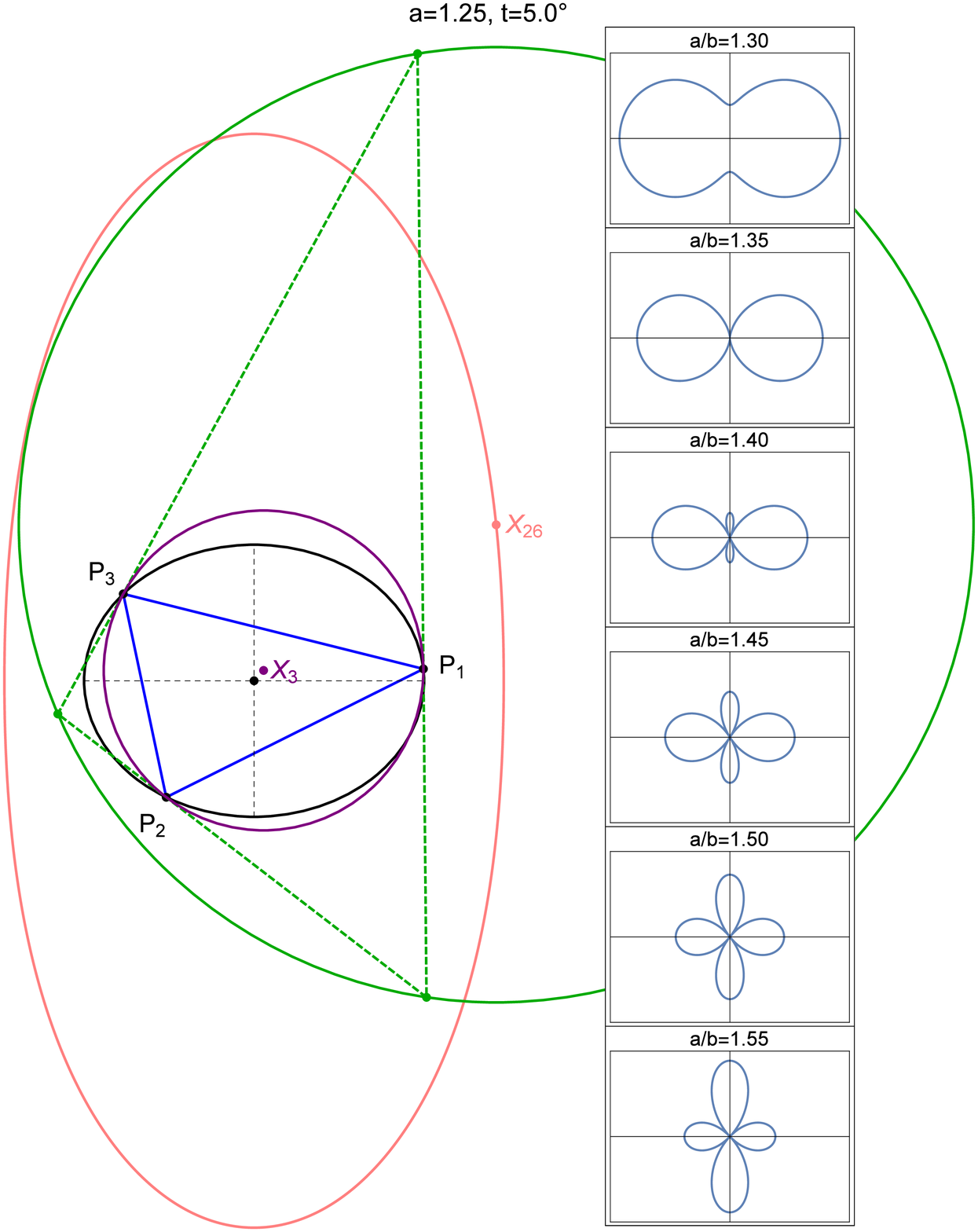}
    \caption{The locus of $X_{26}$ for a 3-periodic (blue) in an $a/b=1.25$ EB (black). Also shown is the 3-periodic's Circumcircle (purple) and its Tangential Triangle \cite{mw} (dashed green). $X_{26}$ is the center of the latter's Circumcircle (solid green). Its locus is non-elliptic. In fact, when $a/b{\geq}\alpha_4$, the 3-periodic family will contain right-triangles ($X_4$ crosses the EB). At these events, $X_{26}$ flies off to infinity. The right inset shows an inversion of $X_{26}$  with respect to the EB center for various values of $a/b$. When $a/b>\alpha_4$, the inversion goes through the origin, i.e., $X_{26}$ is at infinity.}
    \label{fig:x26}
\end{figure}

\section{Intermezzo: Two unexpected phenomena}
\label{sec:intermezzo}
\subsection{The Billiard Lays a Golden Egg}
\label{sec:x40}

The {\em Bevan Point} $X_{40}$ is known as the Circumcenter of the Excentral Triangle \cite{etc}. It is the tangential polygon to the 3-periodic, and can be thought of as its projective dual \cite{levi2007-poncelet-grid}.

We have shown elsewhere the locus of $X_{40}$ is an ellipse similar to a rotated copy of the Billiard. Its semi-axes are given by \cite{garcia2020-ellipses}:

\begin{equation*}
    a_{40}=c^2/a,\;\;\; b_{40}=c^2/b.
\end{equation*}

\begin{proposition}
At $a/b=\sqrt{2}$ i.e., the top and bottom vertices of the $X_{40}$ touch the Billiard's top and bottom vertices.
\end{proposition}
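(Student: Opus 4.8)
The plan is to mimic the proofs of the preceding propositions: reduce the geometric claim to a one-line algebraic identity using the already-established semi-axis formulas for the locus of $X_{40}$. First I would recall from \cite{garcia2020-ellipses} that this locus is an ellipse centered at the EB center with semi-axes $a_{40}=c^2/a$ along the $x$-axis and $b_{40}=c^2/b$ along the $y$-axis, where $c^2=a^2-b^2$. Since $a>b>0$ gives $b_{40}=c^2/b>c^2/a=a_{40}$, the $y$-axis is the major axis of this locus; this is exactly the sense in which it is ``a rotated copy of the Billiard,'' and its ``top and bottom vertices'' are the points $(0,\pm b_{40})$.

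Next I would impose the coincidence condition. The top and bottom vertices of the EB are $(0,\pm b)$, so the requirement is simply $b_{40}=b$, i.e. $c^2/b=b$, i.e. $a^2-b^2=b^2$. Solving, $a^2=2b^2$, and since $a,b>0$ this is equivalent to $a/b=\sqrt{2}$, as claimed. One may optionally add the remark that at this ratio $a_{40}=c^2/a=b^2/a=b/\sqrt2<b$, so the locus of $X_{40}$ is then inscribed in the EB, touching it only at the shared top and bottom vertices — the analogue of the configuration in Figure~\ref{fig:orthocenter_loci} (bottom left) for $X_4$.

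There is essentially no obstacle here: the entire content is contained in the cited semi-axis formula, and the remaining step is the trivial manipulation $c^2/b=b\iff a^2=2b^2$. If anything, the only point deserving a sentence of care is the orientation bookkeeping — making explicit that $b_{40}$ (not $a_{40}$) is the semi-axis aligned with the EB's minor axis, so that ``top and bottom vertices'' on both curves refer to points on the common $y$-axis. Once that is noted, the proof is a single displayed equation followed by the conclusion $a/b=\sqrt2$.
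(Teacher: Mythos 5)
Your proof is correct and follows exactly the paper's route: the paper's entire proof is the single sentence that the claim ``follows from imposing $b_{40}=b$,'' and you have simply carried out that computation ($c^2/b=b\iff a^2=2b^2\iff a/b=\sqrt2$) together with the orientation check that $b_{40}$ is the semi-axis along the $y$-axis. No gap; your version is just a more explicit rendering of the same one-line argument.
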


\begin{proof}
This follows from imposing $b_{40}=b$.
\end{proof}

\noindent What we got next was unexpected, see Figure~\ref{fig:x40-golden}:

\begin{proposition}
At $a/b = (1+\sqrt{5})/2=\varphi$, the Golden Ratio, the locus of $X_{40}$ is identical to a $90^\circ$-rotated copy of the EB
\end{proposition}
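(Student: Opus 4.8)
The plan is to reduce the statement to the single algebraic condition already implicit in the formulas for the semi-axes of the $X_{40}$ locus. The locus of $X_{40}$ is an ellipse whose axes are aligned with those of the EB, with semi-axes $a_{40}=c^2/a$ (horizontal) and $b_{40}=c^2/b$ (vertical). A $90^\circ$-rotated copy of the EB is the ellipse with semi-axes $b$ horizontal and $a$ vertical. Since $a>b$ forces $b_{40}=c^2/b>c^2/a=a_{40}$, the $X_{40}$ locus is automatically ``upright'' (major axis vertical), so it has at least a chance of coinciding with the rotated EB; the two coincide exactly when the horizontal semi-axes agree and the vertical semi-axes agree, i.e.
\begin{equation*}
\frac{c^2}{a}=b \qquad\text{and}\qquad \frac{c^2}{b}=a.
\end{equation*}

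Next I would observe that, because $c^2=a^2-b^2$, each of these two equations is equivalent to the same relation $c^2=ab$, that is $a^2-ab-b^2=0$. Dividing by $b^2$ and writing $r=a/b$ yields $r^2-r-1=0$, whose unique positive root is $r=(1+\sqrt5)/2=\varphi$; since $\varphi>1$ this is an admissible aspect ratio ($a>b$). Conversely, substituting $a/b=\varphi$ back gives $c^2=ab$, hence $a_{40}=c^2/a=b$ and $b_{40}=c^2/b=a$, so the locus is precisely the $90^\circ$-rotation of the EB. This mirrors, verbatim in structure, the proof of the preceding proposition (where one imposes $b_{40}=b$).

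There is essentially no obstacle here beyond bookkeeping: the only point worth spelling out is why matching the two pairs of semi-axes is \emph{sufficient} for the curves to be identical, which is immediate since both are axis-aligned ellipses centered at the EB center (the center of the $X_{40}$ locus was established earlier to be the EB center). Thus the entire argument is the chain ``rotated-copy condition $\iff$ $c^2=ab$ $\iff$ $(a/b)^2-(a/b)-1=0$ $\iff$ $a/b=\varphi$,'' and the appearance of the Golden Ratio is simply the statement that $\varphi$ is the positive root of $x^2-x-1$.
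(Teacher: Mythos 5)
Your argument is correct and is essentially the paper's own proof, which simply says the result "follows from imposing $b_{40}=a$"; you have just spelled out the reduction $c^2=ab \iff (a/b)^2-(a/b)-1=0 \iff a/b=\varphi$ and the (easy) sufficiency of matching semi-axes for two concentric axis-aligned ellipses. No gap.
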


\begin{proof}
This follows from imposing $b_{40} = a$.
\end{proof}

\begin{figure}[H]
    \centering
    \includegraphics[width=.66\textwidth]{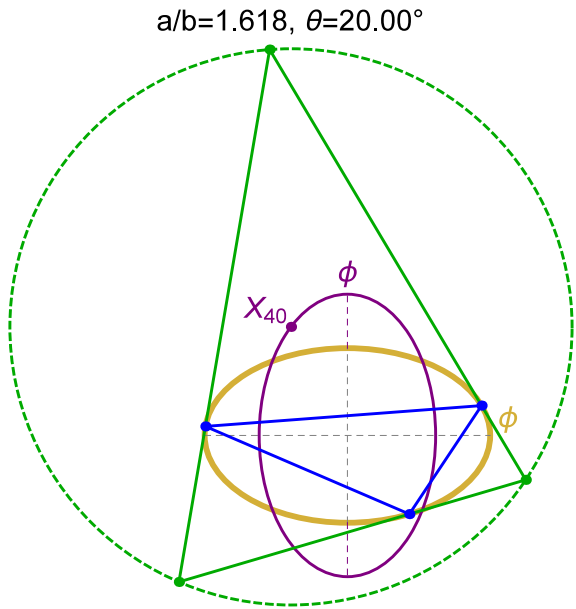}
    \caption{An $a/b=\varphi$ EB is shown golden. Also shown is a sample 3-periodic (blue). The Bevan Point $X_{40}$ is the Circumcenter of the Excentral Triangle (solid green). At this EB aspect ratio, the locus of $X_{40}$ (purple) is identical to a $90^\circ$-rotated EB. \textbf{Video}: \cite[PL\#13]{reznik2020-playlist-intriguing}.}
    \label{fig:x40-golden}
\end{figure}

\subsection{A Derived Triangle railed onto the EB}

Consider a 3-periodic's Anticomplementary Triangle (ACT) \cite{mw} and its Intouchpoints $i_1^\prime,i_2^\prime,i_3^\prime$, Figure~\ref{fig:act_intouch}. Remarkably:

\begin{theorem}
The locus of the Anticomplementary Triangle's Intouchpoints is the EB.
\end{theorem}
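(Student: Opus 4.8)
The plan is to reduce the statement to one explicit algebraic identity in the orbit parameter $t$, and then to verify that identity from the known coordinates of the 3-periodic.

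First, observe that the Anticomplementary Triangle is the image of the 3-periodic $T=P_1P_2P_3$ under the homothety $h$ centered at the barycenter $X_2$ with ratio $-2$ (equivalently, $T$ is the medial triangle of the ACT). Being a similarity, $h$ carries the incircle of $T$, together with its three contact points $Y_1,Y_2,Y_3$, onto the incircle of the ACT together with its three contact points; hence the ACT Intouchpoints are the anticomplements $i_k'=h(Y_k)=(P_1+P_2+P_3)-2Y_k$. With $a=|P_2P_3|$, $b=|P_1P_3|$, $c=|P_1P_2|$, $s=(a+b+c)/2$ and $Y_1=P_2+\tfrac{s-b}{a}(P_3-P_2)$ this collapses to
\begin{equation*}
i_1'=P_1+\frac{b-c}{a}\,(P_3-P_2)=P_1+(E_1-Y_1),
\end{equation*}
where $E_1$ is the extouch point of $T$ on $P_2P_3$; in particular $i_1'$ always lies on the line $\ell_1$ through $P_1$ parallel to $P_2P_3$. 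Cyclic relabeling gives $i_2',i_3'$, so by symmetry it suffices to prove $i_1'\in\mathrm{EB}$ for every position of the orbit; and since $i_1'\in\ell_1$ with $i_1'\neq P_1$ whenever $b\neq c$, this is the same as showing that $i_1'$ is the \emph{second} intersection of $\ell_1$ with the EB.

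Next I would make the second-intersection condition explicit. Writing $P_i=(a\cos t_i,b\sin t_i)$ and using that on the EB the symmetric form $\langle u,v\rangle:=u_xv_x/a^2+u_yv_y/b^2$ obeys $\langle P_i,P_i\rangle=1$ and $\langle P_i,P_j\rangle=\cos(t_i-t_j)$, the second root of the quadratic $f(P_1+\tau(P_3-P_2))=0$ in $\tau$ is
\begin{equation*}
\tau^{\star}=-\frac{2\langle P_1,P_3-P_2\rangle}{\langle P_3-P_2,P_3-P_2\rangle}=\frac{\sin^2\!\tfrac{t_1-t_3}{2}-\sin^2\!\tfrac{t_1-t_2}{2}}{\sin^2\!\tfrac{t_2-t_3}{2}}.
\end{equation*}
Thus the theorem is equivalent to the single identity $\dfrac{b-c}{a}=\tau^{\star}$, i.e.\ to
\begin{equation*}
\frac{|P_1P_3|-|P_1P_2|}{|P_2P_3|}=\frac{\sin^2\!\tfrac{t_1-t_3}{2}-\sin^2\!\tfrac{t_1-t_2}{2}}{\sin^2\!\tfrac{t_2-t_3}{2}}.
\end{equation*}
To close it I would insert the explicit (ruler-and-compass) coordinates of $P_2(t),P_3(t)$ for the parametrization $P_1(t)=(a\cos t,b\sin t)$ from \cite{garcia2019-incenter} — using the chord-length formula $|P_iP_j|=2\big|\sin\tfrac{t_i-t_j}{2}\big|\sqrt{a^2\sin^2\tfrac{t_i+t_j}{2}+b^2\cos^2\tfrac{t_i+t_j}{2}}$, or just Cartesian distances — and check that the two sides agree identically; the invariance of the perimeter $|P_1P_2|+|P_2P_3|+|P_3P_1|$ of 3-periodics can be used to eliminate one of the side lengths. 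Equivalently one may substitute $i_1'(t)$ directly into \eqref{eqn:billiard-f} and verify $f(i_1'(t))\equiv1$.

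The main obstacle is precisely this last verification: the coordinates of $P_2(t),P_3(t)$ carry nested (double) square roots, so the symbolic expansion is bulky and the radicals have to be cleared in a controlled order — by repeated rationalization/squaring, or by working inside the appropriate radical extension — a bookkeeping task naturally delegated to a computer algebra system. A reassuring consistency check along the way is the circular limit $a\to b$: there $T$ is equilateral, $b=c$, $i_1'=P_1$, and the claim holds trivially.
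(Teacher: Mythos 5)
Your reduction is correct and takes a genuinely different route from the paper. The paper normalizes to an ``elementary'' triangle $Q_1=(0,0)$, $Q_2=(1,0)$, $Q_3=(u,v)$, writes down the implicit equation of its \emph{Circumbilliard} $E(x,y)=0$, computes the ACT intouchpoints as feet of perpendiculars from the Nagel point, and checks $E(i_k')=0$ by direct (CAS) substitution --- so the billiard enters through the circumellipse adapted to the triangle, and the verification involves only polynomials in $u,v,s_1,s_2$ with no nested radicals. You instead fix the billiard, use the homothety $h$ of ratio $-2$ at $X_2$ to get the clean closed form $i_1'=P_1+\tfrac{b-c}{a}(P_3-P_2)$ (which I checked: $1-\tfrac{2(s-b)}{a}=\tfrac{b-c}{a}$), observe that $\ell_1$ is the ACT side $P_2'P_3'$, and reduce the whole theorem to the single identity $\tfrac{b-c}{a}=\tau^{\star}$ via the bilinear form trick $\langle P_i,P_j\rangle=\cos(t_i-t_j)$. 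That is an attractive and more structural reduction; its price is that the billiard condition now has to enter through the explicit double-radical coordinates of $P_2(t),P_3(t)$, so your closing computation is heavier than the paper's. Two caveats. First, the decisive step --- verifying $\tfrac{b-c}{a}=\tau^{\star}$ --- is only sketched, not carried out; note that it genuinely requires the 3-periodicity (on a circle, for an arbitrary inscribed triangle, your identity would force $|\sin\tfrac{t_1-t_3}{2}|+|\sin\tfrac{t_1-t_2}{2}|=|\sin\tfrac{t_2-t_3}{2}|$, which is false), so there is no shortcut around substituting the billiard-specific vertices. If the radicals prove unwieldy, the paper's normalization is the easier computational path to the same end. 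Second, the theorem asserts the locus \emph{is} the EB, not merely that it lies on it; the paper closes this with a short covering argument ($i_1'(t)$ stays on the arc between $P_1(t)$ and $P_3(t)$, hence completes a full circuit as $P_1$ does), and your write-up omits this half of the claim entirely --- it is easy to supply from your own setup, since $i_1'$ is the second intersection of the moving ACT side with the EB and depends continuously on $t$, but it does need to be said.
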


\begin{proof}
Consider an elementary triangle with vertices $Q_1=(0,0)$, $Q_2=(1,0)$  and $Q_3=(u,v)$. Its sides are $s_1=|Q_3-Q_2|$, $s_2=|Q_3-Q_1|$, and $s_3=1$.

Let $E$ be its {\em Circumbilliard}, i.e., the Circumellipse for which ${Q_1}{Q_2}{Q_3}$ is a 3-periodic EB trajectory. The following implicit equation for $E$ was derived \cite{garcia2019-incenter}:

\begin{align*}
 E(x,y)=& v^2 x^2+(u^2+(s_1+s_2-1)u-s_2 )y^2+\\
 &v( 1-s_1-s_2-2u )x y+v(s_2+u)y-v^2x =0
\end{align*}

The vertices of the ACT are given by
 $Q_1^\prime=(u-1,v),\,Q_2^\prime=(u+1,v),\,Q_3^\prime=(1-u,-v)$, and its Incenter\footnote{This is the Nagel Point $X_8$ of the original triangle.} is:
 
 \begin{equation*}
 X_1^\prime=\left[s_1-s_2+u, \frac{s_2(s_1-1)+(1-s_1+s_2)u  -u^2}{v}
 \right].
 \end{equation*}
 
 The ACT Intouchpoints are the feet of perpendiculars dropped from $X_1'$ onto each side of the ACT, and can be derived as:
 
 \begin{align*}
 i_1^\prime=& \left[ \frac{s_1(u-1)u+s_2}{s_2},\frac{ v (s_1-1)}{s_2} \right]\\
  i_2^\prime=& \left[  \frac{ (u-1)(s_2-1)}{s_2}, \frac{(s_2-1)v}{s_2}   \right]\\
   i_3^\prime=& \left[   s_1-s_2+u,v \right].
\end{align*}
 
Direct calculations shows that
$E(i_1^\prime)=E(i_2^\prime)=E(i_3^\prime)=0.$. Besides always being on the EB, the locus of the Intouchpoints cover it. Let $P_1(t)P_2(t)P_3(t)$ be a 3-periodic and $P_1^{\prime} (t)P_2^{\prime}(t)P_3^{\prime}(t)$ its ACT.
 For all $t$ the Intouchpoint $i_1^{\prime}(t)$ is located on the side $P_2^{\prime}(t) P_3^{\prime}(t)$ of the ACT and on the elliptic arc     $\textrm{arc}(P_1(t)P_3(t))$, Figure \ref{fig:act_intouch}. Therefore, when $P_1(t)$ completes a circuit on the EB, $i_1^{\prime}(t)$ will have to complete a similar tour. 
Analogously for  $i_2^{\prime}(t)$ and  $i_3^{\prime}(t)$.
\end{proof}

\begin{figure}[H]
    \centering
    \includegraphics[width=\textwidth]{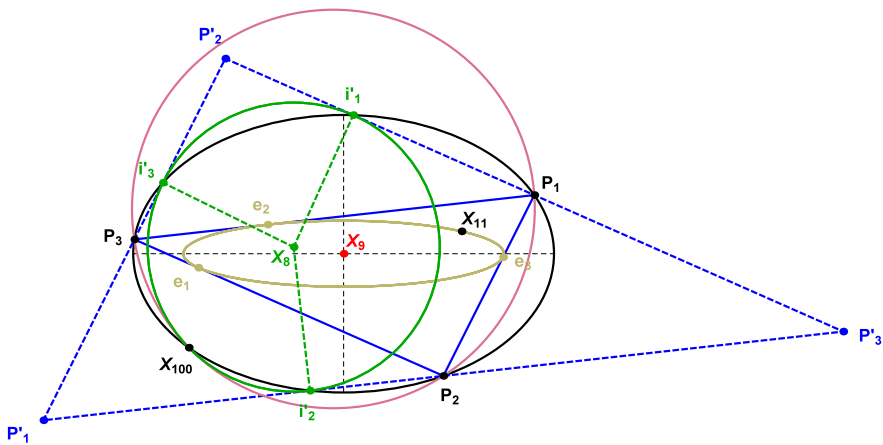}
    \caption{A 3-periodic $P_1P_2P_3$ is shown (blue). Shown also is the Mittenpunkt $X_9$, at the EB center. The 3-periodic's Anticomplementary Triangle (ACT) $P_1'P_2'P_3'$ (dashed blue) has sides parallel to the 3-periodic. The latter's Intouchpoints $i_1'$, $i_2'$, and $i_3'$ are the feet of perpendiculars dropped from the ACT's Incenter ($X_8$) to each side (dashed green). The ACT's Incircle (green) and 9-point circle (the 3-periodic's Circumcircle, pink) meet at $X_{100}$, the ACT's Feuerbach Point. Its locus is also the EB. The Caustic is shown brown. On it there lie $X_{11}$ and the three Extouchpoints $e_1$, $e_2$, $e_3$. \textbf{Video:} \cite[PL\#09]{reznik2020-playlist-intriguing}}
    \label{fig:act_intouch}
\end{figure}

\section{Second Movement: Motion}
\label{sec:act-II}
Let $P_1P_2P_3$ be the vertices of a\ 3-periodic.

\begin{proposition}
 If $P_1$ is slid along the EB in some direction, $P_2$ and $P_3$ will slide in the same direction. 
\end{proposition}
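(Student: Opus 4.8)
The plan is to reduce the claim to the monotonicity of a single self-map of the EB. Let $\mathcal C$ denote the confocal caustic; by Poncelet's closure theorem each side of a $3$-periodic is tangent to $\mathcal C$, and the entire orbit is reconstructed from one vertex: from a point $P$ on the EB there are exactly two tangent lines to $\mathcal C$ (since $\mathcal C$ is a strictly smaller, nested ellipse), both pointing into the EB; order them continuously, say by the angle they make with the counterclockwise unit tangent to the EB at $P$, and let $\phi(P)$ be the second intersection with the EB of the rightmost one. Then $P_2=\phi(P_1)$ and $P_3=\phi(P_2)=\phi^2(P_1)$, so it suffices to prove that $\phi$ is an orientation-preserving homeomorphism of the EB; sliding $P_1$ in one sense around the EB then forces $P_2$ and $P_3$ to slide in the same sense.

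Since the two tangent directions are always distinct (as $P\notin\mathcal C$ and $\mathcal C$ is nondegenerate) and depend continuously on $P$, the chosen branch is single-valued and continuous, so $\phi:\mathrm{EB}\to\mathrm{EB}$ is continuous; and it is a bijection because the map built from the other branch is its inverse. A continuous bijection of a topological circle is a homeomorphism, hence either orientation-preserving or orientation-reversing. To exclude the latter I would use that an orientation-reversing homeomorphism of $S^1$ necessarily has a fixed point --- lifting to $\mathbb R$, the continuous function $x\mapsto\tilde\phi(x)-x$ changes by $-2$ over each period and so attains an integer value by the intermediate value theorem --- whereas $\phi$ has no fixed point: the tangent line to $\mathcal C$ through $P$ contains a point of the open region bounded by the EB (the tangency point on $\mathcal C$), hence is a proper secant of the EB, so its second intersection $\phi(P)$ is distinct from $P$. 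Therefore $\phi$ is orientation-preserving, which proves the proposition, the case of $P_3=\phi^2(P_1)$ included.

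I expect the only subtle point to be verifying that the branch of $\phi$ is genuinely globally single-valued, i.e., that the two tangent lines never swap as $P$ traverses the EB; this is where one uses confocality (the EB and $\mathcal C$ are nested and disjoint, with no common tangent) rather than a bare Poncelet pair, and it is settled by noting that the ordering above never degenerates. Two alternatives would also work: a computational one --- differentiate the explicit closed forms for $P_2(t)$ and $P_3(t)$ from \cite{garcia2019-incenter} and check that $dt_2/dt_1$ and $dt_3/dt_1$ keep a constant sign; and a more conceptual one --- invoke the classical elliptic-function parametrization of the Poncelet family, under which there is an orientation-preserving diffeomorphism $\theta\mapsto P(\theta)$ of a circle onto the EB and a fixed shift $\Delta$ with $P_i=P(\theta+(i-1)\Delta)$, so advancing $\theta$ moves all three vertices the same way. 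I would present the topological argument as the main proof, since it is coordinate-free and exhibits the mechanism directly.
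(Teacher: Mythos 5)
Your proof is correct, but it takes a genuinely different route from the paper's. The paper argues locally and geometrically: as $P_1$ advances counterclockwise, the chord $P_1P_2$ is replaced by a chord through the new position obtained (up to translation) by a small CCW rotation of a parallel copy, and convexity of the Caustic forces the new tangency point $C'$ to lie ahead of $C$; since $P_2$ is determined by $C$, it advances too, and the argument repeats for the next side. You instead argue globally and topologically: the branch of the tangent-chord (Poncelet/billiard) map $\phi$ with $P_2=\phi(P_1)$, $P_3=\phi^2(P_1)$ is a continuous bijection of the circle, hence a homeomorphism; it is fixed-point free because each tangent chord passes through an interior point of the EB (its tangency point on the nested Caustic) and so is a proper secant; and a fixed-point-free homeomorphism of $S^1$ must be orientation-preserving, which is exactly the claimed monotonicity. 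Your branch bookkeeping is sound (the two interior-pointing tangent directions never coincide for $P$ off the Caustic, and the other branch furnishes the inverse), and the lifting argument ruling out orientation reversal is standard. What each approach buys: the paper's proof is shorter and exhibits the mechanism visibly (the tangency point drags the far endpoint forward), though its ``first touch'' convexity step is somewhat informal; yours is coordinate-free, makes the two ingredients (continuity plus absence of fixed points) explicit, and generalizes verbatim to $N$-periodics and indeed to any nested convex pair --- at the mild cost of more topological machinery than the statement strictly needs. The paper's argument can be read as the infinitesimal version of your monotonicity statement.
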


\begin{proof}
Consider the tangency point $C$ of $P_1P_2$ with the confocal Caustic, Figure~\ref{fig:caustic-progress}. Since this segment remains tangent to the Caustic for any choice of $P_1(t)$, a counterclockwise motion of $P_1(t)$ will cause $C$ to slide along the Caustic in the same direction, and therefore $P_2(t)$ will do the same.
\end{proof}

The simultaneous monotonic motion of 3-periodic vertices is shown in Figure~\ref{fig:ekg-p1p2p3}, note the non-linear progress of $P_2,P_3$. Alternatively, we could have linearized their motion using the so-called Poritsky-Lazutkin string length parameter $\eta$ for $C$ on the Caustic (Figure~\ref{fig:caustic-progress}) given by \cite{Poritsky1950, Lazutkin73, alexey19}:

$$
d\eta =  \kappa^{2/3}\,ds,
$$

\noindent where  $s$ is the arc length along the Caustic, and $\kappa$ is the curvature. 
Both $\eta$ and $s$ are related to the parameter $t$ on the billiard by elliptic functions. Adjusting conveniently with a constant factor, one has $\eta\equiv\eta+1$ and for any $\eta_o$ the other vertices correspond to  $\eta_o+1/3$ and $\eta_o+2/3$.

\begin{figure}
    \centering
    \includegraphics[width=.66\textwidth]{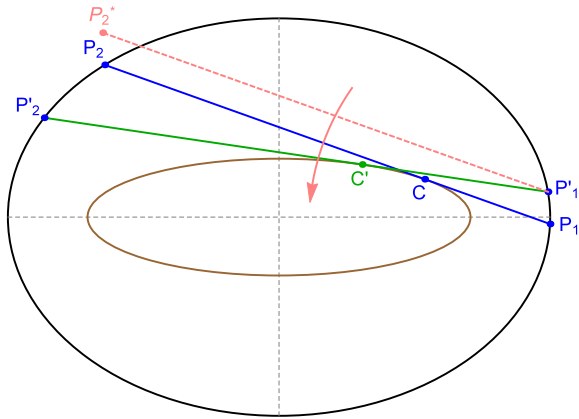}
    \caption{As one endpoint $P_1$ of a billiard trajectory is slid CCW to $P_1'$, its tangency point $C$ with the Caustic (brown) slides in the same direction to $C'$. This must be the case since $P_1'P_2'$ corresponds to a CCW rotation about $P_1'$ of segment $P_1'P_2^*$ (pink) parallel to $P_1P_2$ (see pink arrow). By convexity, said rotation will first touch the Caustic at $C'$, lying ``ahead'' of $C$. Repeating this for the $P_2P_3$ segment of a 3-periodic (not shown), it follows said vertices will move in the direction of $P_1$.}
    \label{fig:caustic-progress}
\end{figure}

\begin{figure}
    \centering
    \includegraphics[width=\textwidth]{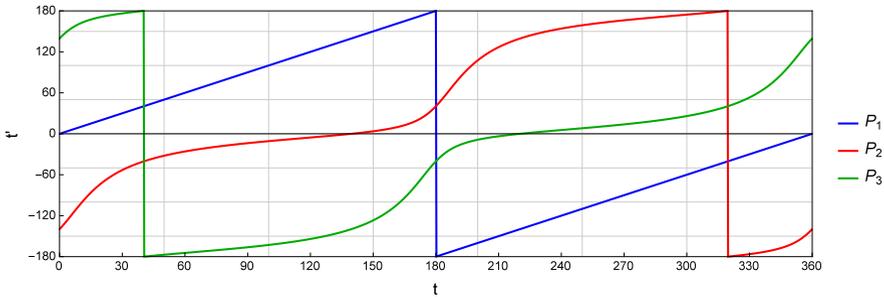}
    \caption{As $P_1(t)$ moves monotonically forward, so do $P_2(t)$ and $P_3(t)$, albeit with varying velocities with respect to $t$. In the text we mention an alternate parametrization (Poritsky-Lazutkin) under which the three lines would become straight.}
    \label{fig:ekg-p1p2p3}
\end{figure}

\subsection{Non-monotonicity: a first brushing}
\label{sec:act}
\mbox{ \,} \\ \smallskip

Let
$\alpha_{act}=2\sqrt{2/5} \simeq 1.2649$.

\begin{proposition}
The motion of the ACT Intouchpoints is as follows:

\begin{itemize}
    \item $a/b<\alpha_{act}$: monotonic in the direction of $P_1(t)$.
    \item $a/b=\alpha_{act}$: monotonic in the direction of $P_1(t)$, except for two instantaneous stops when passing at EB top and bottom vertices.
    \item $a/b>\alpha_{act}$:  non-monotonic with four reversals of velocity, a first (resp. second) pair of reversals near the EB's top (resp. bottom) vertex Figure~\ref{fig:act_intouch}.
\end{itemize}
\end{proposition}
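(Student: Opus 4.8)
The plan is to reduce the statement to the sign of one scalar function. The three ACT Intouchpoints lie on the EB (preceding Theorem) and are cyclically permuted when the marked vertex is relabelled $P_1\mapsto P_2\mapsto P_3$, so it suffices to treat $i_1'$: write $i_1'(t)=(a\cos\phi_1(t),\,b\sin\phi_1(t))$, so ``monotone in the direction of $P_1$'' is $\phi_1'>0$, a ``stop'' is $\phi_1'=0$, and a ``reversal'' is a sign change of $\phi_1'$; since $\phi_1(t+2\pi)=\phi_1(t)+2\pi$ the average of $\phi_1'$ over a period is $1$, so the three regimes ($\phi_1'>0$ everywhere; $\phi_1'\ge0$ with isolated zeros; $\phi_1'$ changing sign) are governed by the sign of $\min_t\phi_1'(t)$. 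The first key step is the identity $\phi_1=\sigma-2t$, where $\sigma(t):=t+t_2(t)+t_3(t)$ and $P_j(t)=(a\cos t_j,b\sin t_j)$. Indeed an ACT side has a vertex of the $3$-periodic as midpoint and is parallel to the opposite side, so $i_1'$, a contact point of the ACT incircle, lies on the line through $P_1$ parallel to $P_2P_3$ and is therefore the second intersection of that line with the EB; and two chords of $x^2/a^2+y^2/b^2=1$ are parallel iff the sums of the eccentric angles of their endpoints are congruent mod $2\pi$ (the chord through angles $\alpha,\beta$ has direction $(-a\sin\frac{\alpha+\beta}{2},\,b\cos\frac{\alpha+\beta}{2})$), which applied to $P_1i_1'$ and $P_2P_3$ gives $\phi_1+t\equiv t_2+t_3\pmod{2\pi}$. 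Thus $\phi_1'=\sigma'-2=t_2'+t_3'-1$.

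Next I would locate the extrema of $\phi_1'$. Reflection of the billiard in the minor axis ($t\mapsto\pi-t$), in the major axis ($t\mapsto-t$), and the half-turn ($t\mapsto t+\pi$) each carry the $3$-periodic family to itself, and by uniqueness of the $3$-periodic through a boundary point they force, for $h:=t_2+t_3$,
\[
h(\pi-t)+h(t)=2\pi,\qquad h(-t)+h(t)=0,\qquad h(t+\pi)=h(t)+2\pi .
\]
Hence $\phi_1'=h'-1$ is an even function about each of $t=0,\frac\pi2,\pi,\frac{3\pi}2$; so $\phi_1'$ is critical exactly at these four ``isosceles'' instants, with $\phi_1'(\frac\pi2)=\phi_1'(\frac{3\pi}2)$ and $\phi_1'(0)=\phi_1'(\pi)$. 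The computation below gives $\phi_1'(\frac\pi2)\le 1$, while the analogous computation at the sideways configuration $t=0$ gives $\phi_1'(0)>1$ for $a>b$; so the minimum of $\phi_1'$ is attained at $t=\frac\pi2,\frac{3\pi}2$, i.e.\ exactly when $i_1'$ sits at the EB's top resp.\ bottom vertex. Granting that these four are the only critical points of $\phi_1'$ (so that $\phi_1'$ is alternately strictly decreasing and strictly increasing on the four quarter-periods), $\phi_1'$ has $0$, $2$ (tangential), or $4$ zeros per period according as $\min\phi_1'$ is positive, zero, or negative, and in the last case they form a pair straddling $t=\frac\pi2$ and a pair straddling $t=\frac{3\pi}2$ --- near the EB's top and bottom vertices.

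It remains to compute $\phi_1'(\frac\pi2)$ in terms of $a/b$. I would use the elementary confocal chord relation: the EB chord with endpoint eccentric angles $r,s$ is tangent to the confocal conic $\frac{x^2}{a^2-\mu}+\frac{y^2}{b^2-\mu}=1$ iff $\sin^2\frac{r-s}{2}=\mu\bigl(\frac{\cos^2 p}{a^2}+\frac{\sin^2 p}{b^2}\bigr)$ with $p=\frac{r+s}{2}$ --- proved by writing a general tangent of that conic as $\frac{x\cos\psi}{\sqrt{a^2-\mu}}+\frac{y\sin\psi}{\sqrt{b^2-\mu}}=1$, imposing that $P(r)$ and $P(s)$ lie on it, and eliminating $\psi$. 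For a $3$-periodic all three sides obey this with the common caustic parameter $\mu$. At the upright isosceles configuration, eccentric angles $\frac\pi2$ and $\frac\pi2\pm w_0$, the (coinciding) equal-side equations and the base equation give, after eliminating $\mu$, $\cos^2\frac{w_0}{2}=\frac{\delta-b^2}{2c^2}$, hence $\cos w_0=\frac{\delta-a^2}{c^2}$ and $\mu=b^2\sin^2 w_0$. Perturbing $t=\frac\pi2+\varepsilon$, the two equal-side tangency equations (the base one then automatic by Poncelet closure) decouple, are each linear in the corresponding first-order vertex displacement, and are solved by the common displacement $\frac{1-\lambda}{1+\lambda}\varepsilon$ with $\lambda:=\frac{\mu c^2}{a^2b^2}=\frac{c^2\sin^2 w_0}{a^2}$. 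Therefore $\sigma(\frac\pi2+\varepsilon)=\frac{3\pi}{2}+\frac{3-\lambda}{1+\lambda}\varepsilon+O(\varepsilon^2)$ and
\[
\phi_1'\!\Bigl(\tfrac\pi2\Bigr)=\frac{1-3\lambda}{1+\lambda},\qquad
\lambda=\frac{(2a^2-b^2-\delta)(\delta-b^2)}{a^2 c^2}.
\]
So $\phi_1'(\frac\pi2)\ge 0\iff\lambda\le\frac13$; and $\lambda=\frac13$ simplifies (clear $\delta$ to get $3\delta=2a^2+b^2$, then square) to $5a^4-13a^2b^2+8b^4=0$, i.e.\ $a/b=\sqrt{8/5}=2\sqrt{2/5}=\alpha_{act}$, the spurious root $a=b$ being the circular EB. This yields the three regimes for $i_1'$, hence for $i_2',i_3'$ by the cyclic relabelling.

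The delicate point --- the one I have granted --- is that $\phi_1'$ has no critical points besides the four isosceles instants (equivalently, $h''$ has exactly four zeros per period), which is what pins the count to \emph{exactly} four reversals; I would settle it either from the explicit rational-trigonometric formulas for $t_2(t),t_3(t)$ in \cite{garcia2019-incenter} --- making $\phi_1'(t)=0$ an explicit equation whose root count in $[0,2\pi)$ is bounded by its degree and equals $4$ --- or by a direct monotonicity argument for $h'$ on each quarter-period. Everything else is a short, if careful, computation.
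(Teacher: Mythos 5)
Your argument is correct and lands on the same threshold as the paper ($5a^2-8b^2=0$, i.e.\ $a/b=\sqrt{8/5}=2\sqrt{2/5}=\alpha_{act}$), but by a genuinely different route. The paper's proof is a direct symbolic computation: it forms the ACT by double-length reflection about $X_2$, applies the Contact-Triangle trilinears $0:s_1s_3/(s_1-s_2+s_3):\cdots$ to get Cartesian coordinates of $i_1'(t)$, and solves $x_1'(t)\vert_{t=\pi/2}=0$ (the tangential velocity at the top vertex), obtaining $5a^2-8b^2=0$; the three regimes are then asserted. You replace the coordinate computation by two structural facts: that $i_1'$ is the second intersection of the EB with the ACT side through $P_1$ parallel to $P_2P_3$, which via the eccentric-angle criterion for parallel chords gives the clean identity $\phi_1=t_2+t_3-t$; and that the confocal (Joachimsthal) tangency relation linearizes at the isosceles configuration to give $\phi_1'(\pi/2)=(1-3\lambda)/(1+\lambda)$ with $\lambda=\mu c^2/(a^2b^2)$. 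I checked the intermediate identities ($\cos^2(w_0/2)=(\delta-b^2)/(2c^2)$, $\mu=b^2\sin^2 w_0$, the factorization $5a^4-13a^2b^2+8b^4=(a^2-b^2)(5a^2-8b^2)$) and they are right. Your approach buys more than the paper's: the symmetry argument explains \emph{why} the critical configuration is the top/bottom vertex, shows that reversals (when present) straddle those vertices in pairs, and ties the threshold to the caustic parameter rather than to an opaque polynomial. The one step you explicitly grant --- that $t=0,\pi/2,\pi,3\pi/2$ are the \emph{only} critical points of $\phi_1'$, which is what forces \emph{exactly} four reversals rather than at least four --- is a real gap, but the paper's own proof does not address it either (it computes only the vanishing condition at $t=\pi/2$ and concludes ``this yields the result''), so you are not missing anything the authors supply; closing it via the explicit $t_2(t),t_3(t)$ of \cite{garcia2019-incenter}, as you suggest, is the pragmatic fix.
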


\begin{proof}
%
As before, let a 3-periodic $P_1P_2P_3$ be parameterized by a leading vertex $$P_1(t)= (x_1, y_1) = (a\cos t, b\sin t).$$ 

Its ACT $P_i$' is given by double-length reflections of $P_i$ about the Barycenter $X_2$ \cite{mw}. Taking the ACT as the reference triangle, use Intouchpoint Trilinears $0:s_1s_3/(s_1-s_2+s_3)::$ \cite[Contact Triangle]{mw} to compute an Intouchpoint $i_1'(t)=(x_1(t),y_1(t))$ it follows that
$x_1'(t)\mid_{t=\frac{\pi}{2}}=0 $ is equivalent to $5 a^2-8b^2 =0$. This yields the result. 
\end{proof}

\noindent See \cite[PL\#09]{reznik2019-playlist-math-intelligencer} for an animation of the non-monotonic case. As we had been observing the EKG-like graph in Figure~\ref{fig:x12345-feuer-combo} (right), we stumbled upon an unexpected property, namely, the fixed linear relation between a 3-periodic vertex and its corresponding (opposite) Extouchpoint: 

\begin{proposition}
Let $P_i(t)=(x_i,y_i)$ be one of the 3-periodic vertices and $e_i=(x'_i,y'_i)$ be its corresponding Extouchpoint\footnote{Where $P_{i-1}(t)P_{i+1}(t)$ touch the Caustic.}  on the Caustic, where $a_c,b_c$ are the latter's semi-axes. Then\footnote{It can be shown \eqref{eqn:levi} also holds if $x',y'$ are the coordinates of an Excenter and $a_c,b_c$ are the semi-axes of the excentral locus, known to be an ellipse \cite{garcia2019-incenter}.}, for all $t$:

\begin{equation}
\frac{a}{b}\frac{y_i}{x_i}=\frac{a_c}{b_c}\frac{y'_i}{x'_i}
\label{eqn:levi}
\end{equation}

\noindent Equivalently, for $e_i(t')=[a_c \cos{t'},b_c\sin{t'}]$, then $\tan(t)=\tan(t')$, i.e., $t'=t{\pm}\pi$.
\label{lem:extouch}
\end{proposition}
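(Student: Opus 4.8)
\emph{Plan of proof.} The plan is to prove the sharper statement that the opposite Extouchpoint is exactly
\[
e_i=\Bigl(-\tfrac{a_c}{a}\,x_i,\ -\tfrac{b_c}{b}\,y_i\Bigr),
\]
i.e. that $e_i$ is the image of $P_i$ under a point reflection in the common center followed by the radial rescaling taking the EB to the Caustic; for $P_i=(a\cos t,\,b\sin t)$ this reads $e_i=(a_c\cos(t\pm\pi),\,b_c\sin(t\pm\pi))$, and then \eqref{eqn:levi} together with $t'=t\pm\pi$ fall out immediately. I would parametrize the three vertices by eccentric angles, $P_i=(a\cos t_i,b\sin t_i)$, and set $u=a_c/a$, $w=b_c/b$.

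First I would assemble two ingredients. \emph{(i)}~The chord $P_iP_j$ has equation $\tfrac{x}{a}\cos\tfrac{t_i+t_j}{2}+\tfrac{y}{b}\sin\tfrac{t_i+t_j}{2}=\cos\tfrac{t_i-t_j}{2}$, and a line $px+qy=r$ is tangent to the Caustic $(x/a_c)^2+(y/b_c)^2=1$ iff $a_c^2p^2+b_c^2q^2=r^2$; hence the statement ``$P_iP_j$ is tangent to the Caustic'' is exactly
\[
u^2\cos^2\tfrac{t_i+t_j}{2}+w^2\sin^2\tfrac{t_i+t_j}{2}=\cos^2\tfrac{t_i-t_j}{2},
\]
which holds for all three sides of a $3$-periodic. \emph{(ii)}~The $3$-periodic Caustic (the Mandart inellipse) has semi-axes $a_c=a(\delta-b^2)/c^2$, $b_c=b(a^2-\delta)/c^2$ — as one reads off the isosceles $3$-periodic data of \S\ref{sec:orthic-incenter} — so that
\[
u+w=\frac{(\delta-b^2)+(a^2-\delta)}{c^2}=1 ,
\]
with confocality $a_c^2-b_c^2=c^2$ then automatic.

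With these in hand the computation collapses. Rewriting \emph{(i)} as $\cos(t_i-t_j)=u^2+w^2-1+(u^2-w^2)\cos(t_i+t_j)$ and substituting it, together with the product-to-sum identities and $\tfrac1u+\tfrac1w=\tfrac1{uw}$ (valid since $u+w=1$), into $\tfrac{a}{a_c}\cos t_i\cos t_j+\tfrac{b}{b_c}\sin t_i\sin t_j$ yields
\[
\frac{x_ix_j}{aa_c}+\frac{y_iy_j}{bb_c}=\frac{u^2+w^2-1+(u-w)(u+w-1)\cos(t_i+t_j)}{2uw}=\frac{u^2+w^2-1}{2uw}=-1
\]
for every pair $i\ne j$, the last equalities because $u+w=1$ annihilates the $\cos(t_i+t_j)$ term and gives $u^2+w^2-1=(u+w)^2-2uw-1=-2uw$. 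Fixing $i$, the line $\ell_i:\ \tfrac{xx_i}{aa_c}+\tfrac{yy_i}{bb_c}=-1$ therefore passes through $P_{i-1}$ and $P_{i+1}$, so it is the side opposite $P_i$; and since $(x_i/a)^2+(y_i/b)^2=1$, writing $\ell_i$ as $(-x_i/a)\tfrac{x}{a_c}+(-y_i/b)\tfrac{y}{b_c}=1$ exhibits it as the tangent to the Caustic at $(-a_cx_i/a,\,-b_cy_i/b)$. That tangency point is the Extouchpoint $e_i$, which is the claimed formula, whence \eqref{eqn:levi} and $t'=t\pm\pi$.

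The only step that is not routine bookkeeping is ingredient \emph{(ii)}: the whole cancellation rests on the triangle-case Cayley relation $u+w=1$ for the Caustic; without it the $\cos(t_i+t_j)$ term survives and no such linear identity relating $P_i$ and $P_j$ holds. A small care point is that $1-u^2$ and $1-w^2$ are distinct (only $a^2(1-u^2)=b^2(1-w^2)$), so they must not be symmetrized prematurely. The Excenter assertion in the footnote is handled by the identical argument, with $(a_c,b_c)$ replaced by the semi-axes of the (elliptic) excentral locus and the corresponding analogue of $u+w=1$.
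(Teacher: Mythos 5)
Your proof is correct, but it takes a genuinely different route from the paper's. The paper disposes of this proposition in one line by invoking Lemma~3 of the Poncelet-grid paper of Levi and Tabachnikov (and remarks that a CAS verification of the parametrics is an alternative); you instead give a self-contained elementary argument. Your two ingredients check out: the eccentric-angle chord equation plus the tangency criterion for $px+qy=r$ against the Caustic is exactly the statement that each side of the 3-periodic touches the Caustic, and the explicit semi-axes $a_c=a(\delta-b^2)/c^2$, $b_c=b(a^2-\delta)/c^2$ do give $u+w=1$ (these formulas are quoted in the paper immediately after the proposition, in Section~\ref{sec:act}, not in Section~\ref{sec:orthic-incenter} as you cite). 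The cancellation is right: with $u+w=1$ the coefficient $(u^2-w^2)+(w-u)=(u-w)(u+w-1)$ of $\cos(t_i+t_j)$ vanishes and $u^2+w^2-1=-2uw$, giving $\tfrac{x_ix_j}{aa_c}+\tfrac{y_iy_j}{bb_c}=-1$ for every pair of vertices; fixing $i$, this line through $P_{i-1},P_{i+1}$ is visibly the tangent to the Caustic at $(-a_cx_i/a,-b_cy_i/b)$, which is $e_i$ by the footnote's definition of the Extouchpoint. What each approach buys: the paper's citation is shorter and places the fact inside the general Poncelet-grid theory (valid for all $N$ and all grid conics), whereas your proof is elementary, triangle-specific, and actually delivers more than the stated proposition --- the exact closed form $e_i=(-a_cx_i/a,\,-b_cy_i/b)$ and the polarity-type identity $\tfrac{xx_i}{aa_c}+\tfrac{yy_i}{bb_c}=-1$ for the opposite side, from which \eqref{eqn:levi} and $t'=t\pm\pi$ are immediate. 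It also isolates the one structural fact the cancellation hinges on, namely $a_c/a+b_c/b=1$ for the $N=3$ confocal caustic. The only soft spot is the footnote about Excenters, which you (like the paper) only sketch: the ``identical argument'' there would require exhibiting the analogous linear relation for the excentral locus's semi-axes, which you assert but do not supply.
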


\begin{proof}
This property follows directly\footnote{We thank A. Akopyan for pointing this out.} from \cite[Lemma 3]{levi2007-poncelet-grid}.
\end{proof}

In fact,  more general properties of the 
``Poncelet grid'' are described in 
the reference above. The  result reported here is a particular case and can also be demonstrated by simplifying rather long symbolic parametrics with a Computer Algebra System (CAS).

Furthermore, since  $a_c=(\delta-{b}^{2})a/c^2$ and $b_c=({a}^{2}-\delta)b/c^2$ \cite{garcia2020-new-properties}: 

\begin{equation}
\frac{y}{x}=\left(\frac{\delta-b^2}{a^2-\delta}\right)\frac{y'}{x'}
\end{equation}

\begin{figure}[H]
    \centering
    \includegraphics[width=\textwidth]{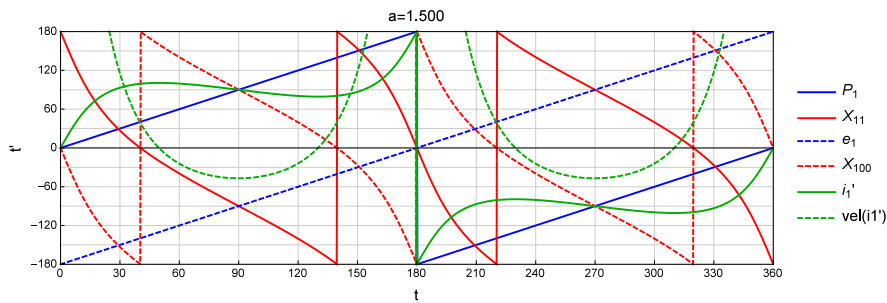}
    \caption{The ``EKG'' of ACT motion, to be interpreted as a flat torus. On the horizontal axis is parameter $t$ of $P_1(t)=[a\cos(t),b\sin(t)]$, with $a/b=1.5$. On the vertical is the $t$ parameter for $P_1,X_{11},X_{100}$ and an ACT Intouchpoint $i_1'$. These all lie on the EB and are shown blue, red, dashed red, and green, respectively. Also shown is Extouchpoint $e_1$ on the Caustic (dashed blue). Just for it, the vertical axis represents $t'$ in $e_1=[a_c\cos(t'),b_c\sin(t')]$, where $a_c,b_c$ are the Caustic semi-axes. By Proposition~\ref{lem:extouch}, $t'=t{\pm}\pi$. Notice the only non-monotonic motion is that of $i_1'$ since $a/b>\alpha_{act}{\simeq}1.265$. To see this, a $\text{vel}(i_1')$ of its velocity is also shown (dashed green), containing two negative regions corresponding to 4 critical points of position. For $vel(i_1')$ ignore units and the fact that values near $0^\circ,180^\circ$ are not shown, these are all positive and above the vertical scale.}
    \label{fig:act-progress}
\end{figure}

\subsection{A Non-Monotonic Triangle Center}
\label{sec:x88}
Dovetailing into the non-monotonicity of the ACT's Intouchpoints is a similar behavior by $X_{88}$, the Isogonal Conjugate of $X_{44}$, known to lie on the EB and to be to be collinear with $X_1$ and $X_{100}$, \cite{etc}. The latter is verified by the vanishing determinant of the 3x3 matrix whose rows are the trilinears of $X_1,X_{100},X_{88}$  \cite[under ``Collinear'', eqn 9]{mw}:

\[
\det\left[
\begin {array}{ccc}
1&1&1\\
 \frac{1}{s_2-s_3} & \frac{1}{s_3-s_1} & \frac{1}{s_1-s_2} \\ 
\frac{1}{s_2+s_3-2\,s_1}& \frac{1}{s_1+s_3-2\,s_2} & \frac{1}{s_1+s_2-2\,s_3}
\end{array}
\right]{\equiv}\,0
\]

\smallskip

Furthermore $X_{100}$ is a very special point: it lies on the EB and on the Circumcircle simultaneously \cite{etc}. Let $\alpha_{88}=(\sqrt{6+2\sqrt{2}}\,)/2\simeq{1.485}$.

\begin{proposition}
At $a/b=\alpha_{88}$, the y velocity of $X_{88}$ vanishes when the 3-periodic is a sideways isosceles.
\end{proposition}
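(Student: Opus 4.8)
The plan is to follow the template of the ACT-Intouchpoint proposition above: parametrize a leading vertex as $P_1(t)=(a\cos t,b\sin t)$, carry along the closed forms for $P_2(t),P_3(t)$ and the side lengths $s_1(t),s_2(t),s_3(t)$ from \cite{garcia2019-incenter}, place $X_{88}$ in Cartesian coordinates via its trilinears, and then impose a single vanishing condition at the parameter value realizing the sideways isosceles 3-periodic. Concretely, I would first record the trilinears of $X_{88}$, namely $\frac{1}{s_2+s_3-2s_1}:\frac{1}{s_1+s_3-2s_2}:\frac{1}{s_1+s_2-2s_3}$ (the third row of the collinearity determinant above), convert to barycentrics by multiplying the $i$-th entry by $s_i$, call the result $\beta_1(t):\beta_2(t):\beta_3(t)$, and write $X_{88}(t)=(\sum_i\beta_i(t)P_i(t))/(\sum_i\beta_i(t))$ as an affine combination of the three vertices.

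Next I would specialize to $t=0$, i.e.\ $P_1=(a,0)$. The 3-periodic at parameter $-t$ is the mirror image, across the $x$-axis, of the one at parameter $t$ (the reflection $R$ preserves the EB and its confocal caustic, and the 3-periodic through a given boundary point is unique), so at $t=0$ the 3-periodic is isosceles with $P_2=(-u,v)$, $P_3=(-u,-v)$, where $u=a(\delta-b^2)/c^2$ and $v=b^2\sqrt{2\delta-a^2-b^2}/c^2$ are exactly the quantities used in the orthic-right-triangle proof above. Being a triangle center, $X_{88}$ is fixed by $R$, hence lies on the $x$-axis; being on the EB, it then sits at a major-axis vertex $(\pm a,0)$, and in particular $y_{88}(0)=0$ automatically. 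Moreover $X_{88}$ commutes with isometries, so $X_{88}(-t)=R\,X_{88}(t)$, whence $y_{88}(-t)=-y_{88}(t)$: the coordinate $y_{88}(t)$ is an odd function of $t$ about $t=0$. Therefore ``the $y$-velocity of $X_{88}$ vanishes at the sideways isosceles'' is equivalent to the linear Taylor coefficient of $y_{88}(t)$ at $t=0$ being zero; equivalently, writing $X_{88}=(a\cos\tau(t),b\sin\tau(t))$, to $\tau'(0)=0$, i.e.\ to $X_{88}$ momentarily stopping on the boundary at the vertex --- the onset of its non-monotonic regime.

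To extract that coefficient I would substitute the first-order expansions $P_1(t)=(a,bt)+O(t^2)$, together with the induced $O(t)$ perturbations of $P_2,P_3$ and of the $s_i$, into the affine-combination formula for $X_{88}$, read off the $O(t)$ part of its $y$-component, clear the radical $\sqrt{2\delta-a^2-b^2}$, and reduce using $\delta^2=a^4-a^2b^2+b^4$. I expect the vanishing condition to collapse --- after discarding factors (powers of $b$, of $c^2$, and $\delta$-combinations) that cannot vanish for $a>b>0$ --- to the single polynomial relation $4a^4-12a^2b^2+7b^4=0$; since $a>b>0$ this forces $(a/b)^2=(3+\sqrt{2})/2$, i.e.\ $a/b=\sqrt{6+2\sqrt{2}}/2=\alpha_{88}$. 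The other sideways isosceles 3-periodic, at $t=\pi$ with apex $(-a,0)$, produces the same relation by the extra reflection symmetry of the EB about the $y$-axis, so no new case arises; and since the Poritsky--Lazutkin reparametrization has nowhere-vanishing derivative, the conclusion is independent of which natural parameter one differentiates in.

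The main obstacle is the symbolic bookkeeping: the first-order expansion of $X_{88}$ about the symmetric configuration inherits the nested radicals present in $P_2,P_3$ and the $s_i$, so this step is best handed to a CAS, exactly as the analogous ACT and orthic computations were. What keeps it finite and clean is that only one Taylor coefficient is needed, evaluated at a highly symmetric configuration where most quantities become rational in $a,b$, and that the final condition is a rational polynomial in $a,b$; one must only check along the way that the barycentric normalizer $\sum_i\beta_i(t)$ does not vanish at $t=0$, which it does not, since $X_{88}$ is a finite point there.
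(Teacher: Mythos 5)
Your proposal is correct and follows essentially the same route as the paper: parametrize $P_1(t)=(a\cos t,b\sin t)$, place $X_{88}$ via its trilinears, evaluate at the sideways isosceles $t=0$ where $X_{88}$ sits at a major-axis vertex, and impose $y_{88}'(0)=0$, which reduces to $4(a/b)^4-12(a/b)^2+7=0$ and selects $\alpha_{88}=\sqrt{6+2\sqrt{2}}/2$ as the admissible root. The symmetry remarks you add (oddness of $y_{88}$, equivalence of $t=0$ and $t=\pi$) are sound but not needed beyond what the paper's computation already does.
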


\begin{proof}
Parametrize $P_1(t)$ in the usual way. At $t=0$, $P_1=(a,0)$ it can be easily checked that $X_{88}=(-a,0)$. Solve $y_{88}'(t)|_{t=0}=0$ for $a/b$. After some algebraic manipulation, this equivalent to solving $4x^4-12x^2+7=0$, whose positive roots are $(\sqrt{6\pm 2\sqrt{2}}\,)/2$. $\alpha_{88} $ is the largest of the two.
\end{proof}

As shown in Table~\ref{tab:x88}, there are three types of $X_{88}$ motion with respect to $P_1(t)$: monotonic, with stops at the EB vertices, and non-monotonic.

{
\begin{table}[!htbp]
\begin{tabular}{|c|l|l|}
\hline
$a/b$ vs. $\alpha_{88}$ & motion & comment \\
\hline
$<$ & CW & monotonic\\
$=$ & CW & stops at EB vertices \\
$>$ & CW+CCW & non-monotonic \\
\hline
\end{tabular}
\caption{Conditions for the type of motion of $X_{88}$ with respect to $P_1(t)$. \textbf{Video}: \cite[PL\#11]{reznik2020-playlist-intriguing}}
\label{tab:x88}
\end{table}
}

An equivalent statement is that the line family $X_1X_{100}$ is instantaneously tangent to its {\em envelope} \cite{mw} at $X_{88}$. Figure~\ref{fig:x88-envelope} shows that said envelope lies (i) entirely inside, (ii) touches at vertices of, or (iii) is partially outside, the EB, when $a/b$ is less than, equal, or greater than $\alpha_{88}$, respectively. Each such case implies the motion of $X_{88}$ is (i) monotonically opposite to $P_1(t)$, (ii) opposite but with stops at the EB vertices, or (iii) is non-monotonic.

The reader is challenged to find an expression for parameter $t$ in $P_1(t)$ where the motion of $X_{88}$ changes direction. The following additional facts are also true for $X_{88}$:

\begin{proposition}\label{prop:x88}
$X_{88}$ coincides with a 3-periodic vertex if and only if $s_2 = (s_1+s_3)/2$. In this case, $X_1$ is the midpoint between $X_{100}$ and $X_{88}$ \cite{helman20}, Figure~\ref{fig:x88tri} (bottom left)
\end{proposition}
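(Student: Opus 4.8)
The plan is to work in the same elementary-triangle coordinates used throughout Section~\ref{sec:intermezzo}, namely $Q_1=(0,0)$, $Q_2=(1,0)$, $Q_3=(u,v)$ with side lengths $s_1=|Q_3-Q_2|$, $s_2=|Q_3-Q_1|$, $s_3=1$, and to verify both assertions by direct computation with trilinear (or barycentric) coordinates. For the first claim, I would start from the published trilinears of $X_{88}$, namely $\tfrac{1}{s_2-s_3}:\tfrac{1}{s_3-s_1}:\tfrac{1}{s_1-s_2}$ (the isogonal conjugate of $X_{44}$), convert them to barycentrics by multiplying the $i$-th entry by $s_i$, and normalize so the three barycentric weights sum to $1$. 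The point $X_{88}$ then coincides with vertex $Q_1$ precisely when the two barycentric weights attached to $Q_2$ and $Q_3$ vanish; since those two weights are (up to a common nonzero factor) $\tfrac{s_2}{s_3-s_1}$ and $\tfrac{s_3}{s_1-s_2}$, one sees they cannot both be zero unless instead the weight of $Q_1$ carries all the mass, i.e. the barycentric of $X_{88}$ is $(1,0,0)$; clearing denominators, this collapses to a single polynomial condition in $s_1,s_2,s_3$. I expect that condition to factor so that the only geometrically admissible branch is $2s_2=s_1+s_3$. By the symmetry of the trilinear formula under permuting the roles of the vertices, the analogous statements at $Q_2$ and $Q_3$ give $2s_1=s_2+s_3$ and $2s_3=s_1+s_2$; but these three conditions are equivalent up to relabeling, so in the stated normalization ``$X_{88}$ on a vertex'' is exactly ``$s_2=(s_1+s_3)/2$'' for the appropriately-indexed vertex. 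I would double-check the degenerate cases ($s_1=s_2$ etc., where some trilinears blow up) separately, since those are where the formula for $X_{88}$ is a priori undefined — this bookkeeping of spurious roots and of the excluded isosceles configurations is the one place the argument needs care.

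For the second claim — that $X_1$ is then the midpoint of $X_{100}$ and $X_{88}$ — I would use that all three of $X_1$, $X_{100}$, $X_{88}$ are already known to be collinear (the vanishing $3\times 3$ determinant displayed just above Proposition~\ref{prop:x88}), so it suffices to check one affine ratio along that line. Concretely, I would take the normalized barycentric coordinates of $X_1=(s_1:s_2:s_3)$, of $X_{100}$ (trilinears $\tfrac{1}{s_2-s_3}:\cdots$ times the appropriate sign/negation pattern that distinguishes $X_{100}$ from $X_{88}$ — $X_{100}=\tfrac{a}{b-c}:\cdots$ in standard notation), and of $X_{88}$, substitute $s_2=(s_1+s_3)/2$ into all three, normalize each to sum $1$, and verify the vector identity $2X_1 = X_{100}+X_{88}$ componentwise. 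Under the substitution the denominators $s_2-s_3$, $s_3-s_1$, $s_1-s_2$ become $\tfrac{s_1-s_3}{2}$, $s_3-s_1$, $\tfrac{s_1-s_3}{2}$, which are all proportional to $s_1-s_3$, so the normalization constants simplify dramatically and the midpoint relation should reduce to an elementary algebraic identity. Alternatively, once $X_{88}$ is literally a vertex of the 3-periodic, one can invoke the fact that $X_{100}$ lies on the Circumcircle and is the point diametrically/antipodally related to the vertex through known Feuerbach-type incidences, but the barycentric verification is the cleanest route and is essentially forced once the collinearity is in hand.

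The main obstacle I anticipate is not the algebra of the midpoint check (that is short) but rather pinning down which of the three ``candidate'' polynomial factors in the first part is the genuine geometric locus and confirming that no extraneous factor corresponds to an actual vertex-coincidence, together with the consistency of sign conventions between Kimberling's $X_{88}$/$X_{100}$ trilinears and the explicit $P_i$-coordinates of \cite{garcia2019-incenter}: $X_{88}$ and $X_{100}$ have the \emph{same} trilinear denominators $s_j-s_k$ and are distinguished only by a sign pattern, so it is easy to conflate them, and the whole statement (which of $X_{100}$, $X_{88}$ is the midpoint- of-the-other-and-$X_1$) hinges on getting that pattern right. I would resolve this by evaluating all three points numerically at one generic admissible triangle satisfying $2s_2=s_1+s_3$, confirming $X_{88}$ lands on a vertex and $X_1$ is the midpoint, and only then presenting the symbolic verification — matching the ``long symbolic parametrics simplified by CAS'' style already used elsewhere in the paper, and citing \cite{helman20} for the midpoint relation as the authors do.
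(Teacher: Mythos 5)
Your overall strategy---read the vertex-coincidence condition off the trilinears of $X_{88}$, then verify the midpoint relation by a normalized affine computation along the line through $X_1$, $X_{100}$, $X_{88}$---is sound, but there is a concrete error that derails the first half as written: the trilinears you assign to $X_{88}$, namely $\frac{1}{s_2-s_3}:\frac{1}{s_3-s_1}:\frac{1}{s_1-s_2}$, are those of $X_{100}$. The isogonal conjugate of $X_{44}$ is $X_{88}=\frac{1}{s_2+s_3-2s_1}:\frac{1}{s_3+s_1-2s_2}:\frac{1}{s_1+s_2-2s_3}$; this is the third row of the collinearity determinant displayed at the start of Section~\ref{sec:x88}, and it is the formula the paper's proof actually uses. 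With the coordinates you wrote down, clearing denominators and demanding that two weights vanish yields the isosceles conditions $s_i=s_j$ --- the criterion for $X_{100}$ to sit on a vertex --- not $s_2=(s_1+s_3)/2$, and no choice of ``geometrically admissible branch'' recovers the stated condition. Relatedly, your closing remark that $X_{88}$ and $X_{100}$ share the denominators $s_j-s_k$ and differ only by a sign pattern is false, and the distinction is exactly what the proposition turns on.

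Once the correct trilinears are in hand, your plan goes through with far less bookkeeping than you anticipate. Clearing denominators gives $X_{88}=(s_3+s_1-2s_2)(s_1+s_2-2s_3):(s_2+s_3-2s_1)(s_1+s_2-2s_3):(s_2+s_3-2s_1)(s_3+s_1-2s_2)$, so a single vanishing factor $s_3+s_1-2s_2=0$ kills the first and third coordinates and leaves $0:1:0$, i.e.\ the vertex $P_2$: there are no extraneous factors to exclude, only relabelings at the other two vertices. The midpoint check then works exactly as you describe: under $s_2=(s_1+s_3)/2$ the trilinears of $X_{100}$ collapse to $2:-1:2$, its normalizer $2s_1-s_2+2s_3$ equals $3s_2=s_1+s_2+s_3$, and $2X_1-X_{100}=P_2=X_{88}$ follows in one line from \eqref{eqn:trilin-cartesian}. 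That is a genuinely different, and arguably tidier, route for the second claim than the paper's, which instead places $P_1=(-1,0)$, $P_3=(1,0)$, constrains $P_2$ to the ellipse $s_1+s_3=4$, and intersects the bisector from $P_2$ with the circumcircle to locate $X_{100}$ before checking the midpoint in Cartesians --- but your route only becomes available after the $X_{88}$/$X_{100}$ mix-up is repaired.
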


\begin{proof}
The first trilinear coordinate of $X_{88}$ is $1/(s_2 + s_3 - 2 s_1)$ \cite{etc}, and of a vertex is $0$. Equating the two yields $s_2 = (s_1+s_3)/2$.
Consider a triangle of reference $P_1=(-1,0)$, $P_2=(u,v)$, $P_3=(1,0)$. Its circumcircle is given by  $v(x^2+y^2)+(1-u^2-v^2)y-v=0.$ Under the hypothesis $s_2=(s_1+s_3)/2$ it follows that $v=\sqrt{12-3u^2}/2$, $s_1=2-u/2$, $s_2=2$ and $s_3=2+u/2$.  Therefore the incenter is   $ I= (s_1P_1+s_2P_2+s_3P_3)/(s_1+s_2+s_3)=(u/2, \sqrt{12-3u^2}/6)$. The intersection of the straight line passing through $P_2$ and $I$ with the circumcircle of the triangle o reference is the point $D=(0,\sqrt{12-3u^2}/6).$
Therefore, $I$ is the midpoint of $B$ and $D=X_{100}$. Moreover, $\mid P_1-D\mid=\mid P_2-D\mid=\mid I-D\mid= \sqrt{48-3u^2}/6$.
\end{proof}

\begin{figure}
    \centering
    \includegraphics[width=\textwidth]{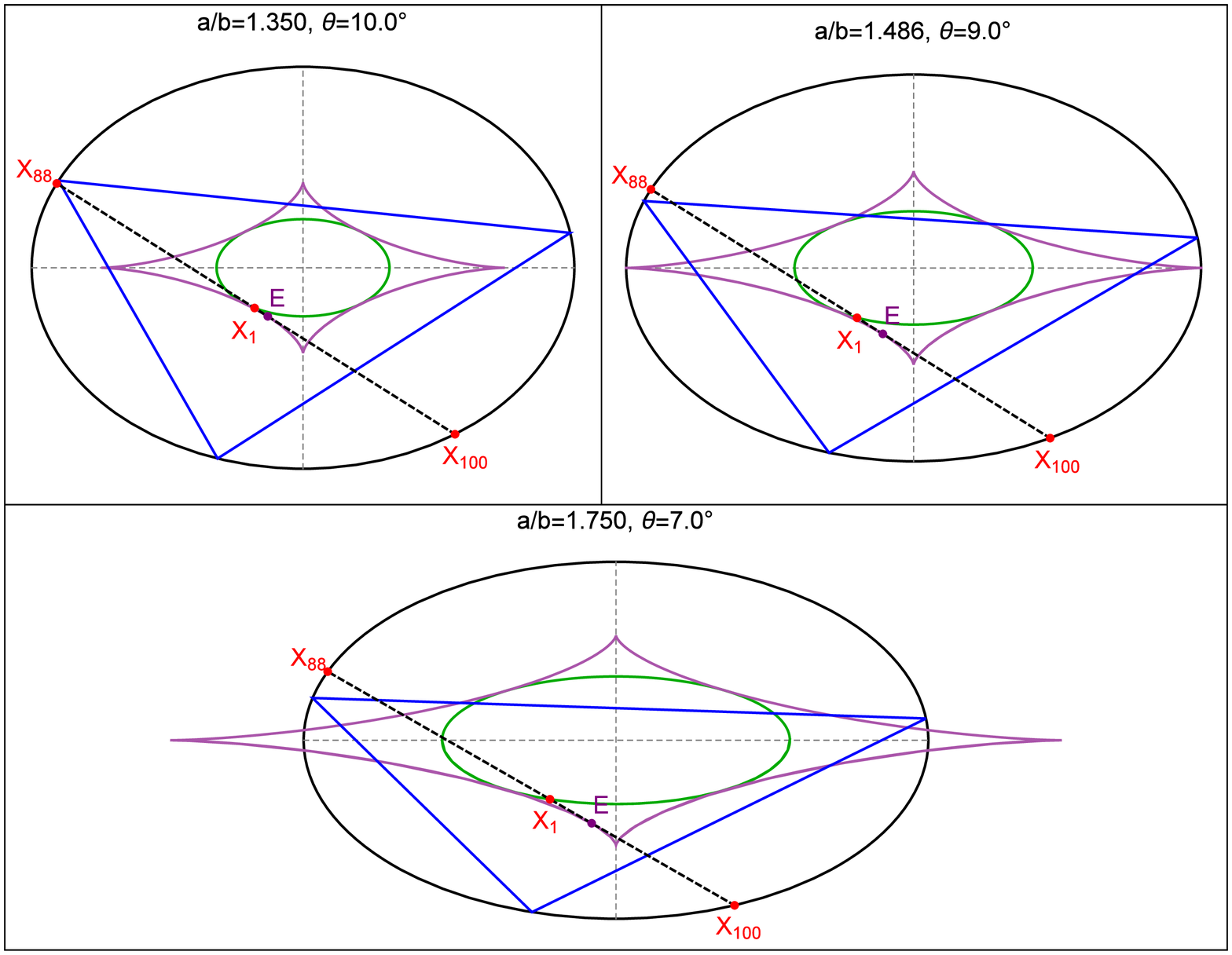}
    \caption{Collinear points $X_1,X_{100},X_{88}$ shown for billiards with $a/b$ less than (top-left), equal (top-right), or greater (bottom) than $\alpha_{88}{\simeq}1.486$, respectively. In each such case, the motion of $X_{88}$ relative to the 3-periodic vertices will be monotonic, with stops at the vertices, or non-monotonic, respectively. Equivalently, the motion of $X_{88}$ is opposite to $P_1$, stationary, or in the direction of $P_1$ if the instantaneous center of rotation $E$ of line $X_1X_{100}$ lies inside, on, or outside the EB. The locus of $E$ (the envelope of $X_1X_{100}$) is shown purple. Notice it only ``pierces'' the EB when $a/b>\alpha_{88}$ (bottom), i.e., only in this case can the motion of $X_{88}$ be non-monotonic. \textbf{Video}: \cite[PL\#12]{reznik2020-playlist-intriguing}}
    \label{fig:x88-envelope}
\end{figure}

It is well-known that the only right-triangle with one side equal to the average of the other two is $3:4:5$. Let $\alpha_{88}^\perp=(7+\sqrt{5})\sqrt{11}/22\simeq{1.3924}$. Referring to Figure~\ref{fig:x88tri} (right):

\begin{proposition}
The only EB which can contain a 3:4:5 3-periodic has an aspect ratio $a/b=\alpha_{88}^\perp$.
\end{proposition}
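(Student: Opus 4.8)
The plan is to pin down the unique ellipse that carries a $3{:}4{:}5$ triangle as a billiard $3$-periodic and then read off its aspect ratio. All $3{:}4{:}5$ triangles are similar, and the circumscribing Elliptic Billiard (``circumbilliard'') of a $3$-periodic is unique and depends on the triangle only up to similarity; hence it suffices to fix one convenient copy, build its circumbilliard, and compute $a/b$ once. This also settles the word ``only'': the aspect ratio will come out forced.

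Here are the concrete steps, in order. First, place the right triangle as $C=(0,0)$, $A=(4,0)$, $B=(0,3)$, so the sides opposite $A,B,C$ have lengths $a=3$, $b=4$, $c=5$. Next, use the description recalled above that the EB is the circumconic \emph{centered at} the Mittenpunkt $X_9$: its barycentrics are $a(b{+}c{-}a):b(c{+}a{-}b):c(a{+}b{-}c)=18:16:10$, so the center is $M=(9A+8B+5C)/22=(18/11,\,12/11)$. Then write the general conic with center $M$ as
\[
p\Bigl(x-\tfrac{18}{11}\Bigr)^2+q\Bigl(x-\tfrac{18}{11}\Bigr)\Bigl(y-\tfrac{12}{11}\Bigr)+r\Bigl(y-\tfrac{12}{11}\Bigr)^2=s,
\]
and impose passage through $A$, $B$, $C$. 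Subtracting the relation at $C$ from those at $A$ and at $B$ eliminates $s$ and leaves two linear equations whose solution is $(p,q,r)\propto(3,2,4)$; the remaining relation then gives $s=1980/121>0$, so the conic is a bona fide ellipse.

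Finally, the aspect ratio is dictated solely by the symmetric matrix $\left(\begin{smallmatrix}3&1\\1&4\end{smallmatrix}\right)$ of the quadratic part, whose eigenvalues are $(7\pm\sqrt5)/2$ — both positive, a convenient check that we really have an ellipse. Since the semi-axes are proportional to $\lambda_{\min}^{-1/2}$ and $\lambda_{\max}^{-1/2}$,
\[
\frac{a}{b}=\sqrt{\frac{7+\sqrt5}{7-\sqrt5}}=\frac{7+\sqrt5}{\sqrt{44}}=\frac{(7+\sqrt5)\sqrt{11}}{22}=\alpha_{88}^{\perp}.
\]

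I expect no deep difficulty; the two points demanding care are (i) invoking the correct description of the billiard — it is the circumconic \emph{centered at} $X_9$, not the circumconic with \emph{perspector} $X_9$, which would give a different (wrong) conic — and (ii) observing that the outcome is independent of how the $3{:}4{:}5$ triangle is labeled or positioned, which is precisely the uniqueness of the circumbilliard together with the similarity-invariance of the construction; this same uniqueness is what licenses the word ``only'' in the statement.
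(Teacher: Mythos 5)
Your proof is correct and follows essentially the same route as the paper: fix a concrete $3{:}4{:}5$ right triangle, form its circumbilliard (the $X_9$-centered circumellipse), and read the aspect ratio off the eigenvalues of the quadratic part, landing on the same value $(7+\sqrt5)\sqrt{11}/22$ that the paper obtains from its general formula $[a/b](s_1,s_2,s_3)$. The only difference is cosmetic — you derive the conic from scratch via the $X_9$-center condition rather than quoting the pre-derived $E_9(x,y)$ from the reference, and you make the similarity-invariance argument for ``only'' explicit, which the paper leaves implicit.
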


\begin{proof}
With $a/b>\alpha_4=\sqrt{2\,\sqrt{2}-1}\simeq{1.352}$ the 3-periodic family contains obtuse triangles amongst which there always are 4 right triangles (identical up to rotation and reflection). Consider the elementary triangle $P_1=(0,0)$, $P_2=(s_1,0)$, and $P_3=(s_1,s_2)$ choosing $s_1,s_2$ integers such that $s_3=s_3=\sqrt{s_1^2+s_2^2}$ is an integer. The Circumbilliard \cite{garcia2020-new-properties} is given by:

\[
E_9(x,y)= s_2 x^2+ \left( s_3-s_1-s_2 \right) xy+s_1{y}^{2}-s_1s_2x-s_1 \left( s_1-s_3 \right) y=0.\]

Squaring the ratio of the Eigenvalues of $E_9$'s Hessian yields the following expression for $a/b$:

\begin{equation}
    [a/b](s_1,s_2,s_3)= \frac {s_1+s_2+\sqrt {s_3\left(3\,s_3-2\,s_1-2\,s_2\right) }}{\sqrt { \left( 
s_1+s_2+3\,s_3 \right)  \left( s_1+s_2-s_3 \right) }}
\end{equation}
\end{proof}


\begin{figure}
    \centering
    \includegraphics[width=\textwidth]{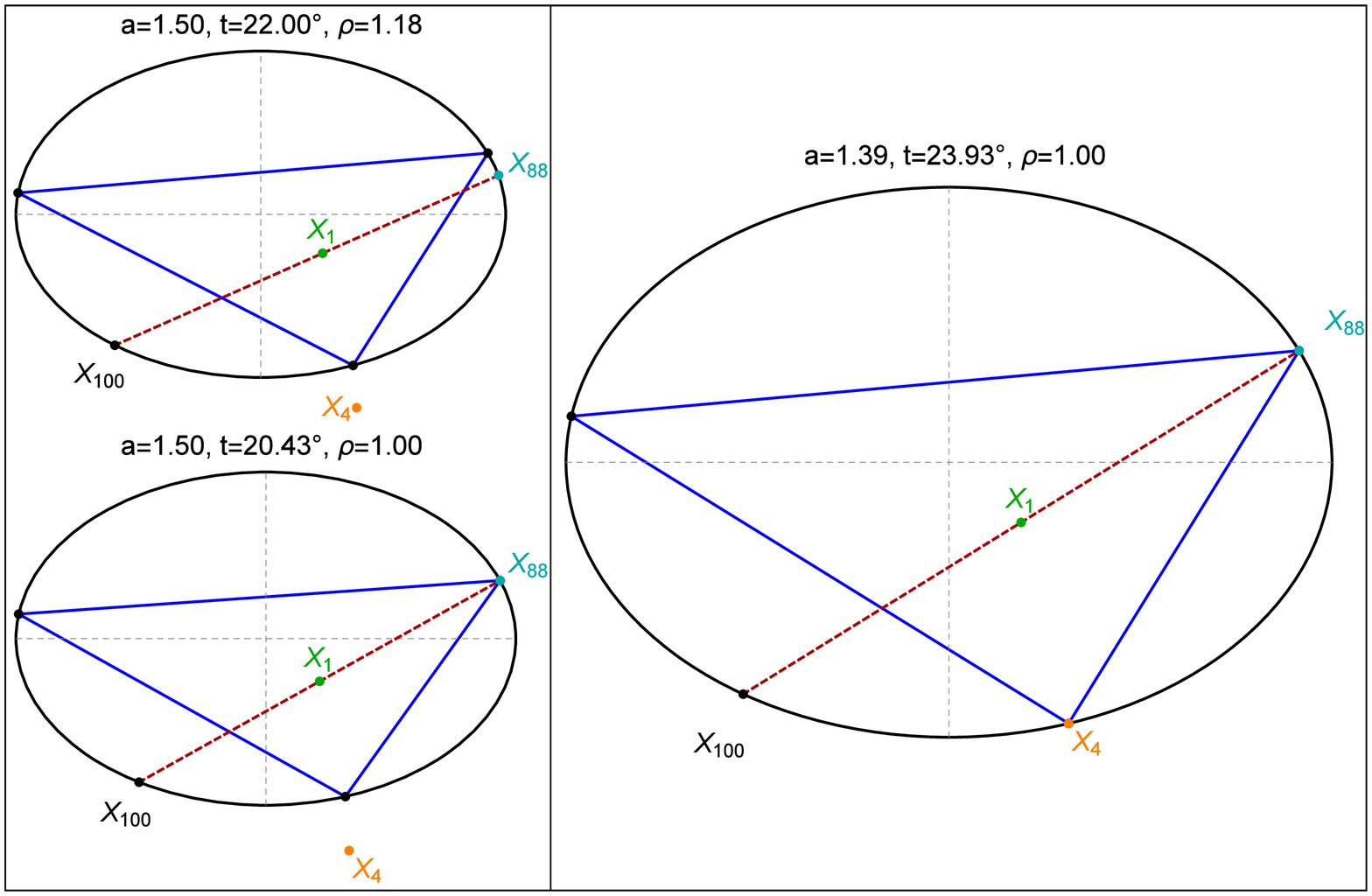}
    \caption{$X_{88}$ is always on the EB and collinear with $X_1$ and $X_{100}$ \cite{etc}. Let $\rho$ (shown above each picture) be the ratio $|X_1-X_{100}|/|X_1-X_{88}|$. \textbf{Top Left}: The particular 3-periodic shown is obtuse ($X_4$ is exterior), and $\rho>1$, i.e., $X_1$ is closer to $X_{88}$. \textbf{Bottom Left}: When $X_{88}$ coincides with a vertex, if sidelengths are ordered as $s_1{\leq}s_2{\leq}s_3$, then $s_2=(s_1+s_3)/2$, and $X_1$ becomes the midpoint of $X_{88}X_{100}$, i.e., $\rho=1$. \textbf{Right}: If $a/b=\alpha_{88}^\perp\,{\simeq}\,1.39$, when $X_{88}$ is on a vertex, the 3-periodic is a $3:4:5$ triangle ($X_4$ lies on an alternate vertex). \textbf{Video}: \cite[PL\#11]{reznik2020-playlist-intriguing}}
    \label{fig:x88tri}
\end{figure}

\noindent Table~\ref{tab:pythagorean} shows $a/b$ for the first 5 Pythagorean triples ordered by hypotenuse\footnote{The $a/b$ which produces $3:4:5$ was first computed in connection with $X_{88}$ \cite{helman20-private-345}.}.

\begin{table}[!h]
$$
\scriptsize
\begin{array}{r|c|l|l}
(s_1,s_2,s_3) & a/b & {\simeq}a/b \\
\hline
3, 4, 5 & {(7+\sqrt{5})\sqrt{11}}/{22} &  1.392  \\
5, 12, 13 & {\sqrt{14} (\sqrt{65}+17)}/{56} & 1.674  \\
8, 15, 17 & {\sqrt{111} (\sqrt{85}+23)}/{222} & 1.529 \\
7, 24, 25  & {\sqrt{159} (5\,\sqrt{13}+31)}/{318} & 1.944 \\
20, 21, 29 & \sqrt{6} (\sqrt{145}+41)/{96} & 1.353 &   \\
\end{array}
$$
\caption{First 5 Pythagorean triples ordered by hypotenuse. $a/b$ is the aspect ratio.
of the EB which produces 4 triangles homothetic to the triple.}
\label{tab:pythagorean}
\end{table}

\subsection{Swan Lake}
\label{sec:swan-lake}
\noindent
\epigraph{But now they drift on the still water, \\
Mysterious, beautiful; \\
Among what rushes will they build, \\
By what lake's edge or pool \\
Delight men's eyes when I awake some day \\
 To find they have flown away?}{\textit{W.B. Yeats}}

In addition to $X_{88}$ and $X_{100}$, a gaggle of 50+ other TCs are also known to lie on the EB \cite[X(9)]{etc}, e.g., $X_{162}$, $X_{190}$, $X_{651}$, etc. We have found experimentally that the motion of $X_{100}$ and $X_{190}$ is always monotonic.

In contradistinction: Let $\alpha_{162}{\simeq}1.1639$ be the only positive root of $5 x^8 + 3 x^6 - 32 x^4 + 52 x^2 - 36$.

\begin{proposition}\label{prop:x162}
The motion of $X_{162}$ with respect to $P_1(t)$ is non-monotonic if $a/b>\alpha_{162}$. 
\end{proposition}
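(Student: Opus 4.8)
The plan is to mimic the pattern established for $X_{88}$ in the proof of the previous $\alpha_{88}$-Proposition, but now tracking the $X_{162}$ point with a sign-of-velocity argument at a distinguished symmetric configuration. First I would parametrize a leading vertex in the standard way, $P_1(t)=(a\cos t,b\sin t)$, and use the explicit formulas for $P_2(t),P_3(t)$ from \cite{garcia2019-incenter}. Feeding these into the trilinears of $X_{162}$ (the isogonal conjugate of $X_{54}$ in Kimberling's list \cite{etc}), I would obtain a parametrization $X_{162}(t)=(x_{162}(t),y_{162}(t))$; since $X_{162}$ lies on the EB for all $t$, it is convenient to write $X_{162}(t)=(a\cos\theta(t),b\sin\theta(t))$ and study the sign changes of $\theta'(t)$, equivalently of the scalar $x_{162}y_{162}'-y_{162}x_{162}'$ suitably normalized, or even more simply of $x_{162}'(t)$ near the top EB vertex.

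The key geometric input is the symmetry of the configuration at $t=\pi/2$: there $P_1=(0,b)$ is the top vertex and the 3-periodic is an upright isosceles triangle, so by reflection symmetry across the $y$-axis the point $X_{162}$ is forced to the bottom vertex $(0,-b)$ (or, depending on orientation conventions, coincides with $P_1$); in either case its horizontal velocity component there is a single-variable function of $a/b$. I would compute $x_{162}'(t)\big|_{t=\pi/2}$ symbolically, clear denominators and the inevitable $\delta=\sqrt{a^4-a^2b^2+b^4}$ radical (rationalizing once), and expect the vanishing condition to reduce to a polynomial in $x=a/b$ proportional to $5x^8+3x^6-32x^4+52x^2-36$. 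A graphic/Descartes analysis then shows this octic (even in $x$, hence a quartic in $x^2$) has exactly one positive real root, namely $\alpha_{162}\simeq 1.1639$. Finally, a monotonicity/continuity argument — the velocity $\theta'(t)$ is positive for all $t$ when $a/b$ is close to $1$ (the circular case, where everything rotates rigidly), and the first zero of $\theta'$ appears precisely as $a/b$ crosses $\alpha_{162}$, after which $\theta'$ changes sign near the top vertex — upgrades the single vanishing instant at $a/b=\alpha_{162}$ into genuine non-monotonicity (four velocity reversals, two near each of the EB's top and bottom vertices) for all $a/b>\alpha_{162}$, exactly as in the $X_{88}$ and ACT-Intouchpoint cases.

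The main obstacle I anticipate is purely computational bookkeeping rather than conceptual: the trilinears of $X_{162}$ are considerably more intricate than those of $X_{88}$ (they involve the symmetric functions entering $X_{54}$, which themselves are built from the cosine-type quantities $b^2+c^2-a^2$ and their squares), so the raw expression for $x_{162}'(\pi/2)$ will be a large rational function in $a,b,\delta$ whose numerator must be massaged — via the relation $\delta^2=a^4-a^2b^2+b^4$ — into the stated octic without sign errors. A secondary, milder obstacle is justifying that the sign change of $\theta'$ genuinely persists (and produces four reversals, not two) for \emph{all} $a/b>\alpha_{162}$ rather than reverting at some larger threshold; this I would handle by checking that $5x^8+3x^6-32x^4+52x^2-36$ has no other positive root, so no further qualitative transition of $\theta'$ at the top/bottom vertices can occur, together with the observation that velocity reversals come in reflection-symmetric pairs about each axis of the EB. A Computer Algebra System discharges the algebra, exactly as the authors note for the analogous Poncelet-grid computation.
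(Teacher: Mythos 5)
Your proposal follows essentially the same route as the paper: parametrize $P_1(t)=(a\cos t,b\sin t)$, build $X_{162}(t)$ from its trilinears via the explicit $P_2,P_3$ of \cite{garcia2019-incenter}, observe that at $t=\pi/2$ the configuration is an upright isosceles with $X_{162}$ at the EB vertex $(0,b)$, and reduce $x_{162}'(\pi/2)=0$ to the octic $5x^8+3x^6-32x^4+52x^2-36$ whose unique positive root is $\alpha_{162}$. The only caveats are minor: $X_{162}$ is not the isogonal conjugate of $X_{54}$ (the paper just quotes its trilinears directly, which is what your computation actually uses), and your closing continuity argument upgrading the threshold to genuine non-monotonicity for all $a/b>\alpha_{162}$ is a welcome addition the paper leaves implicit.
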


\begin{proof}
The trilinear coordinates of $X_{162}$ are given by
{\small 
\[  \frac {1}{ \left( s_2^{2}-  s_3^{2} \right)  \left( s_2^{2}+ s_3^{2}-
s_1^{2} \right) } :  \frac {1}{ \left( s_3^{2}-  s_1^{2} \right)  \left( s_3^{2}+ s_1^{2} -
s_2^{2} \right) }:\frac {1}{ \left( s_1^{2}-  s_2^{2} \right)  \left( s_1^{2}+ s_3^{2} -
s_3^{2}\right) }\cdot
\]
}
Let a 3-periodic $P_1P_2P_3$ be parametrized by $P_1(t)=(a\cos t, b\sin t)$, with $P_2(t)$ and $P_3(t)$ computed explicitly as in \cite{garcia2019-incenter}.
Using the trilinear coordinates above, we have   $X_{162}(t)=(x_{126}(t), y_{126}(t))$ 
  At $t=\frac{\pi}{2}$, $P_1=(0,b)$ and   $X_{162}(\frac{\pi}{2})= (0,b)$.
  
  Solve $x_{162}'(t)|_{t=\frac{\pi}{2}}=0$ for $a/b$. After some long algebraic symbolic manipulation, this is equivalent to solving $5 x^8 + 3 x^6 - 32 x^4 + 52 x^2 - 36=0$, whose positive roots is $  \alpha_{162} \simeq 1.16369.$

\end{proof}

Since $\alpha_{88}>\alpha_{162}$, settting $a/b>\alpha_{88}$ implies both centers will move non-monotonically.

If the EB be a lake, their joint motion is a dance along its margins.
Over a complete revolution of $P_1(t)$ around the EB, $X_{88}$ and $X_{162}$ wind thrice around it, however:

\begin{proposition}
$X_{88}$ and $X_{162}$ never coincide, therefore their paths never cross each other.
\end{proposition}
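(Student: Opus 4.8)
The plan is to detach the claim from the billiard and prove something slightly stronger: no nondegenerate triangle other than the equilateral one has $X_{88}=X_{162}$. This suffices, because no $3$-periodic of a proper EB ($a>b$) is equilateral — an equilateral triangle is a billiard $3$-periodic only in a circle — and of course none is degenerate. As in the proof of Proposition~\ref{prop:x88}, two triangle centers coincide as points exactly when their trilinear triples are proportional, so writing $X_{88}=\frac{1}{s_2+s_3-2s_1}:\frac{1}{s_1+s_3-2s_2}:\frac{1}{s_1+s_2-2s_3}$ and $X_{162}=\frac{1}{(s_2^2-s_3^2)(s_2^2+s_3^2-s_1^2)}:\frac{1}{(s_3^2-s_1^2)(s_3^2+s_1^2-s_2^2)}:\frac{1}{(s_1^2-s_2^2)(s_1^2+s_2^2-s_3^2)}$, the coincidence condition, after clearing denominators, is the simultaneous vanishing of two polynomials $F(s_1,s_2,s_3)=G(s_1,s_2,s_3)=0$. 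I would then show this system has no solution in the open triangle region $\{\,|s_j-s_k|<s_i<s_j+s_k\,\}$ except $s_1=s_2=s_3$.

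First I would peel off the triangles at which one of the displayed denominators vanishes, i.e. the isosceles ($s_i=s_j$) and right-angled ($s_i^2+s_j^2=s_k^2$) ones. For such a triangle exactly one trilinear of $X_{162}$ has a pole while the other two stay finite and nonzero, so $X_{162}$ degenerates onto a vertex (the apex of the isosceles, or the right-angle vertex); meanwhile, by Proposition~\ref{prop:x88}, $X_{88}$ sits at a vertex only when some side equals the half-sum of the other two, and a short elementary check shows that this vertex is never the one occupied by $X_{162}$ unless the triangle is equilateral (for instance, in a $3{:}4{:}5$ triangle $X_{88}$ and $X_{162}$ both land on vertices, but on \emph{different} ones). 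So on all these special triangles $X_{88}\neq X_{162}$; this is also consistent with the fact that at the upright isosceles $3$-periodic ($t=\pi/2$) one has $X_{162}=(0,b)=P_1$ while $X_{88}=(0,-b)$.

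This leaves the scalene, non-right triangles, where every trilinear of both centers is finite and nonzero. Here I would run the elimination: regard $F,G$ as polynomials in $s_1$, compute the resultant $R(s_2,s_3)=\mathrm{Res}_{s_1}(F,G)$ with a CAS, and show its real zero set avoids the interior of the triangle region. To make the decisive positivity step transparent I would pass to the Ravi substitution $s_1=y+z$, $s_2=z+x$, $s_3=x+y$ with $x,y,z>0$, which turns the triangle region into the open positive octant and renders $F,G,R$ polynomials in $x,y,z$; after factoring out the unavoidable antisymmetric factors (of the form $(x-y)(y-z)(z-x)$, which cannot vanish in the scalene regime), I expect the surviving factor to carry an explicit positivity certificate — all-positive coefficients, or a sum of squares — so that it is strictly positive on the octant, contradicting $R=0$. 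An essentially equivalent route, closer to the paper's other computations, is to parametrize $P_1(t)=(a\cos t,b\sin t)$, evaluate $X_{88}(t)$ and $X_{162}(t)$ through the explicit vertex formulas of \cite{garcia2019-incenter}, and show that the trigonometric polynomial $|X_{88}(t)-X_{162}(t)|^2$ — a polynomial in $\cos t$ with coefficients in $a,b$ — has no root in $[-1,1]$, via a Sturm-sequence sign count valid for all $a>b>0$.

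The main obstacle is the scalene case: the resultant $R$ is of high degree because the trilinears of $X_{162}$ are quartic in the side lengths, so the elimination is heavy, and the real work is certifying that the surviving factor is sign-definite on the whole triangle region rather than merely checking a handful of sample triangles. Once coincidence is ruled out, the final assertion is immediate: two curves that both lie on the EB and share no point cannot cross.
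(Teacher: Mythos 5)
Your strategy is at heart the same as the paper's: reduce the claim to showing that the equilateral triangle is the only one with $X_{88}=X_{162}$, encode that coincidence as a polynomial system $F=G=0$, and finish by recalling that an equilateral $3$-periodic occurs only in a circular billiard. Where you differ is in the execution of the middle step, and your version is the more careful of the two. The paper works in the $(u,v)$-coordinates of a normalized triangle $(-1,0),(1,0),(u,v)$, obtains two degree-$17$ equations, and settles the matter by ``analytic and graphic analysis'' of the level curves $F=0$ and $G=0$ (its Figure~\ref{fig:x88x162}) --- essentially a picture. You instead (i) explicitly peel off the isosceles and right-angled triangles, where the trilinears of $X_{162}$ have poles and the center collapses onto a vertex, and correctly check via Proposition~\ref{prop:x88} that $X_{88}$ can never occupy that same vertex unless the triangle is equilateral --- a degenerate locus the paper never addresses, even though its own $F$ and $G$ are obtained by clearing exactly those denominators; and (ii) propose a certificate-based elimination (resultant, Ravi substitution, positivity/SOS) for the scalene case. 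That second step is the genuine crux, and you are right to flag it as the main obstacle: you have a blueprint rather than a completed computation, and the existence of a clean positivity certificate for the surviving factor is plausible but not guaranteed in advance (your Sturm-sequence alternative is also delicate, since it must be uniform in the two parameters $a>b$). But the paper's proof leaves precisely the same step resting on a graph, so your proposal is not weaker than the published argument --- it is the same reduction with a more defensible plan for closing it.
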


\begin{proof} Consider an elementary triangle $P_1=(-1,0)$, $P_2=(1,0)$ and $P_3=(u,v).$
Using the trilinear coordinates of $X_{88} $ and $X_{162}$, see Propositions \ref{prop:x88} and \ref{prop:x162}, and equation  \eqref{eqn:trilin-cartesian} compute the triangle centers $X_{88} $ and $X_{162}$.    The  equation $X_{88}=X_{162}$ is given by two algebraic equations $F(u,v,s_1,s_2)=G(u,v,s_1,s_2)=0$ of degree 17 with   $s_1=\sqrt{(u-1)^2+v^2}=\mid P_3-P_2\mid$ and $s_2=\sqrt{(u+1)^2+v^2}=\mid P_2-P_1\mid$.
Particular solutions of these equations are    equilateral triangles with $P_3=(0,\pm \sqrt{3}).$ 
Analytic and  graphic analysis reveals that the level curves $F=G=0$ are as shown in Figure~\ref{fig:x88x162}. 
\begin{figure}
    \centering
    \includegraphics[width=.65\textwidth]{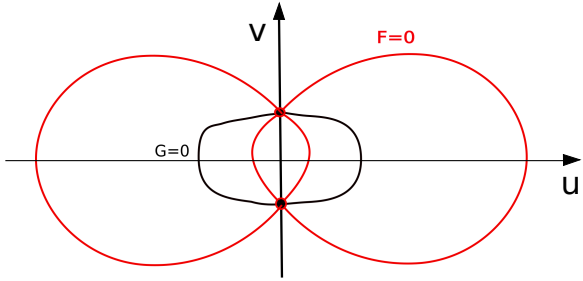}
    \caption{Level curves $F=0$ (red) and $G=0$ (black). }
    \label{fig:x88x162}
\end{figure}

Therefore the equilateral triangle is the only one such that    the triangle centers $X_{88}$ and $X_{162}$ are equal.  It is well known that an equilateral billiard orbit occurs only  when the billiard ellipse is a circle. This ends the proof.
\end{proof}

Their never-crossing joint motion as well as the instants when they come closest is illustrated in Figure~\ref{fig:flat-torus}.

\begin{figure}[H]
    \centering
    \includegraphics[width=\textwidth]{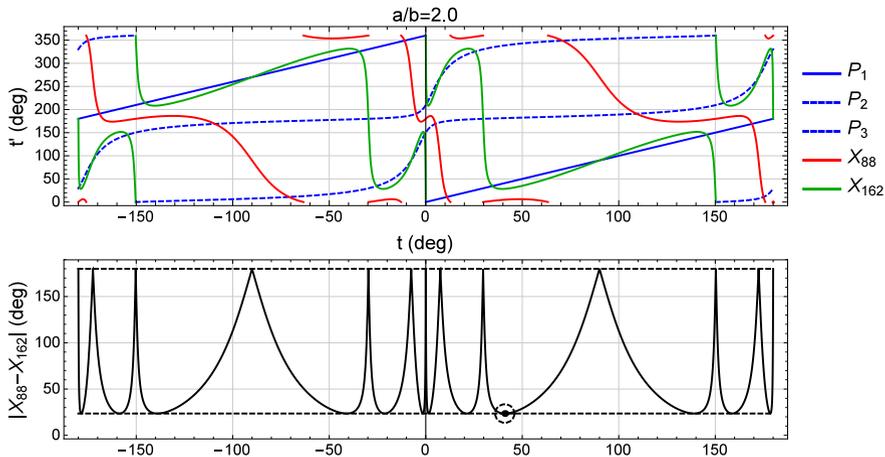}
    \caption{\textbf{Top}: location $t'$ of 3-periodic vertices $P_1$ (blue), $P_2$ (dashed blue), and $P_3$ (dashed blue), as well as $X_{88}$ (red), and $X_{162}$, plotted against the $t$ parameter of $P_1(t)$. \textbf{Bottom}: absolute parameter difference along the EB between $X_{88}$ and $X_{162}$. Notice there are 12 identical maxima at $180^\circ$ occurring when the two centers at the left and right vertices of the EB. Additionally, there are 12 identical minima whose values can be obtained numerically. The fact that the minimum is above zero implies the points never cross. Note the highlighted minimum at $t{\simeq}41^\circ$: it is referred to in Figure~\ref{fig:3d-torus}.}
    \label{fig:flat-torus}
\end{figure}

These paths can also be viewed as non-intersecting loops on the torus shown in Figure~\ref{fig:3d-torus}. A part of the harmonious ballet of $X_{88}$ and $X_{162}$ is depticted in Figure~\ref{fig:x88-x162} with each center imagined a swan.

\begin{figure}
    \centering
    \includegraphics[width=.8\textwidth]{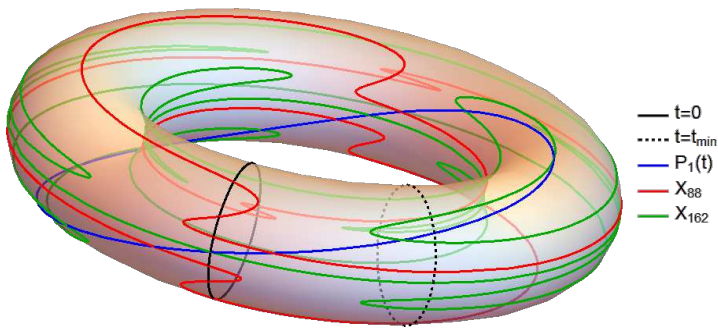}
    \caption{Visualizing the joint motion of $P_1(t),X_{88},X_{162}$ on the surface of a torus. The meridians (circles around the smaller radius) correspond to a given $t$ (a solid black meridian is wound at $t=0$. The parallels represent a fixed location on the Billiard boundary. The curves for $X_{88}$ and $X_{162}$ are thrice-winding along the torus though never intersecting. To be determined: an analytic value for $t$ where $X_{88}$ and $X_{162}$ are closest (there are 12 solutions). The dashed meridian represents one such minimum which for $a/b=2$ occurs at $t{\simeq}41^\circ$. Notice it does not coincide with any critical points of motion.}
    \label{fig:3d-torus}
\end{figure}

\begin{figure}
    \centering
    \includegraphics[width=\linewidth]{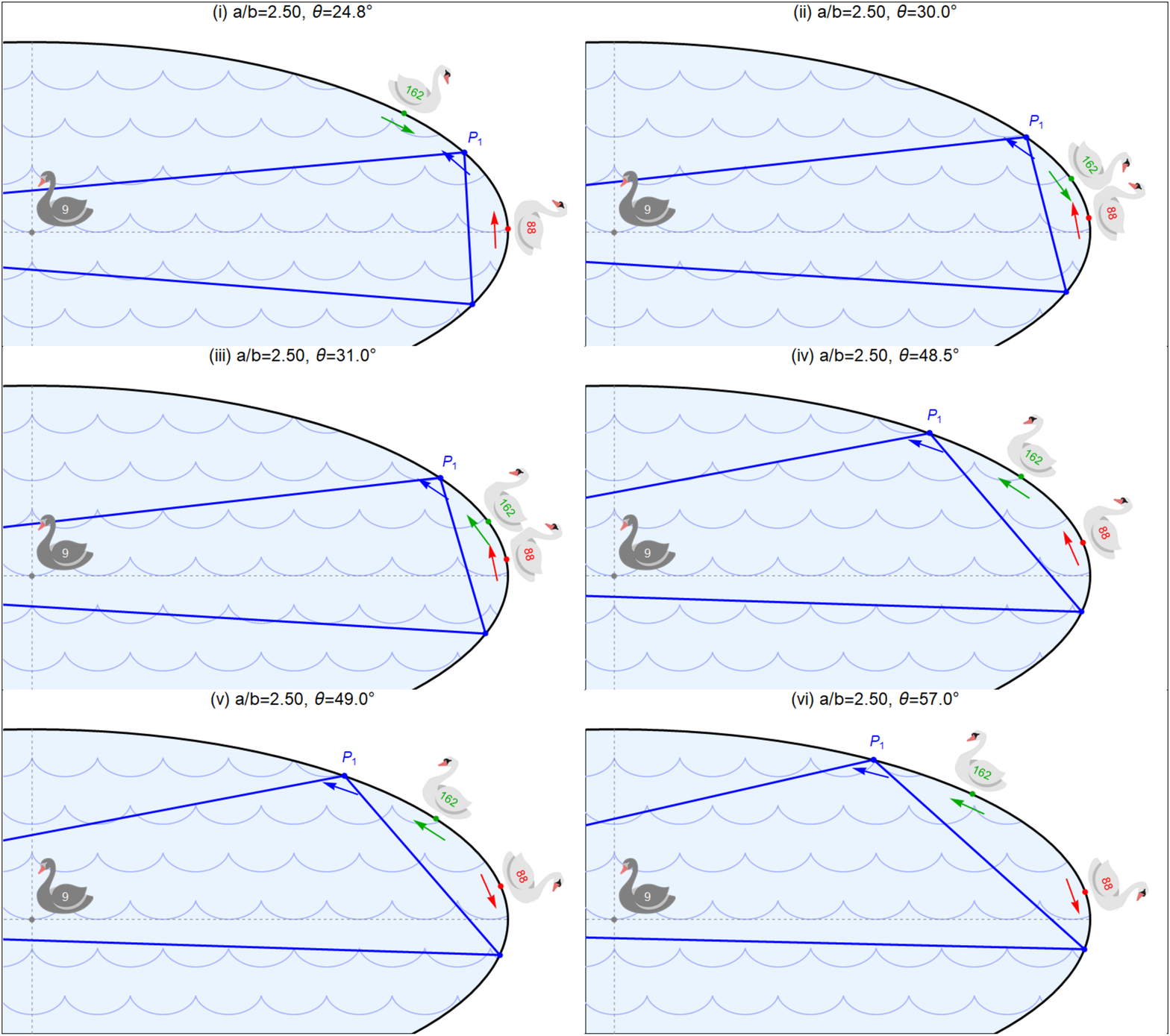}
    \caption{Triangle Center Ballet along the margins of an $a/b=2.5$ Elliptic Lake. (i) while $P_1$ moves CCW, Triangle Swanters $X_{88}$ and $X_{162}$ glide toward each other; (ii) at their closest they touch bills. (iii) Suddenly, $X_{162}$ reverses course, (iv) and a short-lived same-direction pursuit ensues. (v) An unswooned $X_{88}$ also changes course, (vi) and now both glide in opposite directions. The duo will meet again on 2nd, 3rd and 4th quadrants, where the dance steps are played back in alternating forward and backward order. Unfazed, a black {\em Mittenschwan} floats still at the center of the Lake. With some help from Tchaikovsky, \textbf{Video}: \cite[PL\#14]{reznik2020-playlist-intriguing}}
    \label{fig:x88-x162}
\end{figure}

Though we lack a theory for non-monotonicity of EB-bound TCs $X_i$, we think it is ruled by at least the following aspects:

\begin{itemize}
    \item When the 3-periodic is an isosceles, will $X_i$ lie at the summit vertex or below the base?
    \item As the 3-periodic rotates about the EB, does $X_i$ follow it or move counter to it?
    \item Can $X_i$ be non-monotonic? For example, neither $X_{100}$ nor $X_{190}$ ever are. If so, what is the aspect ratio $\alpha_i$ which triggers it?
\end{itemize}

Still murky is how the above derive from the Trilinears which specify $X_i$. We refer the reader to an animation depicting the joint motion of 20-odd such centers \cite[PL\#15]{reznik2020-playlist-intriguing}.

\subsection{Summary of Phenomena}
\label{sec:inter-summary}

Tables~\ref{tab:loci-phenomena} and \ref{tab:loci-ab} respectively summarize this Section's various loci phenomena, and notable $a/b$ thresholds.

\begin{table}[H]
\scriptsize
\begin{tabular}{|r|l|c|l|}
\hline
Point & Type of Locus & Non-Monotonic & Comments \\
\hline
$X_{11}$ & Caustic & -- & reverse dir. \\
Extouchpts. & Caustic & -- & forward dir. \\
$X_4$ & Upright Ellipse & -- & Inside EB when $a<\alpha_4$ \\
Orthic $X_1$ & Can be 4-piece Ell. & -- &
\makecell[tl]{If $a<\alpha_4$ is $X_4$ locus,\\else 4-piecewise ellipse:\\\;\;2 arcs from $X_4$ locus,\\\;\;2 arcs from EB} \\

$X_{26}$ & Can be non-compact & -- & \makecell[tl]{When $a/b{\geq}\alpha_4$, goes to $\infty$\\if 3-periodic is right triangle} \\
$X_{40}$ & Upright Ellipse & -- & At $a/b=\varphi$ identical to EB \\
$X_{59}$ & At Least Sextic & -- & 4 Self-Intersections \\
$X_{88}$ & EB & $a/b>\alpha_{88}$ & \makecell[tl]{at $a/b=\alpha_{88}^\perp>\alpha_4$\\contains 3:4:5 triangle}\\
$X_{162}$ & EB & $a/b>\alpha_{162}$ & never crosses $X_{88}$ \\
ACT Intouchpts. & EB Boundary & $a/b>\alpha_{act}$ & \\
\hline
\end{tabular}
\caption{Loci phenomena for various Triangle Centers.
\label{tab:loci-phenomena}}\end{table}

\begin{table}[H]
\scriptsize
\begin{tabular}{|c|l|c|l|}
\hline
symbol & $a/b$ & Degree & Significance \\
\hline
$\alpha_{162}$ & $1.164$ & 8 & above it, motion of $X_{162}$ is non-monotonic  \\ 
$\alpha_h$ & $1.174$ & 8 & \makecell[tl]{above it, some 3-periodic Orthics \\ can be obtuse.} \\ 
$\alpha_{act}$ & $1.265$ & 2 & \makecell[tl]{above it, motion of ACT\\Intouchpoints is non-monotonic} \\
$\alpha_4$ & $1.352$ & 4 & \makecell[tl]{locus of $X_4$ is tangent to EB.\\above it, some 3-periodics are obtuse} \\
$\alpha_{88}^{\perp}$ & $1.392$ & 4 & with $X_{88}$ on a vertex, sidelengths are $3:4:5$ \\
$\alpha_{88}$ & $1.486$ & 4 & above it, motion of $X_{88}$ is non-monotonic \\
$\alpha_4^{*}$ & $1.510$ & 6 & $X_4$ locus identical to rotated EB \\
$\alpha_{59}^{\perp}$ & $1.580$ & -- & \makecell[tl]{when $X_{59}$ is at self-intersection,\\3-periodic is right triangle}\\
$\varphi$ & $1.618$ & 2 & $X_{40}$ locus identical to rotated EB\\
\hline
\end{tabular}

\caption{Aspect ratio thresholds, the degree of the polynomial used to find them, and their effects on loci phenomena. \label{tab:loci-ab}}
\end{table}

\section{Conclusion}
\label{sec:conclusion}
Examining the mysterious, beautiful 3-periodics under the lens of Classical Triangle Geometry has yielded many a cygnet. We think the EB's chant is far from over, surely many delightful secrets still hide under its plumage.

Still, our haphazard experimental process could use some theoretical teeth. As next steps, we submit the following questions to the reader:

\begin{itemize}
    \item Can a Triangle Center be found such that its locus can intersect a straight line more than 6 times?
    \item The questions about $X_{59}$ in Section~\ref{sec:x59}.
    \item What causes a Triangle center to move monotonically (or not), forward or backward, with respect to the the monotonic motion of 3-periodic vertices?
    \item What is $t$ in $P_1(t)$ at which point $X_{88}$ or $X_{162}$ reverses motion? When do they come closest?
    \item What can be said about the joint motion of other pairs in the 50+ list of EB-bound points provided in \cite[X(9)]{etc}? Which are monotonic, which are not, is there a Pavlova or Baryshnikov amongst them?
    \item With \cite{reznik2019-applet} one observes $X_{823}$ reverses direction at the exact moment a 3-periodic vertex crosses it. Can this be proven?
\end{itemize}

Videos mentioned herein are on a \href{https://bit.ly/2vvJ9hW}{playlist} \cite{reznik2020-playlist-intriguing}, with links provided on Table~\ref{tab:playlist}. The reader is especially encouraged to interact with the phenomena above using our online applet \cite{reznik2019-applet}. A gallery of loci generated by $X_1$ to $X_{100}$ (as well as vertices of derived triangles) is provided in \cite{reznik2019-loci-gallery-all-derived-tris}. 

\begin{table}[H]
\small
\begin{tabular}{|c|l|l|}
\hline
\href{https://bit.ly/2vvJ9hW}{PL\#} & Video Title & Section\\
\hline
\href{https://youtu.be/tMrBqfRBYik}{01} &
{$X_9$ stationary at EB center} & \ref{sec:intro} \\
\href{https://youtu.be/sMcNzcYaqtg}{02} &
{Loci for $X_1\ldots{X_5}$ are ellipses} &
\ref{sec:intro} \\
\href{https://youtu.be/Xxr1DUo19_w}{03} &
{Elliptic locus of Excenters similar to rotated $X_1$} &
\ref{sec:intro} \\
\href{https://youtu.be/TXdg7tUl8lc}{04} &
{Loci of $X_{11}$, $X_{100}$ and Extouchpoints are the EB} &\ref{sec:intro} \\

\href{https://youtu.be/OGvCQbYqJyI}{05} &
{Non-Ell. Loci of Medial, Intouch and Feuerbach Vertices} & \ref{sec:intro} \\

\href{https://youtu.be/-bLuvICzmqM}{06} &
{Pinning of Incenter of Orthic to Obtuse Vertex} & \ref{sec:x4} \\

\href{https://youtu.be/3qJnwpFkUFQ}{07} &
{Locus of Orthic Incenter is 4-piece ellipse} &
\ref{sec:orthic-incenter} \\

\href{https://youtu.be/HY577AZVi7I}{08} &
Locus of $X_4$, Orthic $X_1,X_4$, and Orthic's Orthic $X_1$ &
\ref{sec:orthic-incenter} \\

\href{https://youtu.be/50dyxWJhfN4}{09} &
{Non-monotonic motion on the EB of Anticompl. Intouchpoints} &
\ref{sec:act} \\
\href{https://youtu.be/DaoNJRcf-0E}{10} &
{Locus of $X_{88}$ is on the EB and can be non-monotonic} &
\ref{sec:x88} \\
\href{https://youtu.be/nJLp--JjDZU}{11} &
{Non-monotonic motion of $X_{88}$ and $X_1X_{100}$ envelope} &
\ref{sec:x88} \\
\href{https://youtu.be/pl_PqSuhlx0}{12} &
{Locus of $X_{59}$ with 4 self-intersections} &
\ref{sec:x59} \\
\href{https://youtu.be/rg28gGr-Qeo}{13} &
{Locus of $X_{40}$ and the Golden Billiard} & \ref{sec:x40}\\
\href{https://youtu.be/ljGTtA1x-Sk}{14} &
{Swan Lake: the dance of $X_{88}$ and $X_{162}$} &
\ref{sec:swan-lake}  \\
\href{https://youtu.be/JdcJt5PExsw}{15} &
{Peter Moses' Points on the EB} &
\ref{sec:swan-lake} \\
\hline
\end{tabular}
\caption{Videos mentioned in the paper. Column ``PL\#'' indicates the entry within the \href{https://bit.ly/2vvJ9hW}{playlist} \cite{reznik2020-playlist-intriguing}.}
\label{tab:playlist}
\end{table}

\section*{Acknowledgements}
Warm thanks go out to Clark Kimberling, Peter Moses, Arseniy Akopyan, Ethan Cotterill, and Mark Helman for their generous help.

\bibliographystyle{spmpsci} 
\bibliography{elliptic_billiards_v3,authors_rgk_v1}

\appendix
\section{Triangles: Constructions for Centers}
\label{app:trilinears}
Constructions for a few basic Triangle Centers appear in Figure~\ref{fig:constructions}.

\begin{figure}
    \centering
    \includegraphics[width=\textwidth]{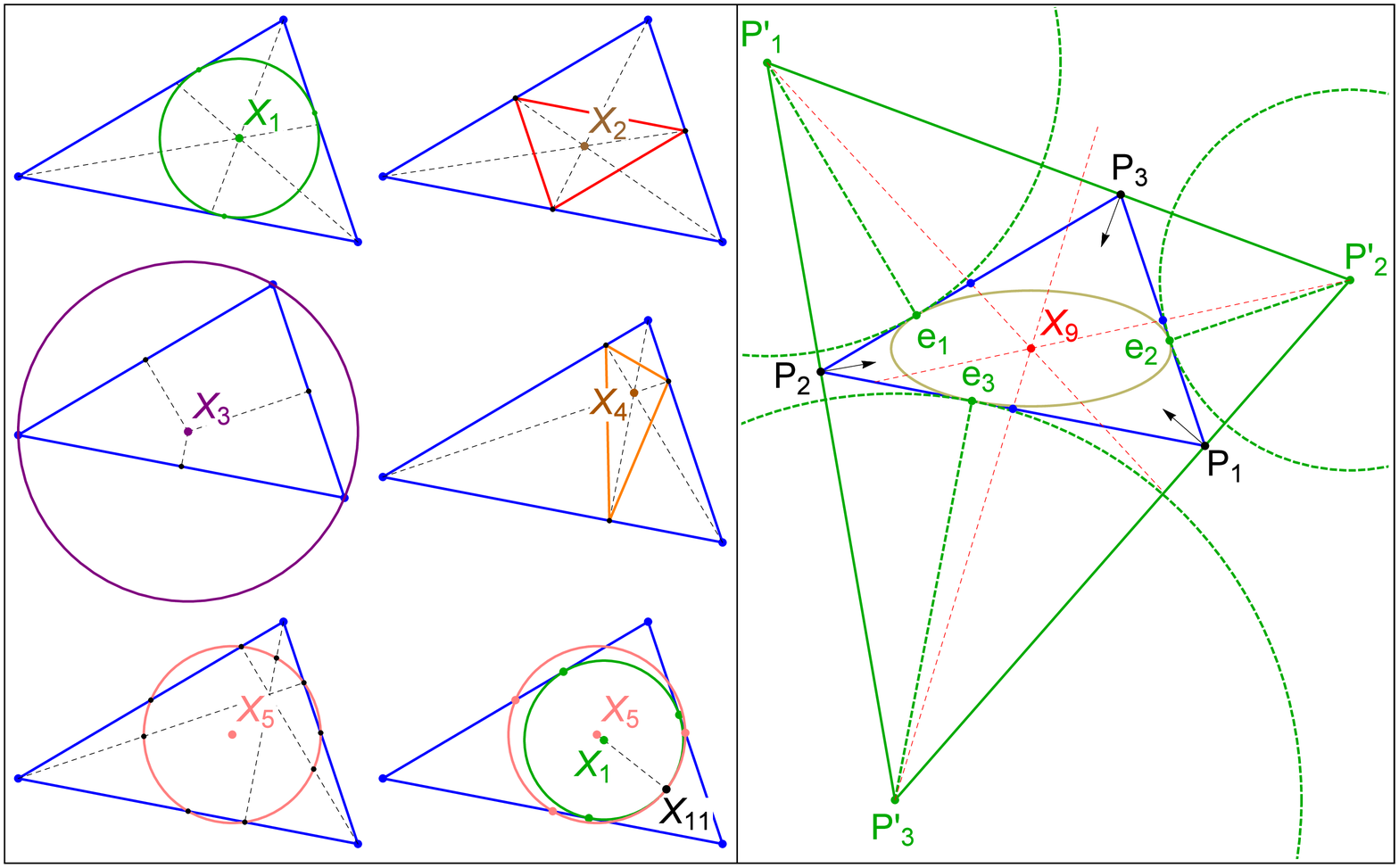}
    \caption{The construction of Basic Triangle Centers $X_i$, as listed in \cite{etc}. \textbf{Left}: The Incenter $X_1$ is the intersection of angular bisectors, and center of the Incircle (green), a circle tangent to the sides at three {\em Intouchpoints} (green dots), its radius is the {\em Inradius} $r$. The Barycenter $X_2$ is where lines drawn from the vertices to opposite sides' midpoints meet. Side midpoints define the {\em Medial Triangle} (red). The Circumcenter $X_3$ is the intersection of perpendicular bisectors, the center of the {\em Circumcircle} (purple) whose radius is the {\em Circumradius} $R$. The Orthocenter $X_4$ is where altitudes concur. Their feet define the {\em Orthic Triangle} (orange). $X_5$ is the center of the 9-Point (or Euler) Circle (pink): it passes through each side's midpoint, altitude feet, and Euler Points \cite{mw}. The Feuerbach Point $X_{11}$ is the single point of contact between the Incircle and the 9-Point Circle. \textbf{Right}: given a reference triangle $P_1P_2P_3$ (blue), the {\em Excenters} $P_1'P_2'P_3'$ are pairwise intersections of lines through the $P_i$ and perpendicular to the bisectors. This triad defines the {\em Excentral Triangle} (green). The {\em Excircles} (dashed green) are centered on the Excenters and are touch each side at an {\em Extouch Point} $e_i,i=1,2,3$. Lines drawn from each Excenter through sides' midpoints (dashed red) concur at the {\em Mittenpunkt} $X_9$. Also shown (brown) is the triangle's {\em Mandart Inellipse}, internally tangent to each side at the $e_i$, and centered on $X_9$. This is identical to the $N=3$ Caustic.}
    \label{fig:constructions}
\end{figure}

Any point on the plane of a triangle $T=P_1P_2P_3$ can be defined by specifying a triple of {\em Trilinear Coordinates} $x:y:z$ which are proportional to the signed distances from $P$ to each side, which makes them invariant under similarity, and reflection transformations.

A {\em Triangle Center} (with respect to a triangle $T={P_1}{P_2}{P_3}$) is defined by Trilinear Coordinates obtained by thrice applying a {\em Triangle Center Function} $h$ to the sidelengths as follows:
 
\begin{equation}
\label{eqn:ftrilins}
    x:y:z {\iff} h(s_1,s_2,s_3):h(s_2,s_3,s_1):h(s_3,s_1,s_2)
\end{equation}

$h$ must (i) be {\em bi-symmetric}, i.e., $h(s_1,s_2,s_3)=h(s_1,s_3,s_2)$, and (ii) homogeneous, $h(t s_1, t s_2, t s_3)=t^n h(s_1,s_2,s_3)$ for some $n$ \cite{kimberling1993_rocky}. Trilinears for nearly 40k Triangle centers are available in \cite{etc}. Trilinears can be converted to Cartesians using \cite{mw}:
  
\begin{equation}
\label{eqn:trilin-cartesian}
X_i|_{\text{cartesian}}=\frac{s_1 x P_1 + s_2 y P_2 + s_3 z P_3}{D}
\end{equation}

Where and $D={s_1}x+{s_2}y+{s_3}z$.

\section{Obtuse Triangles and their Orthics}
\label{app:orthic-incenter}
Let $T=ABC$ be any triangle and $T'=A'B'C'$ its Orthic, Figure~\ref{fig:orthic-incenter}. Let $X'_1$ be the orthic's Incenter. Referring to Figure~\ref{fig:orthic-incenter}.

\begin{lemma}
If $T$ is acute, the Incenter $X'_1$ of $T'$ is the Orthocenter $X_4$ of $T$.
\label{lem:fagnano}
\end{lemma}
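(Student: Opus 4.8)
The statement to prove is Lemma~\ref{lem:fagnano}: for an acute triangle $T=ABC$, the Incenter of its Orthic Triangle $T'=A'B'C'$ coincides with the Orthocenter $X_4$ of $T$. The cleanest route is the classical reflection/angle-chasing argument rather than coordinates. First I would establish the key concyclicity facts: for any triangle, the feet of two altitudes together with the two vertices they are not dropped from lie on a circle (e.g.\ $B,C,B',C'$ lie on the circle with diameter $BC$, since $\angle BB'C=\angle BC'C=90^\circ$). From these four cyclic quadrilaterals one reads off that $\angle AB'C' = \angle ABC$ and $\angle AC'B' = \angle ACB$, and more to the point that the altitude $AA'$ bisects the angle $\angle B'A'C'$ of the orthic triangle.

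**Key steps, in order.** (1) Show $\angle A'B'C = \angle ABC$ hmm — more precisely, using the circle on diameter $BC$ through $B',C'$ and the circle on diameter $AC$ through $A',C'$, I would chase angles to get $\angle(B'A', A'C) = \angle(A'C', \ldots)$ and ultimately that $\angle AA'B' = \angle AA'C'$; that is, line $AA'$ bisects the internal angle of $T'$ at $A'$. The standard phrasing: the sides $A'B'$ and $A'C'$ make equal angles with $BC$, because reflecting side $A'C'$ in line $BC$ carries it onto line $A'B'$ (this is exactly the billiard/Fagnano reflection property). (2) By the identical argument at the other two feet, $BB'$ bisects $\angle A'B'C'$ and $CC'$ bisects $\angle B'A'C'$ wait — $\angle A'C'B'$. (3) The three altitudes $AA', BB', CC'$ of $T$ are concurrent at $X_4$; by (1)–(2) they are simultaneously the three internal angle bisectors of $T'$, so their common point $X_4$ is the incenter of $T'$. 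One must note, for acute $T$, that the feet $A',B',C'$ lie strictly on the open sides of $T$ and $X_4$ lies inside $T$ (hence inside $T'$), so these are genuinely the \emph{internal} bisectors and $X_4$ is the incenter, not an excenter — this is where acuteness is used, and it is precisely the hypothesis that fails in the obtuse case discussed in the surrounding text.

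**Main obstacle.** The only real subtlety is the angle-chase in step~(1)—getting the equal-angle (reflection) property cleanly with correctly oriented angles, since several cyclic quadrilaterals are involved and sign/configuration errors are easy. I would organize it by fixing the circle with diameter $BC$: $B'$ and $C'$ lie on it, so $\angle BC'A' = \angle BCA'$ \ldots actually the crispest version is: in the circle on diameter $AB$, points $A',B'$ lie on it, giving $\angle B'A'A = \angle B'BA$ (angles subtending arc $AB'$); and in the circle on diameter $AC$, points $A',C'$ lie on it, giving $\angle C'A'A = \angle C'CA$; then $\angle B'BA = 90^\circ - \angle A$ and $\angle C'CA = 90^\circ - \angle A$ from the right triangles $ABB'$ and $ACC'$, so $\angle B'A'A = \angle C'A'A$ and $AA'$ bisects $\angle B'A'C'$. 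Once this template is set, steps (2) and (3) are immediate repetitions, and the acuteness bookkeeping in (3) is routine. Everything else is standard and needs no computation.
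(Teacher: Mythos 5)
Your proposal is correct, and it rests on exactly the same pivot as the paper's proof: the altitudes $AA'$, $BB'$, $CC'$ of $T$ are the internal angle bisectors of the orthic triangle $T'$, and since they concur at $X_4$ (interior to $T$ when $T$ is acute), that point is the incenter of $T'$. The difference is in how that pivot is justified. The paper simply invokes Fagnano's Problem --- the orthic triangle is the inscribed triangle of minimum perimeter, and the reflection (billiard) property of the minimizer forces the altitudes to bisect its angles --- citing \cite[Section 3.3]{rozikov2018-billiards}. You instead prove the bisector property from scratch by the classical angle chase: $A',B'$ lie on the circle with diameter $AB$ and $A',C'$ on the circle with diameter $AC$, whence $\angle B'A'A=\angle B'BA=90^\circ-\angle A=\angle C'CA=\angle C'A'A$. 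Your route is more elementary and self-contained (no appeal to a variational characterization of the orthic triangle), and it has the virtue of making explicit where acuteness is used --- the feet lie on the open sides and $X_4$ is interior, so the bisectors are internal and one gets the incenter rather than an excenter; the paper leaves this configuration bookkeeping implicit, though it is precisely the point that fails in the obtuse case treated in Lemma~\ref{lem:pinned}. The paper's citation-based argument is shorter and ties the lemma to the billiard theme of the article, but proves nothing you could not also extract from your chase. The only blemishes in your write-up are the mid-stream self-corrections (the mislabelled angles in steps (1) and (2)), which should be cleaned up, but the final form of each step is right.
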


\begin{proof}
This is Fagnano's Problem, i.e., the Orthic is the inscribed triangle of minimum perimeter, and the altitudes of $T$ are its bisectors \cite[Section 3.3]{rozikov2018-billiards}. Since the altitudes or $T$ are bisectors of $T'$ this completes the proof.
\end{proof}

\begin{lemma}
If $T$ is obtuse, the Incenter $X'_1$ of $T'$ is the vertex of $T$ subtending the obtuse angle.
\label{lem:pinned}
\end{lemma}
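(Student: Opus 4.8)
The plan is to reduce the obtuse case to the acute case already handled in Lemma~\ref{lem:fagnano}. Suppose $T = ABC$ is obtuse at $B$. The key geometric observation is that the three feet of the altitudes of $T$ coincide with the three feet of the altitudes of the \emph{acute} triangle $T_e = A X_4 C$, where $X_4$ is the orthocenter of $T$: indeed, the altitude from $A$ in $T$ lies along line $A X_4$, which is a cevian of $T_e$ through vertex $A$; and because $X_4$ is the orthocenter of $T$, the lines $A X_4$, $C X_4$, and $B X_4$ are mutually the altitude directions, so each of them meets the opposite side of $T_e$ perpendicularly. Hence $T$ and $T_e$ share the same orthic triangle $T' = A'B'C'$.

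First I would verify that $T_e = A X_4 C$ is acute. This follows because in the obtuse configuration at $B$, the orthocenter $X_4$ lies on the opposite side of line $AC$ from $B$, and one checks directly (or by the standard fact that the orthocenter of $T$ and the three vertices form an orthocentric system, all four of whose triangles have the same orthic triangle, with exactly one of the four being acute when the original is obtuse) that the triangle $A X_4 C$ has all angles acute. Then I would apply Lemma~\ref{lem:fagnano} to $T_e$: its orthic triangle $T'$ is the Fagnano triangle of $T_e$, and the \emph{altitudes} of $T_e$ are the \emph{bisectors} of $T'$. But the altitude of $T_e$ from the vertex $X_4$ is the line $X_4 B$ (since $B$, $X_4$ lie on a common altitude direction of the orthocentric system), and this altitude passes through $B$; similarly the altitudes of $T_e$ from $A$ and from $C$ are the lines $AB$ and $CB$, which also pass through $B$. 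Therefore all three angle bisectors of $T'$ concur at $B$, i.e.\ the incenter $X_1'$ of $T'$ is exactly $B$, the vertex of $T$ subtending the obtuse angle.

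The main obstacle I anticipate is establishing cleanly and without sign ambiguity that the orthic triangle of the obtuse $T$ genuinely equals the orthic triangle of the acute $T_e$, and correctly identifying \emph{which} of the three altitude-lines of $T_e$ is which side-line of $T$ — the bookkeeping of the orthocentric system (which vertex of $T_e$ plays the role of each Fagnano-bisector apex) is where care is needed, since two of the orthic vertices lie outside $T$ when $T$ is obtuse. Once the orthocentric-system identity $\{\,$orthic of $T\,\} = \{\,$orthic of $T_e\,\}$ is pinned down, the conclusion is immediate from Fagnano (Lemma~\ref{lem:fagnano}). An alternative, more computational route would be to place $T$ in coordinates, write down $A', B', C'$ explicitly as feet of altitudes, compute the incenter of $T'$ via the weighted-vertex formula, and simplify to $B$ under the hypothesis that the angle at $B$ exceeds $\pi/2$; but the synthetic argument above is shorter and is the one I would present.
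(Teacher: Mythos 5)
Your proposal is correct and follows essentially the same route as the paper: both arguments pass to the acute triangle $T_e = A X_4 C$ of the orthocentric system, observe that $T$ and $T_e$ share the orthic $T'$ with $B$ the orthocenter of $T_e$, and then invoke Lemma~\ref{lem:fagnano} to conclude that the bisectors of $T'$ concur at $B$. Your explicit check that $T_e$ is acute is a point the paper defers to a subsequent corollary, but the argument is the same.
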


\begin{proof}
Let $B$ be the obtuse angle. Then $B'$ will be on the longest side of $T$ whereas $A'$ (resp. $C'$) will lie on extensions of $BC$ (resp. $AB$), i.e., $A'$ and $C'$ are exterior to $T$. Therefore, $X_4$ will be where altitudes $AA'$ and $CC'$ meet, also exterior to $T$. Since $AA'{\perp}CB$ and $CC'{\perp}AB$, then $CA'$ and $AC'$ are altitudes of triangle $T_e=A{X_4}C$. Since these meet at $B$, the latter is the Orthocenter of $T_e$ and $T'=A'B'C'$ is its Orthic\footnote{The pre-image of $T'$ comprises both $T$ and $T_e$.}. By Lemma~\ref{lem:fagnano}, lines $CA',AC',{X_4}B'$ are bisectors of $T'$, therefore their meetpoint $B$ is the Incenter of $T'$. 
\end{proof}

\begin{corollary}
When $T$ is obtuse, $T_e = A{X_4}C$ is acute and the Excentral Triangle of $T'$.
\end{corollary}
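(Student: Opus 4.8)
The plan is to reuse the geometric picture already built in the proof of Lemma~\ref{lem:pinned}, and then invoke Fagnano (Lemma~\ref{lem:fagnano}) in the reverse direction. Say $T=ABC$ is obtuse at $B$. From the proof of Lemma~\ref{lem:pinned} we already know three facts about $T_e=AX_4C$: (a) $T'=A'B'C'$ is its Orthic triangle, (b) $B$ is its Orthocenter, and (c) the lines $CA'$, $AC'$, $X_4B'$ — i.e. the altitudes of $T_e$ — are the internal bisectors of $T'$, so $B$ is the Incenter of $T'$.

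\emph{Step 1: $T_e$ is acute.} First note $T_e$ is a genuine (nondegenerate) triangle: $A,X_4,C$ are not collinear since $T'$, its Orthic, is a genuine triangle (for $T$ obtuse at $B$, the feet $A',C'$ lie strictly outside $T$ on the extensions of $CB$, $AB$, and $B'$ lies strictly between $A$ and $C$, so $A',B',C'$ are not collinear). Now apply the Orthocenter/Orthic dichotomy to $T_e$: if $T_e$ were obtuse its Orthocenter would be one of its own vertices (Lemma~\ref{lem:pinned}), and if $T_e$ were right its Orthocenter would be the right-angle vertex; in either case the Orthocenter of $T_e$ would be an element of $\{A,X_4,C\}$. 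But the Orthocenter of $T_e$ is $B$, and $B\neq A$, $B\neq C$ (they are distinct vertices of $T$) and $B\neq X_4$ (recall $X_4$ is exterior to $T$ while $B$ is a vertex of $T$). Hence $T_e$ is acute.

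\emph{Step 2: $T_e$ is the Excentral triangle of $T'$.} Since $T_e$ is acute with Orthic $T'$, Lemma~\ref{lem:fagnano} says its altitudes are the internal bisectors of $T'$. Each side of $T_e$ is perpendicular to the altitude issuing from the opposite vertex, hence perpendicular to the corresponding internal bisector of $T'$ at the vertex of $T'$ lying on that side; a line through a vertex of $T'$ perpendicular to the internal bisector there is exactly the external bisector at that vertex. Concretely, $T_e$ and $T$ share the side $AC$, on which $B'$ lies, so $AC$ is the external bisector of $T'$ at $B'$; likewise side $AX_4$ (through $C'$) is the external bisector at $C'$, and side $CX_4$ (through $A'$) is the external bisector at $A'$. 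Therefore each vertex of $T_e$ is the intersection of two external bisectors of $T'$, i.e. an Excenter of $T'$: $A=AC\cap AX_4$ is the Excenter opposite $A'$, $C=AC\cap CX_4$ the one opposite $C'$, and $X_4=AX_4\cap CX_4$ the one opposite $B'$. Thus $T_e=\{A,X_4,C\}$ is precisely the Excentral triangle of $T'$.

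The routine checks are the configuration claims (which feet lie inside versus outside $T$, giving nondegeneracy), which are immediate from $T$ being obtuse at $B$. The only step needing a careful classical justification is the ``perpendicular to the internal bisector $=$ external bisector'' identification of the sides of an acute triangle with the external bisectors of its Orthic — essentially the reflection (billiard) property of the Orthic triangle; I would state it with a one-line angle-chase or cite \cite[Chapter 1]{coxeter67}. Everything else is bookkeeping on top of Lemmas~\ref{lem:fagnano} and \ref{lem:pinned}.
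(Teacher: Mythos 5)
Your Step 2 is sound and is essentially the paper's own argument: the paper likewise observes that the sides of $T_e$ pass through the vertices of $T'$ perpendicularly to the internal bisectors of $T'$, which is precisely the construction of the Excentral Triangle (lines through each vertex perpendicular to the bisectors, i.e.\ the external bisectors). Your bookkeeping identifying each vertex of $T_e$ with the intersection of two external bisectors is correct.

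Step 1, however, contains a false statement where you exclude the obtuse case. You write that ``if $T_e$ were obtuse its Orthocenter would be one of its own vertices (Lemma~\ref{lem:pinned})''. That is not what Lemma~\ref{lem:pinned} says, and it is not true: the Orthocenter of an obtuse triangle lies strictly \emph{exterior} to it (the paper itself uses this in the proof of Theorem~1), so showing $B\notin\{A,X_4,C\}$ does not rule out $T_e$ being obtuse --- it only rules out the right-angle case. The statement you want from Lemma~\ref{lem:pinned} concerns the \emph{Incenter of the Orthic}: if $T_e$ were obtuse, then the Incenter of its Orthic --- which by your fact (a) is $T'$ --- would be pinned to the obtuse vertex of $T_e$, hence would lie in $\{A,X_4,C\}$; but by your fact (c) that Incenter is $B$, a contradiction. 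With ``Orthocenter'' replaced by ``Incenter of the Orthic'' in the obtuse branch, your dichotomy argument goes through and Step~1 is repaired. For comparison, the paper disposes of acuteness in one line by a different observation: all three vertices of $T'$ lie on the closed sides (segments) of $T_e$, which happens only for an acute triangle, placing $T_e$ in the situation of Lemma~\ref{lem:fagnano} directly.
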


Since all vertices of $T'$ lie on the sides of $T_e$, this is the situation of Lemma~\ref{lem:fagnano}, i.e., $T_e$ is acute. Notice the sides of $T_e$ graze each vertex of $T'$ perpendicular to the bisectors, which is the construction of the Excentral Triangle.

Let $Q$ be a generic triangle and $Q_e$ its Excentral \cite{mw}. Let $\theta_i$ be angles of $Q$ and $\phi_i$ those of $Q_e$ opposite to the $\theta_i$'s. By inspection, $\phi_i=\frac{\pi-\theta_i}{2}$, i.e., all excentral angles are less than $\pi/2$.

\begin{corollary}
If $T$ is obtuse, $X_4$ is an Excenter of $T'$ \cite{coxeter67}.
\end{corollary}

$X_4$ is the intermediate vertex of $T_e$.


\end{document}